 \tikzset{
 	font={\fontsize{11pt}{12}\selectfont}}
 \newtheorem{theorem}{Theorem}
 \newtheorem{lemma}{Lemma}
 \newtheorem{corollary}{Corollary}
 \theoremstyle{definition}
 \theoremstyle{remark}
 \newtheorem{remark}{Remark}
 \newcommand{\PP}[1]{\textnormal{Pr}\!\left\{{#1}\right\}} 
 \newcommand{\EE}[1]{\mathbb{E}\left[{#1}\right]} 
 \newcommand{\Ep}[2]{\mathbb{E}_{#1}\left[{#2}\right]}
 \newcommand{\EEst}[2]{\mathbb{E}\left[{#1}\ \middle| \ {#2}\right]} 
 \newcommand{\PPst}[2]{\text{Pr}\!\left\{{#1}\ \middle| \ {#2}\right\}} 
 \def\R{\mathbb{R}}
 \def\independenT#1#2{\mathrel{\rlap{$#1#2$}\mkern2mu{#1#2}}}
 \newcommand\independent{\protect\mathpalette{\protect\independenT}{\perp}}
 \newcommand{\ignore}[1]{}
 \newcommand{\fdp}{\textnormal{FDP}}
 \newcommand{\fdr}{\textnormal{FDR}}
 \newcommand{\fdx}{\textnormal{FDX}}
 \newcommand{\fdphat}{\widehat{\textnormal{FDP}}}
 \newcommand{\fdpbar}{\overline{\textnormal{FDP}}}
 \newcommand{\vh}{\widehat{V}}
 \def\F{\mathcal{F}}
 \def\G{\mathcal{G}}
 \def\H{\mathcal{H}}
 \def\cR{\mathcal{R}}
 \newcommand{\sort}{\textnormal{sort}}
 \newcommand{\preordersel}{\textnormal{preorder-sel}}
 \newcommand{\online}{\textnormal{online}}
 \newcommand{\simpleol}{\textnormal{online-simple}}
 \newcommand{\adaptiveol}{\textnormal{online-adaptive}}
\begin{document}

\begin{frontmatter}
	\title{{Simultaneous high-probability bounds\\ on the false discovery proportion in\\  structured, regression, and online settings}}
	\runtitle{Simultaneous high-probability FDP bounds}
	
	\begin{aug}
		\author{\fnms{Eugene} \snm{Katsevich}\thanksref{t1,m1}\ead[label=e1]{ekatsevi@stanford.edu}} \and
		\author{\fnms{Aaditya} \snm{Ramdas}\thanksref{m2}\ead[label=e2]{aramdas@cmu.edu}}
		
		\thankstext{t1}{E.K.'s work was supported by the Fannie and John Hertz Foundation.}
		\runauthor{E. Katsevich and A. Ramdas}
		
		\affiliation{Stanford University\thanksmark{m1} and Carnegie Mellon University\thanksmark{m2}}
		
		\address{
Eugene Katsevich\\
		Department of Statistics\\
			Stanford University\\
			\printead{e1}\\
			\phantom{E-mail:\ }}
		
		\address{
Aaditya Ramdas\\
		Department of Statistics and Data Science\\
			Carnegie Mellon University \\
			\printead{e2}\\
			\phantom{E-mail:\ }}
		
	\end{aug}
	
\begin{abstract}


While traditional multiple testing procedures prohibit adaptive analysis choices made by users, \cite{GS11mt} proposed a simultaneous inference framework that allows users such flexibility while preserving high-probability bounds on the false discovery proportion (FDP) of the chosen set. In this paper, we propose a new class of such simultaneous FDP bounds, tailored for nested sequences of rejection sets. While most existing simultaneous FDP bounds are based on closed testing using global null tests based on sorted p-values, we additionally consider the setting where side information can be leveraged to boost power, the variable selection setting where knockoff statistics can be used to order variables, and the online setting where decisions about rejections must be made as data arrives. Our finite-sample, closed form bounds are based on repurposing the FDP estimates from false discovery rate (FDR) controlling procedures designed for each of the above settings. These results establish a novel connection between the parallel literatures of simultaneous FDP bounds and FDR control methods, and use proof techniques employing martingales and filtrations that are new to both these literatures. We demonstrate the utility of our results by augmenting a recent knockoffs analysis of the UK Biobank dataset.

\end{abstract}
	
	\begin{keyword}
		\kwd{false discovery rate}
		\kwd{FDR}
		\kwd{multiple testing}
		\kwd{post hoc confidence bounds}
		\kwd{false discovery exceedance}
		\kwd{FDX}
		\kwd{uniform martingale concentration}
	\end{keyword}
	
\end{frontmatter}

\section{Introduction}

%

\subsection{Multiple testing and exploration}

Consider testing a set of hypotheses $\mathcal H = \{H_1, \dots, H_n\}$, which we identify with $[n] \equiv \{1, \dots, n\}$. The false discovery proportion (FDP) of a rejection set $\cR \subseteq [n]$ is defined
\begin{equation*}
\fdp(\cR) \equiv \frac{|\cR \cap \H_0|}{|\cR|} \equiv \frac{V}{R},
\end{equation*}
where $\H_0 \subseteq \H$ is the set of nulls and $\fdp(\cR) \equiv 0$ when $\cR = \varnothing$ by convention (we use the $\equiv$ symbol for definitions). Based on the data at hand, a multiple testing procedure returns a rejection set $\cR^*$, and the Type-I error of such a procedure can be evaluated via different properties of the FDP distribution. For example, the false discovery rate (FDR) is defined as the mean of the FDP \citep{BH95} and the false discovery exceedance (FDX) is defined as the probability that FDP exceeds a pre-chosen threshold $\gamma$ \citep{lehmann2005generalizations}:
\begin{equation*}
\fdr \equiv \EE{\fdp(\cR^*)} \quad \text{and} \quad \fdx \equiv \PP{\fdp(\cR^*) > \gamma}.
\end{equation*}
A multiple testing procedure is said to control an error rate if it falls below a pre-specified level; e.g., a procedure controls the FDR at level $q$ if $\fdr \leq q$.

This multiple testing paradigm has been very successful as the workhorse of scientific discovery for the past several decades. However, \cite{GS11mt} (GS) argued that this prevailing paradigm may not be flexible enough to accommodate for the exploratory nature of modern large-scale data analysis: target levels for the FDP like $q$ or $\gamma$ are chosen in advance, and rejection sets obtained from FDR- or FDX-controlling procedures cannot be grown or shrunk without invalidating their guarantees. Therefore, the paradigm leaves little room for scientists seeking to use their domain expertise to adaptively select a rejection set while maintaining valid inferential guarantees. They may attempt to do so by applying a multiple testing procedure with different nominal levels and choosing one of the resulting rejection sets post hoc. They may also exclude rejected hypotheses that do not align with their scientific priors, or include unrejected hypotheses that were close to the rejection threshold. However, these practices may lead to an excess of false positives. 

Motivated by these considerations, GS proposed a complementary \textit{simultaneous inference} paradigm. In this paradigm, one constructs FDP upper bounds $\fdpbar(\cR)$ that hold uniformly across all sets $\cR$ with high-probability:
\begin{equation}
\PP{\fdp(\cR) \leq \fdpbar(\cR) \text{ for all } \cR \subseteq \H} \geq 1-\alpha.
\label{simultaneous}
\end{equation}
Such bounds allow the scientist to inspect any pairs $(\cR, \fdpbar(\cR))$ and freely choose the rejection set $\cR^*$ whose content and FDP bound suits them. Given the simultaneous nature of statement \eqref{simultaneous}, the upper bound on FDP continues to hold on the chosen set despite the user's data-dependent decision:
\begin{equation*}
\begin{split}
&\PP{\fdp(\cR^*) \leq \fdpbar(\cR^*)} \geq \\
&\PP{\fdp(\cR) \leq \fdpbar(\cR) \text{ for all } \cR \subseteq \H} \geq 1-\alpha.
\end{split}
\label{spotting_validity}
\end{equation*}
GS obtain such bounds by building on the closed testing principle, where a \textit{local test} $\phi_{\cR}$ (i.e. a test of the global null for a restricted set of hypotheses) is performed for each subset of hypotheses $\cR \subseteq \H$. The results of all these local tests are aggregated to form a bound $\fdpbar$ that provably satisfies \eqref{simultaneous}. Since then, there has been much exciting work on new algorithms and computational shortcuts for simultaneous FDP control, mostly based on closed testing, but these have been somewhat disconnected from the parallel growth in the FDR literature.

\subsection{A new class of simultaneous FDP bounds} \label{sec:new_class}
In this paper, we show that a variety of FDR procedures can be repurposed to obtain simultaneous FDP bounds, establishing a novel connection between FDR control and simultaneous FDP control. In particular, note that many FDR algorithms implicitly construct a path, or nested sequence of $n$ potential rejection sets
\begin{equation*}
\Pi\equiv(\cR_0,\dots,\cR_n), \text{ such that } \varnothing \equiv \cR_0 \subseteq \cR_1 \subseteq \cdots \subseteq \cR_n \subseteq [n].
\label{path}
\end{equation*}
Then, an estimate of the FDP 
\begin{equation}
\fdphat(\cR_k) \equiv \frac{a_0 + \vh(\cR_k)}{|\cR_k|}
\label{fdphat}
\end{equation}
is constructed for each $\cR_k \in \Pi$, where $\vh(\cR_k)$ is an estimate of $V(\cR_k) \equiv |\cR_k \cap \mathcal H_0|$ and $a_0 \geq 0$ is an additive regularization constant. This estimate is then used to obtain a cutoff point
\begin{equation}
k^* \equiv \max\{k: \fdphat(\cR_k) \leq q\},
\label{k_star}
\end{equation}
based on which the rejection set $\cR^* \equiv \cR_{k^*}$ is defined. 

Repurposing the path $\Pi$ and the estimate $\vh$, we propose the bound
\begin{equation}
\fdpbar(\cR_k) \equiv \frac{\overline V(\cR_k)}{|\cR_k|}; \quad \overline V(\cR_k) =\lfloor c(\alpha) \cdot (a + \vh(\cR_k))\rfloor,
\label{fdpbar}
\end{equation}
where $c(\alpha)$ are tight, explicit, dimension-independent constants such that 
\begin{equation}
\begin{split}
\label{simultaneous_selective}
\PP{\fdp(\cR) \leq \fdpbar(\cR) \text{ for all } \cR \in \Pi} \geq 1-\alpha,
\end{split}
\end{equation}
as long as the p-values satisfy an independence assumption. The constant $c(\alpha)$ depends implicitly on the regularization $a > 0$, which does not need to be the same as the original regularization $a_0$. Usually, we set $a = 1$. If desired, $\fdpbar$ can also be extended to all sets $\cR \subseteq \H$ to obtain a bound of the form~\eqref{simultaneous} through the process of \textit{interpolation} \citep{BetR17, GHS19}. Figure~\ref{fig:schematic} summarizes the proposed FDP bounds and how they compare and contrast to FDR methods and GS's simultaneous inference paradigm based on closed testing.

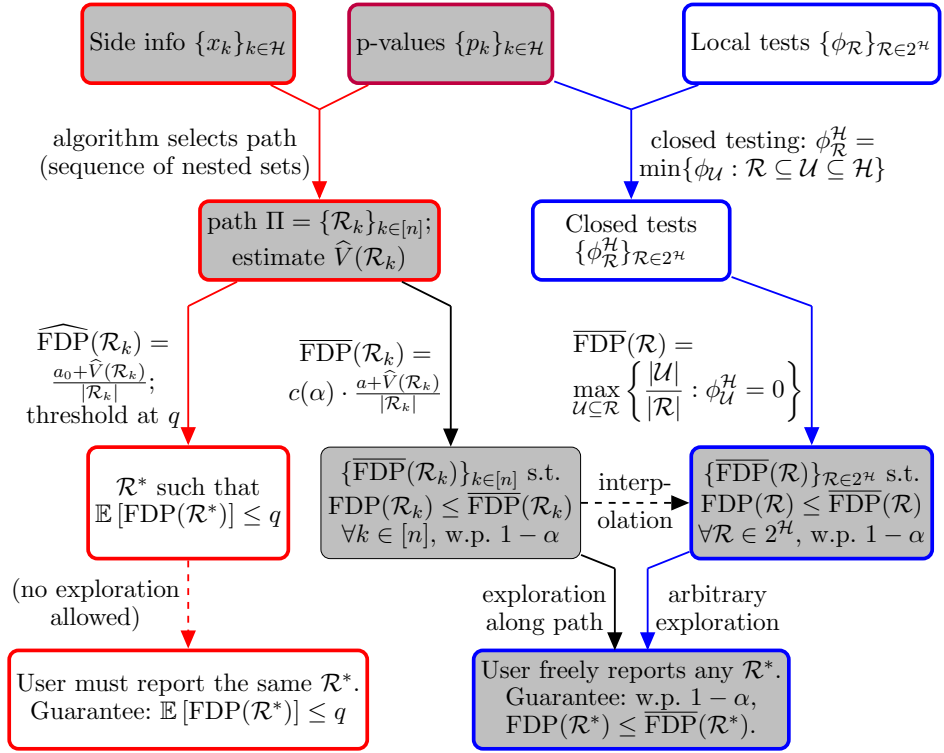
\begin{figure}[h!]
\resizebox{\textwidth}{!}{
\begin{tikzpicture}[]

\node[text opacity=1,fill=white, draw = red, ultra thick, align = center, minimum height = 1.2cm, rounded corners, fill = lightgray]  (sideinfo)  at(0,0)   {Side info $\{x_k\}_{k \in \H}$} ; 

\node[text opacity=1,fill=white, draw = purple, ultra thick, align = center, minimum height = 1.2cm, rounded corners, fill = lightgray]  (pvalues)  at(4,0)   {p-values $\{p_k\}_{k \in \H}$} ; 

\node[text opacity=1,fill=white, draw = blue, ultra thick, align = center, minimum height = 1.2cm,, rounded corners]  (localtests)  at(9.5,0)   {Local tests $\{\phi_{\cR}\}_{\cR \in 2^\H}$} ; 

\coordinate  (prepath)  at (2,-1)   {} ; 		
\coordinate  (preclosed)  at (6.75,-1)   {} ; 		

\node[text opacity=1,fill=white, draw = red, ultra thick, align = center, minimum height = 1.2cm, rounded corners, minimum width = 3cm, fill = lightgray]  (path)  at(2,-3)   {path $\Pi = \{\cR_k\}_{k \in [n]}$;\\ estimate $\vh(\cR_k)$} ; 

\node[text opacity=1,fill=white, draw = blue, ultra thick, align = center, minimum height = 1.2cm, rounded corners, minimum width = 3cm]  (closedtests)  at(6.75,-3)   {Closed tests \\ $\{\phi^{\H}_{\cR}\}_{\cR \in 2^\H}$};

\coordinate  (preFDRoutput)  at (0,-4)   {} ; 		
\coordinate  (prespottingoutput)  at (4,-4)   {} ; 		
\coordinate  (preGSoutput)  at (9.5,-4)   {} ; 	

\node[text opacity=1,fill=white, draw = red, ultra thick, align = center, minimum height = 1.7cm, rounded corners]  (FDRoutput)  at(0,-7)   {$\cR^*$ such that \\ $\EE{\fdp(\cR^*)} \leq q$} ; 

\node[text opacity=1,fill=white, draw = black, align = center, minimum height = 1.7cm, rounded corners, fill = lightgray]  (spottingoutput)  at(4,-7)   {$\{\fdpbar(\cR_k)\}_{k \in [n]}$ s.t. \\ $\fdp(\cR_k) \leq \fdpbar(\cR_k)$ \\ $\forall k \in [n]$, w.p. $1-\alpha$}; 

\node[text opacity=1,fill=white, draw = blue, ultra thick, align = center, minimum height = 1.7cm, rounded corners, fill=lightgray]  (GSoutput)  
at(9.5,-7)   {$\{\fdpbar(\cR)\}_{\cR \in 2^\H}$ s.t.\\ $\fdp(\cR) \leq \fdpbar(\cR)$ \\  $\forall \cR \in 2^\H$, w.p. $1-\alpha$}; 

\coordinate  (postspottingoutput)  at (6.5,-8)   {} ; 	
\coordinate  (postGSoutput)  at (7,-8)   {} ; 	

\coordinate  (FDPfinaloutputleft)  at (6.5,-9.25)   {} ; 	
\coordinate  (FDPfinaloutputright)  at (7,-9.25)   {} ; 

\node[text opacity=1,fill=white, draw = red, ultra thick, align = center, minimum height = 1.5cm, minimum width = 4.5cm, rounded corners]  (FDRfinaloutput)  at(0,-10)   {User  must report the same $\cR^*$. \\ Guarantee: $\EE{\fdp(\cR^*)} \leq q$} ; 

\node[text opacity=1,fill=white, draw = blue, ultra thick, align = center, minimum height = 1.5cm, minimum width = 4.5cm, rounded corners, fill = lightgray]  (FDPfinaloutput)  at(6.75,-10)   {User freely reports any $\cR^*$.\\ Guarantee: w.p. $1-\alpha$, \\ $\fdp(\cR^*) \leq \fdpbar(\cR^*)$.};


\draw[-, red, thick](sideinfo)--(prepath);
\draw[-, red, thick](pvalues)--(prepath);	
\draw[->, red, thick, text = black](prepath)--(path) node[midway,left, align=center] {algorithm selects path \\ (sequence of nested sets)};	

\draw[-, blue, thick](pvalues)--(preclosed);
\draw[-, blue, thick](localtests)--(preclosed);	
\draw[->, blue, thick, text = black](preclosed)--(closedtests) node[midway,right, align = center] {closed testing: $\phi_\cR^{\mathcal H} = $ \\ $\min\{\phi_{\mathcal U}: \mathcal R \subseteq \mathcal U \subseteq \mathcal H\}$};	

\draw[-, red, thick](path)--(preFDRoutput);
\draw[->, red, thick, text = black](preFDRoutput)--(FDRoutput) node[midway,left, align = center] {$\fdphat(\cR_k) =$ \\ $\frac{a_0 + \vh(\cR_k)}{|\cR_k|}$; \\ threshold at $q$};

\draw[-, thick](path)--(prespottingoutput);
\draw[->, thick](prespottingoutput)--(spottingoutput) node[midway,left, align = center] {$\fdpbar(\cR_k) =$ \\ $c(\alpha) \cdot\frac{a + \vh(\cR_k)}{|\cR_k|}$};

\draw[-, blue, thick](closedtests)--(preGSoutput);
\draw[->, blue, thick, text = black](preGSoutput)--(GSoutput) node[midway,left, align = left] {$\fdpbar(\cR) = $ \\ $\displaystyle \max_{\mathcal U \subseteq \mathcal R} \left\{\frac{|\mathcal U|}{|\mathcal R|}: \phi_{\mathcal U}^{\mathcal H} = 0\right\}$};

\draw[dashed,->, thick](spottingoutput)--(GSoutput) node[above, midway, align = center] {interp-} node[below, midway, align = center] {olation};

\draw[dashed,->, red, text = black, thick](FDRoutput)--(FDRfinaloutput) node[midway,left, align = center] {(no exploration \\ allowed)};

\draw[-, thick](spottingoutput)--(postspottingoutput);
\draw[->, thick](postspottingoutput)--(FDPfinaloutputleft) node[midway,left, align = center] {exploration \\ along path};

\draw[-, blue, thick](GSoutput)--(postGSoutput);
\draw[->, blue, text = black, thick](postGSoutput)--(FDPfinaloutputright) node[midway,right, align = center] {arbitrary \\ exploration};

\end{tikzpicture}
}
\caption{\small Schematic for proposed FDP bounds (shaded gray nodes), in the context of the usual FDR control framework (nodes with red borders) and the GS closed testing framework for simultaneous FDP control (nodes with blue borders). The proposed bounds borrow the path construction from FDR procedures to leverage side information, while obtaining simultaneous guarantees like the GS approach to permit exploration.}
\label{fig:schematic}
\end{figure}

We prove our bounds by developing a simple yet versatile proof technique---based on a martingale argument rather different from those commonly used in the FDR literature---to obtain tight non-asymptotic bounds for the probability that the stochastic process $|\cR_k \cap \mathcal H_0|$ of false discoveries hits certain boundaries. This technique is inspired by the proof of FDR control for the multilayer knockoff filter \citep{katsevich17mkf}.

Several simultaneous bounds $\fdpbar$ for the sets $\cR_t \equiv \{j: p_j \leq t\}$ are already available \citep{genovese2006exceedance, meinshausen2006false, meinshausen2006estimating, BetR17, Hemerik2019}, in addition to bounds valid for all subsets \citep{van2004multiple, GS11mt}. However, all of these bounds treat p-values exchangeably, whereas the link we establish between FDR and simultaneous FDP control allows us to leverage the rich recent literature (e.g. \cite{LB17, lei2016adapt}) on incorporating side information into FDR procedures  to obtain powerful simultaneous FDP bounds. Importantly, our bounds apply also to the knockoffs procedure \citep{BC15, CetL16} for high-dimensional variable selection, which produces an ordered set of independent ``one-bit p-values" to which we may apply one of our bounds. They also apply to the online setting, where p-values come in a stream and decisions to accept or reject must be made before seeing future data. Our results can also be used as diagnostic tools for FDR procedures: one can run an FDR procedure at a certain level and then obtain a valid $1-\alpha$ confidence bound on the FDP of the resulting rejection set. Finally, all of our bounds \eqref{fdpbar} have an appealingly simple closed form. 

Next, we preview our simultaneous FDP bound for the knockoffs procedure and demonstrate its utility on a large genome-wide association study data set (Section~\ref{sec:real_data}). Then, we state our main results and provide a high level proof sketch in Section~\ref{sec:theoretical}. In Section~\ref{sec:connections_selective}, we compare and contrast our theoretical results with those in the FDR literature. We then compare the performance of our simultaneous FDP bounds with existing alternatives via numerical simulations (Section~\ref{sec:simulations}) and then conclude the paper in Section~\ref{sec:conclusion}. The code to reproduce our numerical simulations and data analysis is available online at \url{https://github.com/ekatsevi/simultaneous-fdp}. 

\section{An illustration with real data} \label{sec:real_data}

Before formally stating and proving our bounds, we first illustrate their utility in the context of an application to genome-wide association studies (GWAS). The goal of GWAS is to identify the genetic factors behind various human traits. For this purpose, genotype and trait data are collected from large cohorts of individuals and then scanned for association. The recently compiled UK Biobank resource \citep{BetJ18} has data on half a million individuals. 

GWAS represents a vast variable selection problem, with genotypes viewed as covariates and the trait as the outcome. Since nearby genotypes are strongly correlated with each other, the units of inference usually are spatially localized genomic regions instead of individual genetic variants (i.e. variables are grouped before testing). The \textit{knockoffs} framework \citep{BC15} for variable selection with FDR control has been proposed to analyze GWAS data \citep{SetC17} and has recently been applied to several phenotypes in the UK Biobank \citep{SetS19}. 

The knockoffs procedure falls into the class of FDR procedures introduced in the previous section. A set of \textit{knockoff statistics} $W_1, \dots, W_p$ are constructed for each group of genetic variants, with the property that the distribution of $W_k$ is symmetric about the origin for null groups $k$. On the other hand, knockoff statistics for non-null groups should be large and positive. Therefore, an ordering $\pi(1), \dots \pi(p)$ of the groups is constructed by sorting the knockoff statistics by decreasing magnitude. The $k$th rejection set $\cR_k$ along the path is then defined as the set of groups among the first $k$ in the ordering whose knockoff statistics have positive signs:
\begin{equation*}
\cR_k \equiv \{\pi(j) \leq k: \textnormal{sign}(W_{\pi(j)}) > 0\}.
\end{equation*}
FDR control is proved for regularization $a_0 = 1$ and the estimate
\begin{equation}
\vh(\cR_k) \equiv |\{\pi(j) \leq k:  \textnormal{sign}(W_{\pi(j)}) < 0\}|,
\label{vh_knockoffs}
\end{equation}
which leverages the sign-symmetry of $W_k$ for null $k$.

Our theoretical result (Corollary~\ref{cor:knockoffs} in Section~\ref{sec:theoretical}) shows that the bound 
\begin{equation*}
\overline V_{\text{knockoff}}(\cR_k) \equiv \left\lfloor\frac{\log(\alpha^{-1})}{\log(2-\alpha)} \cdot \left(1 + |\{\pi(j) \leq k: \textnormal{sign}(W_{\pi(j)}) < 0\}|\right) \right\rfloor,
\end{equation*}
i.e. expression~ \eqref{fdpbar} with $a = a_0 = 1$, $c(\alpha) = \frac{\log(\alpha^{-1})}{\log(2-\alpha)}$, and $\vh$ as in definition~\eqref{vh_knockoffs}, satisfies the uniform coverage statement \eqref{simultaneous_selective}. For $\alpha = 0.05$, we have $c(\alpha) \approx 4.5$. In other words, \textit{inflating the knockoffs FDP estimate by 4.5 allows us to upgrade from bounding the FDP of one set on average to confidently bounding the true FDP across the entire path}. 

To illustrate the utility of this result, we apply it to the analysis of the platelet count trait in the UK Biobank data, borrowing the knockoff statistics that were made publicly available by \cite{SetS19} at \url{https://msesia.github.io/knockoffzoom/ukbiobank.html}. While several correlation cutoffs were used to create groups in \cite{SetS19}, here we consider the lowest resolution groups (of average width 0.226 megabases), whose size corresponds roughly to that yielded by current GWAS methodologies. In Figure \ref{fig:GWAS}, we plot $\fdpbar$ and $\fdphat$ as a function of the rejection set size. The dashed line shows the FDR target level $q = 0.1$ used by \cite{SetS19}. The $\fdphat$ curve crosses this threshold at $k^*$ with $|\cR_{k^*}|= 1460$. By comparison, the $\fdpbar$ curve is (necessarily) more conservative, but clearly yields informative FDP bounds for many rejection sets. It crosses the line $q = 0.1$ at $|\cR_{k^*}| = 813$, meaning that we are 95\% confident that at least 90\% of the top 813 genomic loci are associated with platelet count.

\begin{figure}[h!]
	\centering
	\includegraphics[width = \textwidth]{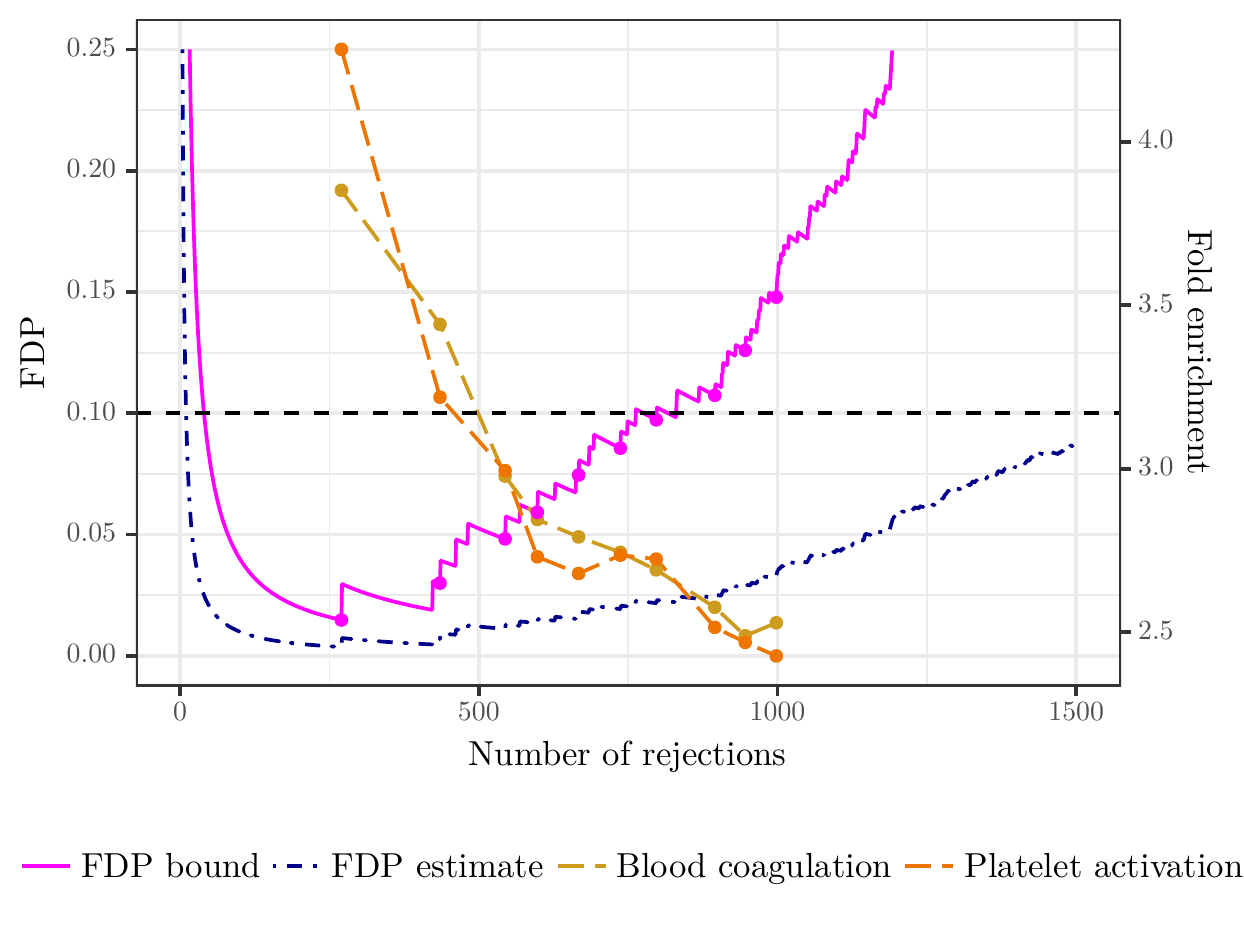}
	\caption{\small Knockoffs FDP estimate (dark blue) and proposed FDP upper bound (magenta) for GWAS analysis of platelet count, with highlighted points indicating interesting rejection sets. The degree of over-representation of two relevant Gene Ontology terms (orange and goldenrod) among genes in the neighborhood of genomic regions defined by each interesting rejection set.}
	\label{fig:GWAS} 
\end{figure}

Importantly, though, we can do much more than this. Instead of committing to $q = 0.1$ before seeing the data, we can explore several rejection sets along the knockoffs path, examining their content and FDP bound. One strategy we might take is to choose a set of points along the knockoffs path that represent different compromises between FDP bound and rejection set size; these points are highlighted along the $\fdpbar$ curve in Figure \ref{fig:GWAS}. For example, the leftmost highlighted point represents a rejection set of size 270 with an FDP bound of 0.015 (with 95\% confidence).


A domain expert might inspect each of these points and choose one that makes the most sense. In genetics, a common first step to evaluate a set of discoveries is to see whether they fit with known associations. The Gene Ontology, or GO \citep{AetE00} is a collection of biological processes, each annotated with a set of genes known to be involved in that process. Given a set of genomic regions, the GREAT \citep{MetB10} tool computes the ``enrichment" (i.e.  overrepresentation) of genes annotated to any given GO term falling in those genomic regions. For the platelet count trait, we would expect associated regions to be overrepresented for genes annotated to processes like ``blood coagulation" or ``platelet activation." We computed the fold enrichment (degree of overrepresentation) for these two terms, shown in Figure \ref{fig:GWAS} as dashed goldenrod and orange lines. The fold enrichment generally decreases as we increase the size of the rejection set, corresponding to our intuition that the strongest signals are generally in regions previously known to be associated with platelet count. By juxtaposing domain-specific annotations with simultaneous FDP bounds, a plot like Figure \ref{fig:GWAS} would already go a long way towards helping a domain expert decide on a biologically meaningful rejection set with a statistically sound Type-I error guarantee.


Finally, to quantify the price we pay for this extra flexibility, we consider several traits analyzed by \cite{SetS19} and compare the numbers of rejections we get for the original analysis ($\text{FDR} \leq 0.1$) with the numbers we get for controlling FDP at various levels based on $\fdpbar_{\text{knockoff}}$. The results are shown in Table \ref{table:real_data}. As we can see, there is certainly a trade-off between analytical flexibility and statistical power. However, at least in this dataset, we can still make substantial numbers of discoveries while enjoying the benefit of improved flexibility.

\begin{table}
	\begin{center}
	\includegraphics[width = \textwidth]{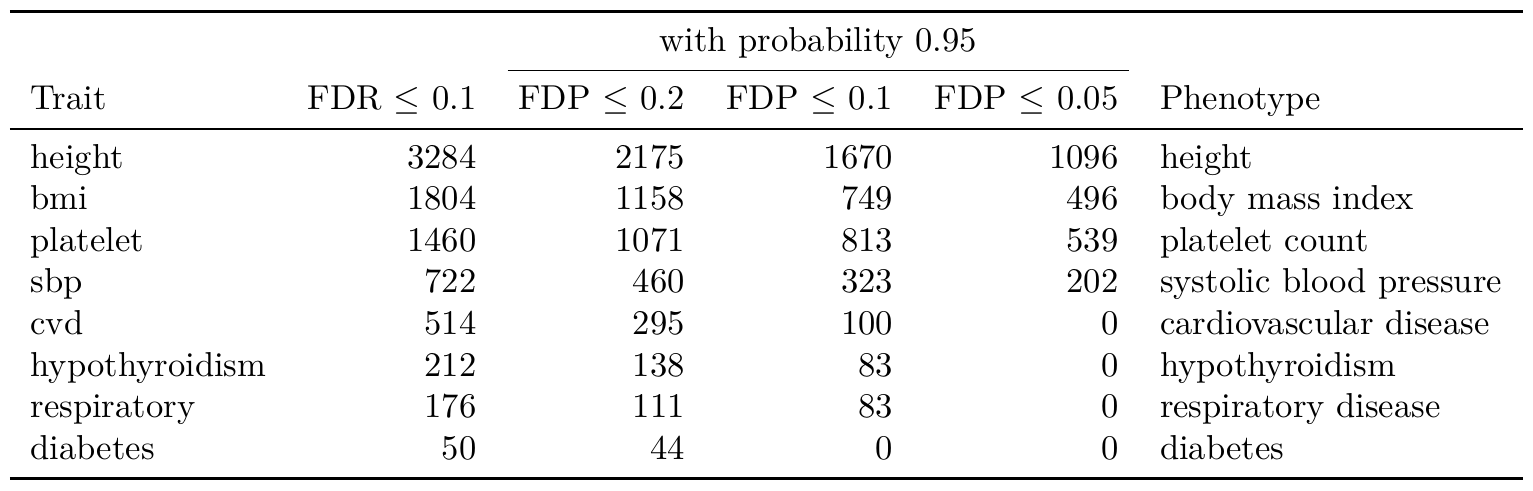}
	\end{center}
	\caption{Number of knockoffs discoveries for different traits in the UK Biobank, based on original analysis targeting $\fdr \leq 0.1$ and proposed bounds with three FDP thresholds.}
	\label{table:real_data} 
\end{table}

Having previewed the utility of our theoretical results on a real dataset, in the next section we formally state these results.

\section{Main results} \label{sec:theoretical}

In this section, we present a set of paths $\Pi$ along with corresponding bounds $\fdpbar$ of the form \eqref{fdpbar} and state conditions under which the guarantee \eqref{simultaneous_selective} holds. In fact, for brevity we will specify only the numerator $\overline V$ of $\fdpbar$. As discussed in the introduction, both $\Pi$ and $\vh$ will be borrowed directly from existing FDR procedures. We provide bounds for both the \textit{batch} and \textit{online} settings. In the batch setting, there is a finite number of hypotheses $H_1, \dots, H_n$ for which the p-values are available all at once; in the online setting, where there is an infinite stream of hypotheses, which arrive one at a time and a decision must be made about each hypothesis as soon as its p-value arrives. The proofs for all our results are provided in the supplement, but a sketch of the main idea is given in Section~\ref{sec:proof_sketch}.



\subsection{FDP bounds in the batch setting}

Here, we have a fixed, finite set of hypotheses $H_1, \dots, H_n$ and a set of p-values $p_1, \dots, p_n$. To construct a path, consider first ordering the hypotheses in some way $\pi(1), \pi(2), \dots, \pi(n)$, constructing $\pi$ to encourage non-nulls to appear near the beginning of the order. Then, define a p-value cutoff $p_* \in (0, 1]$. We form a path $\Pi$ by traversing the ordering and choosing hypotheses whose p-values passed the cutoff:
\begin{equation}
\Pi \equiv (\cR_0, \cR_1, \dots, \cR_n) \text{ such that } \cR_k \equiv \{\pi(j): j \leq k, p_{\pi(j)} \leq p_*\}.
\label{general_path_old}
\end{equation}
There are three ways of defining the path $\Pi$:
\begin{enumerate}
	\item $\textit{sort}$: $\pi$ is formed by sorting $p$-values; in this case usually $p_* \equiv 1$.
	\item $\textit{preorder}$: $\pi$ is fixed ahead of time using prior knowledge.
	\item $\textit{interact}$: $\pi$ is built on the fly using prior knowledge and $p$-values.
\end{enumerate}
Next, we elaborate on these path constructions in the batch setting and present FDP bounds for each of them.

\subsubsection{Sorted path} \label{sec:sorted}

Ordering hypotheses by p-value, $p_{\pi(k)} = p_{(k)}$, and setting $p_* = 1$ leads to
\begin{equation}
\cR_k = \{j: p_j \leq p_{(k)}\}.
\label{BH_rejections}
\end{equation}
This is the most common path construction among multiple testing procedures, serving as the basis for the Benjamini-Hochberg (BH) algorithm and many other step up/down algorithms (e.g., \cite{benjamini1999step, Gavrilov2009}). It is the obvious choice when no side information is available. \cite{Storey04} formulated the BH algorithm in terms of the FDR control paradigm described in Section~\ref{sec:new_class}, with $\vh_{\text{sort}}(\cR_k) \equiv n \cdot p_{(k)}$ and $a_0 = 0$:
\begin{equation}
\fdphat_{\text{sort}}(\cR_k) \equiv \frac{n \cdot p_{(k)}}{|\cR_k|}.
\label{FDP_hat_BH}
\end{equation}
The following theorem presents our FDP bounds~\eqref{fdpbar} for this path, based on the estimate $\vh_{\text{sort}}$.

\begin{theorem}
\label{BH_THEOREM} 
Let $\cR_k$ be defined via \eqref{BH_rejections}, and let
\begin{equation*}
\overline V_{\sort}(\cR_k) \equiv \left \lfloor\frac{\log (\frac 1 \alpha)}{\log\left(1 + \log(\frac 1 \alpha)\right)}\cdot(1 + n \cdot p_{(k)})\right \rfloor.
\end{equation*}
If the null p-values are independent and stochastically larger than uniform, i.e. $\PP{p_j \leq s}\leq s$ for all $j \in \H_0$ and $s \in [0,1]$, then the uniform bound \eqref{simultaneous_selective} holds for all $\alpha \in (0,0.31]$, i.e.
\begin{equation*}
\PP{\fdp(\cR_k) \leq \fdpbar_{\sort}(\cR_k) \textnormal{ for all } k \in [n]} \geq 1-\alpha.
\end{equation*}
\end{theorem}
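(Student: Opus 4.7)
The plan is to reduce the theorem to a uniform concentration inequality for the null counting process $V(t) := |\{j \in \H_0 : p_j \leq t\}|$ and then apply Ville's inequality to a carefully chosen exponential supermartingale. First I would perform three elementary reductions. Independence plus stochastic dominance of the null p-values permits a coupling replacing each null $p_j$ by an iid Uniform$[0,1]$ variable $U_j \leq p_j$, so WLOG the null p-values are iid uniform and $V(t)$ is a genuine binomial counting process. Since $n_0 := |\H_0| \leq n$, it suffices to bound $V(t)/(1 + n_0 t)$ rather than $V(t)/(1 + n t)$. And because $V(t)$ is piecewise constant, jumping only at the sorted null p-values, while $1 + n_0 t$ is continuous and strictly increasing, one checks that $\sup_k V(\cR_k)/(1 + n_0 p_{(k)}) = \sup_{t \in [0,1]} V(t)/(1 + n_0 t)$. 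The theorem thus reduces to showing
\begin{equation*}
\PP{\sup_{t \in [0,1]} \frac{V(t)}{1 + n_0 t} > c(\alpha)} \leq \alpha.
\end{equation*}

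Second, I would construct a nonnegative supermartingale of the form $M_t = \eta^{V(t)} C_t$ with respect to the natural filtration $\F_t = \sigma(p_j \one{p_j \leq t} : j \in \H_0)$, where $\eta > 1$ and $C_t$ is a compensator chosen to offset the predictable intensity $(n_0 - V(t))/(1-t)$ of $V(t)$. The parameter $\eta$ is dictated by the Poisson analogue: for a rate-$n_0$ Poisson process $N_t$, the martingale $\exp(\lambda N_t - n_0 t(e^\lambda-1))$ combined with Ville's inequality yields $\PP{\exists t : N_t > c(\alpha)(1 + n_0 t)} \leq \alpha$ precisely when $\lambda = \log(1 + \log(1/\alpha))$ and $e^\lambda - 1 = \log(1/\alpha)$, which is exactly how $c(\alpha)$ is defined. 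I would therefore take $\eta = 1 + \log(1/\alpha) = e^\lambda$, and use either the path-dependent compensator $C_t = \exp(-(\eta-1)\int_0^t (n_0 - V(s))/(1-s)\,ds)$ (which makes $M_t$ a genuine martingale with $M_0 = 1$) or the cruder $C_t = (1-t)^{(\eta-1)n_0}$ (which is a bona fide supermartingale by a Bernoulli-type inequality).

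Third, Ville's inequality gives $\PP{\sup_t M_t \geq 1/\alpha} \leq \alpha$. I would introduce the stopping time $\tau := \inf\{t : V(t) > c(\alpha)(1 + n_0 t)\}$, note that the jump structure of $V$ makes $V(\tau) \geq c(\alpha)(1 + n_0 \tau) + 1$ on $\{\tau < \infty\}$, and verify that this forces $M_\tau \geq 1/\alpha$ by direct algebra, using $c(\alpha)\lambda = \log(1/\alpha) = e^\lambda - 1$. This would yield $\PP{\tau < \infty} \leq \alpha$, which is the desired simultaneous bound.

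The main obstacle is recovering the \emph{affine} upper bound $c(\alpha)(1 + n_0 t)$ rather than a weaker variant. The simpler supermartingale with $C_t = (1-t)^{(\eta-1) n_0}$ only delivers $V(t) \leq c(\alpha)(1 + n_0 \log(1/(1-t)))$, which is strictly weaker than the theorem since $-\log(1-t) \geq t$. Matching the affine form therefore genuinely requires the path-dependent compensator, and verifying $M_\tau \geq 1/\alpha$ in that case boils down to controlling $\int_0^\tau (n_0 - V(s))/(1-s)\,ds$ at the stopping time, where the inequality $V(s) \leq c(\alpha)(1 + n_0 s)$ for $s < \tau$ can be exploited. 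The restriction $\alpha \leq 0.31$ presumably enters through a Bernoulli-type inequality requiring $\eta - 1 = \log(1/\alpha) \geq 1$ (i.e., $\alpha \leq e^{-1} \approx 0.368$), with $0.31$ providing modest slack.
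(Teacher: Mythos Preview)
Your reduction to a uniform bound on $V(t)/(1+n_0 t)$ and the Poisson heuristic for the constant $c(\alpha)$ are both correct, but the third step has a genuine gap. With the path-dependent compensator, $M_t = \eta^{V(t)}\exp\bigl(-(\eta-1)\int_0^t \tfrac{n_0-V(s)}{1-s}\,ds\bigr)$ is indeed a martingale with $M_0=1$, and Ville applies. However, to conclude $M_\tau \geq 1/\alpha$ at the first crossing of the affine boundary you would need
\[
\int_0^\tau \frac{n_0-V(s)}{1-s}\,ds \;\leq\; n_0\tau,
\]
since the Poisson compensator $n_0\tau$ is what makes the algebra close exactly. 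This inequality is \emph{false} in general: whenever $V(s)$ dips below its mean $n_0 s$ for some $s<\tau$, the integrand exceeds $n_0$, and nothing prevents such excursions before the crossing. Your suggested input, $V(s)\leq c(\alpha)(1+n_0 s)$ for $s<\tau$, bounds $V$ from \emph{above} and hence bounds the integrand from \emph{below}---the wrong direction. The trivial lower bound $V(s)\geq 0$ likewise gives $\int_0^\tau \tfrac{n_0}{1-s}\,ds = n_0\log\tfrac{1}{1-\tau}\geq n_0\tau$, again the wrong direction. I do not see how to close this without a substantially new idea.

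The paper circumvents this difficulty by an entirely different route: rather than building a supermartingale directly for the empirical process, it proves a \emph{stochastic comparison} (its Lemma~2) showing that for $x\geq 1.5$ the empirical process is less likely than a rate-$n$ Poisson process to cross the line $x(1+nt)$. The comparison exploits the representation of the empirical process as a Poisson process conditioned on $N_1=n$, and the proof is delicate, involving monotonicity of the continuous Poisson density $y\mapsto e^{-\lambda}\lambda^y/\Gamma(y+1)$ and digamma-function inequalities. Once the Poisson comparison is in hand, the exponential Poisson martingale you describe finishes the argument. The restriction $\alpha\leq 0.31$ comes precisely from the requirement $x=c(\alpha)\geq 1.5$ in this comparison lemma, not from a Bernoulli inequality of the form $\eta-1\geq 1$ as you conjectured.
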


\begin{remark} \label{BH_remark}
	In Theorem \ref{BH_THEOREM}, we require that $\alpha \leq 0.31$. However, strong numerical evidence shows that the bound is valid for all $\alpha$. The restriction on $\alpha$ is an artifact of our proof and does not represent an intrinsic breaking point of the bound. Despite this limitation in our proof, the range $\alpha \leq 0.31$ includes most confidence levels that would be used in practice (although the case $\alpha = 0.5$ might be of interest to bound the median of the FDP distribution and $\alpha = 1$ of interest to bound the null proportion).
\end{remark}

\subsubsection{Pre-ordered path}
The \textit{pre-ordered setting} applies when prior information (e.g. data from a similar experiment) sheds light on which hypotheses are more likely to be non-null, so a good ordering $\pi$ is known in advance. Several FDR methodologies taking advantage of pre-specified orderings have been developed; \cite{g2013false} and \cite{LB17} build paths using $p_* = 1$ while \cite{BC15} and \cite{lei2016power} use $p_* \in (0,1)$.

For the case $p_* = 1$, we use a construction from the accumulation test of \cite{LB17}: an \textit{accumulation function} $h$ is a function $h:[0,1] \rightarrow \mathbb R_+$ that is non-decreasing and integrates to 1. Then, we define 
\begin{equation}
\vh_{\textnormal{preorder-acc}}(\cR_k) \equiv \sum_{j = 1}^k h(p_{\pi(j)}).
\label{vh_preorder_acc}
\end{equation}
Alternatively, for $p^*\in(0,1)$, we can follow Selective SeqStep \citep{BC15} and Adaptive SeqStep \citep{lei2016power} to define 
\begin{equation}
\vh_{\preordersel}(\cR_k) \equiv \sum_{j = 1}^k \frac{p_*}{1-\lambda}I(p_{\pi(j)} > \lambda),
\label{vh_preorder_sel}
\end{equation}
where $\lambda \geq p_*$.
The following theorem presents our FDP bounds~\eqref{fdpbar} for the pre-ordered setting for the cases $p_* = 1$ and $p_* \in (0,1)$, which rely on estimates~\eqref{vh_preorder_acc} and \eqref{vh_preorder_sel}, respectively.

\begin{theorem}
	\label{ORDERED_THEOREM}
	Fix $a > 0$ and assume the null p-values are independent and stochastically larger than uniform. Given a prior ordering $\pi$, let 
	\begin{equation*}
	\cR_k \equiv \{\pi(j): j \leq k, p_{\pi(j)} \leq p_*\}.
	\end{equation*}
	
	\begin{enumerate}
		\item Set $p_* = 1$, choose a (possibly unbounded) accumulation function $h$, and define
		\begin{equation*}
		\overline V^h_{\textnormal{preorder-acc}}(\cR_k) \equiv \left\lfloor\frac{\log(\frac 1 \alpha)}{a\log\left(\left(\int_0^1 \alpha^{h(u)/a}du\right)^{-1}\right)} \cdot \left(a + \sum_{j = 1}^k h(p_{\pi(j)})\right)\right\rfloor.
		\label{acc_general_bound}
		\end{equation*}
		Then, the uniform bound \eqref{simultaneous_selective} holds for all $\alpha \in (0,1)$:
		\begin{equation*}
		\PP{\fdp(\cR_k) \leq \fdpbar^h_{\textnormal{preorder-acc}}(\cR_k)  \textnormal{ for all } k \in [n]} \geq 1-\alpha.
		\end{equation*}
		Moreover, if $\sup_{u \in [0,1]}h(u) \equiv B < \infty$, then we may instead use
		\begin{equation*}
		\overline V^B_{\textnormal{preorder-acc}}(\cR_k) \equiv \left \lfloor\frac{\log (\frac 1 \alpha)}{a\log\left(\left(1 - \frac{1-\alpha^{B/a}}{B}\right)^{-1}\right)}\cdot \left(a + \sum_{j = 1}^k h(p_{\pi(j)})\right)\right \rfloor.
		\label{acc_seqstep}
		\end{equation*}
		\item Set $p_* \in (0,1)$ and fix $\lambda \geq p_*$. Define 
		\begin{equation*}
		\overline V^B_{\textnormal{preorder-sel}} \equiv \left\lfloor\frac{\log(\frac 1 \alpha)}{a\log\left(1 + \frac{1-\alpha^{B/a}}{B}\right)}\cdot\left(a + \sum_{j = 1}^k \frac{p_*}{1-\lambda}I(p_{\pi(j)} > \lambda)\right)\right \rfloor,
		\label{c_AS}
		\end{equation*}
		where $B \equiv \frac{p_*}{1-\lambda}$. Then, uniform bound \eqref{simultaneous_selective} holds for all $\alpha \in (0,1)$:
		\begin{equation*}
		\PP{\fdp(\cR_k) \leq \fdpbar^B_{\textnormal{preorder-sel}}(\cR_k)  \textnormal{ for all } k \in [n]} \geq 1-\alpha.
		\end{equation*}
	\end{enumerate}
\end{theorem}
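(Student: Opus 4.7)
The plan is to use an exponential supermartingale argument via Ville's maximal inequality, in the spirit of the sketch in Section~\ref{sec:theoretical}. For all three cases, I would reveal the p-values in the pre-specified order $\pi$ via the filtration $\F_k \equiv \sigma(p_{\pi(1)},\dots,p_{\pi(k)})$. Since $\pi$ is fixed ahead of time and $\vh(\cR_k)$ depends only on the first $k$ p-values in the order, both $V(\cR_k)$ and $\vh(\cR_k)$ are $\F_k$-measurable. The target bound $V(\cR_k)\le c(\alpha)(a+\vh(\cR_k))$ for all $k$ is equivalent, for any $\beta>1$, to $\sup_k \beta^{V(\cR_k)-c\,\vh(\cR_k)}\le \beta^{ca}$; the floor in $\overline V$ handles the fact that $V(\cR_k)$ is integer valued.

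The key step is to define $M_k \equiv \beta^{V(\cR_k)-c\,\vh(\cR_k)}$ and verify that it is an $\F_k$-supermartingale. For a non-null $\pi(k+1)$, $V(\cR_{k+1})-V(\cR_k)=0$ while $\vh$ is non-decreasing, so $M_{k+1}/M_k\le 1$ automatically since $\beta>1$. For a null $\pi(k+1)$, the increment factor $M_{k+1}/M_k$ is a non-increasing function of $p_{\pi(k+1)}$: in part 1 it equals $\beta\cdot \beta^{-c\,h(p)}$, which is non-increasing because $h$ is non-decreasing; in part 2 it takes the values $\beta$, $1$, and $\beta^{-c p_*/(1-\lambda)}$ on the disjoint regions $\{p\le p_*\}$, $\{p_*<p\le \lambda\}$, $\{p>\lambda\}$, and these are monotonically non-increasing in $p$. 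By a monotone-coupling argument, the stochastic dominance assumption $\PP{p\le s}\le s$ for null $p$-values therefore makes the worst case the uniform distribution, reducing the supermartingale check to a deterministic integral/sum inequality.

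Once $M_k$ is a supermartingale with $\E[M_0]=1$, Ville's inequality gives $\PP{\sup_k M_k\ge 1/\alpha}\le \alpha$. I would couple the two free parameters by setting $\beta^{ca}=1/\alpha$, i.e.\ $\beta=\alpha^{-1/(ca)}$, so that $\beta^{-c h(u)}=\alpha^{h(u)/a}$ and $\beta^{-cB}=\alpha^{B/a}$. Substituting this into the worst-case supermartingale condition yields an implicit inequality for $c$, and solving for the smallest admissible $c$ gives exactly the stated constants: for part~1a, the condition $\int_0^1 \alpha^{h(u)/a}\,du \le \alpha^{1/(ca)}$ rearranges to $c\ge \log(1/\alpha)/(a\log((\int_0^1 \alpha^{h(u)/a}\,du)^{-1}))$; for part~1b, the convexity bound $q^x\le 1-x(1-q)$ on $x=h(u)/B\in[0,1]$ with $q=\alpha^{B/a}$ tightens the integral to $1-(1-\alpha^{B/a})/B$; for part~2, the three-region worst-case expectation $p_*\beta+(\lambda-p_*)+(1-\lambda)\alpha^{B/a}\le 1$ inverts to the stated formula with $B=p_*/(1-\lambda)$.

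The main obstacle will be the monotone coupling step: showing rigorously that because the null conditional expectation is taken of a non-increasing function of $p_{\pi(k+1)}$, stochastic dominance of null p-values over $U[0,1]$ makes the expectation \emph{smaller} than its uniform counterpart, so the supermartingale inequality for actual null p-values follows from the calibration inequality verified under uniform p-values. A secondary, more delicate issue for part~1a is that $h$ may be unbounded, which requires one to argue that the integral $\int_0^1 \alpha^{h(u)/a}\,du$ is finite (it is, since $\alpha<1$ and $h\ge 0$ give an integrand bounded by $1$) and strictly less than $1$ (so that the constant is finite), which follows from $\int_0^1 h=1$ forcing $h>0$ on a set of positive measure. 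Everything else is an algebraic simplification to match the display formulas for $\overline V^h_{\textnormal{preorder-acc}}$, $\overline V^B_{\textnormal{preorder-acc}}$, and $\overline V^B_{\textnormal{preorder-sel}}$.
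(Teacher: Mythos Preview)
Your approach is essentially the paper's: build an exponential process in the pre-specified order, verify the supermartingale property step by step, apply Ville's inequality, and invert for the constant. Your parametrization $\beta^{ca}=1/\alpha$ is exactly the paper's $\exp(-a\theta_x x)=\alpha$ with $\beta=e^{\theta_x}$ and $c=x$, and your three calibration computations (the integral, the convexity bound for bounded $h$, and the three-region sum) coincide with Cases~1--3 of Lemma~\ref{main_lemma}.

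There is one subtle gap. Your filtration $\F_k=\sigma(p_{\pi(1)},\dots,p_{\pi(k)})$ contains the non-null p-values, so at a null step the conditional law of $p_{\pi(k+1)}$ given $\F_k$ need not be super-uniform: the theorem assumes only that the nulls are mutually independent and marginally super-uniform, \emph{not} that they are independent of the non-nulls (the paper stresses in \secref{comparing_assumptions} that this weakening relative to prior FDR results is deliberate). Your monotone-coupling step therefore does not apply as written. The paper's remedy is to first upper bound the process by discarding the non-null contributions to $\vh$ (valid since each summand $h_j\ge 0$), after which everything depends only on null p-values and one may use the smaller filtration $\F_k=\sigma(\H_0,\{p_{\pi(j)}:j\le k,\ \pi(j)\in\H_0\})$. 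Under this filtration a null $p_{\pi(k+1)}$ is independent of $\F_k$ by the mutual-independence assumption, and your monotone-coupling argument then goes through verbatim. With that single adjustment your proof is correct and matches the paper's.
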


As we previewed in Section \ref{sec:real_data}, we can apply this bound to the knockoff filter \citep{BC15}, a variable selection methodology based on the idea of creating a \textit{knockoff variable} for each original variable, and then using these knockoffs as controls for the originals. Instead of p-values, the knockoff filter produces \textit{knockoff statistics} $W_j$ for each variable $j$. These are constructed so that
\begin{equation}
\{\textnormal{sign}(W_j)\}_{j \in \mathcal H_0} \independent \{|W_j|\}_{j \in [p]}, \{\textnormal{sign}(W_j)\}_{j \not \in \mathcal H_0}; \ \  \{\textnormal{sign}(W_j)\}_{j \in \mathcal H_0} \overset{\text{i.i.d.}}\sim \text{Ber}(1/2).
\label{knockoffs_property}
\end{equation}
The signs of the knockoff statistics are therefore a set of independent ``one-bit p-values'', to which the above theorem applies. 
\begin{corollary} \label{cor:knockoffs}
Let $W_1, \dots, W_p$ be a set of knockoff statistics satisfying property~\eqref{knockoffs_property}. Let $\pi$ be the ordering corresponding to sorting $W_j$ by decreasing magnitude, and define $\cR_k = \{\pi(j) \leq k: \textnormal{sign}(W_{\pi(j)}) > 0\}$. Then, bound~\eqref{simultaneous_selective} holds for 
\begin{equation*}
\overline V_{\textnormal{knockoff}}(\cR_k) \equiv \left\lfloor \frac{\log(\alpha^{-1})}{a\log(2-\alpha^{1/a})} \cdot \left(a + |\{\pi(j) \leq k: \textnormal{sign}(W_{\pi(j)}) < 0\}|\right)\right\rfloor.
\end{equation*}
\end{corollary}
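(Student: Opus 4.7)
The plan is to reduce Corollary~\ref{cor:knockoffs} to Theorem~\ref{ORDERED_THEOREM}, part 2, by encoding the knockoff signs as one-bit p-values. Specifically, I would set
\begin{equation*}
p_j \equiv \tfrac{1}{2}\cdot \one{\textnormal{sign}(W_j) > 0} + \one{\textnormal{sign}(W_j) \leq 0},
\end{equation*}
so that $p_j \in \{\tfrac{1}{2}, 1\}$, with $p_j = \tfrac{1}{2}$ iff $\textnormal{sign}(W_j) > 0$. By property~\eqref{knockoffs_property}, for $j \in \H_0$ the signs are i.i.d.\ $\mathrm{Ber}(1/2)$ and independent of $\{|W_i|\}_{i \in [p]}$ and of $\{\textnormal{sign}(W_i)\}_{i \notin \H_0}$. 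Hence $\PP{p_j \leq s} \leq s$ for every $s \in [0,1]$, and the null $p_j$'s are mutually independent, so they are valid p-values in the sense required by Theorem~\ref{ORDERED_THEOREM}.

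Next, I would apply Theorem~\ref{ORDERED_THEOREM}, part 2, with parameters $p_* = \lambda = \tfrac{1}{2}$, which gives $B = p_*/(1-\lambda) = 1$. The path $\cR_k$ from~\eqref{general_path_old} becomes $\{\pi(j) : j \leq k,\ p_{\pi(j)} \leq 1/2\} = \{\pi(j) : j \leq k,\ \textnormal{sign}(W_{\pi(j)}) > 0\}$, and the estimator~\eqref{vh_preorder_sel} simplifies to $\vh_{\preordersel}(\cR_k) = |\{\pi(j) : j \leq k,\ \textnormal{sign}(W_{\pi(j)}) < 0\}|$, both agreeing with the corollary. Substituting $B = 1$ into the constant in Theorem~\ref{ORDERED_THEOREM} yields $\log(\alpha^{-1}) / (a \log(1 + 1 - \alpha^{1/a})) = \log(\alpha^{-1}) / (a \log(2 - \alpha^{1/a}))$, matching the constant in $\overline V_{\textnormal{knockoff}}$ exactly.

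The one subtlety, which I expect to be the main obstacle, is that the ordering $\pi$ is \emph{data-dependent}: it is produced by sorting $|W_j|$ in decreasing order, whereas Theorem~\ref{ORDERED_THEOREM} is stated for an ordering fixed ahead of time. I would resolve this by conditioning. Let $\mathcal G$ denote the $\sigma$-algebra generated by $\{|W_j|\}_{j \in [p]}$ together with $\{\textnormal{sign}(W_j)\}_{j \notin \H_0}$. Property~\eqref{knockoffs_property} implies that, conditional on $\mathcal G$, the ordering $\pi$ is deterministic, the non-null $p_j$'s are deterministic, and the null $\textnormal{sign}(W_j)$ (hence null $p_j$) remain i.i.d.\ uniform on $\{1/2, 1\}$. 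Thus the hypotheses of Theorem~\ref{ORDERED_THEOREM} hold conditionally on $\mathcal G$; applying the theorem in this conditional world gives the simultaneous bound with $\mathcal G$-conditional probability at least $1-\alpha$, and integrating over $\mathcal G$ produces the unconditional guarantee~\eqref{simultaneous_selective}. Once this conditioning step is in place, the rest is just matching notation and arithmetic.
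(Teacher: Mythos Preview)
Your proposal is correct and follows essentially the same route as the paper: encode the signs as one-bit p-values, apply Theorem~\ref{ORDERED_THEOREM} part~2 with $p_*=\lambda=1/2$ and $B=1$, and handle the data-dependent ordering by observing that it depends only on $\{|W_j|\}$, which is independent of the null signs. Your explicit conditioning on $\mathcal G$ spells out what the paper compresses into the phrase ``so the ordering can be treated as fixed,'' and your sign convention for $p_j$ is the right one (the paper's printed proof has the cases $W_j<0$ and $W_j>0$ swapped).
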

\begin{proof}
Define $p_j = 1/2$ for $W_j < 0$ and $p_j = 1$ for $W_j > 0$. By property~\eqref{knockoffs_property}, it is easy to see that these p-values are independent of the ordering $\pi$ (so the ordering can be treated as fixed) and satisfy the assumptions of Theorem~\ref{ORDERED_THEOREM}. The rejection sets $\cR_k$ are defined via \eqref{general_path_old} with $p_* = 0.5$ and $\fdpbar$ is defined via \eqref{vh_preorder_sel} with $p_* = \lambda = 0.5$. Therefore, we may apply part 2 of Theorem~\ref{ORDERED_THEOREM}, plugging in $B = \frac{p_*}{1-\lambda} = 1$.
\end{proof}

\subsubsection{Interactive path}

In the \textit{interactive setting}, p-values are split into ``orthogonal" parts, with one part being used---together with side information---to determine a hypothesis ordering $\pi$ and the other part being used for FDR control. AdaPT \citep{lei2016adapt} uses the masked p-values $g(p_j) = \min(p_j, 1-p_j)$ and side information $x_j$ to build up the ordering, defining a path based on \eqref{general_path_old} with $p_* = 0.5$. It then uses $\vh_{\preordersel}$ with $p_* = \lambda = 0.5$ to construct an FDP estimate based on which the algorithm chooses a rejection set. This procedure is like Selective SeqStep, but with the ordering constructed interactively. STAR \citep{lei17star}, on the other hand, is the interactive analog of the accumulation test, using $p_* = 1$ and $\vh_{\text{preorder-acc}}$. It is shown that any bounded accumulation function $h$ has a corresponding orthogonal masking function $g$, based on which the ordering can be constructed. 

For our simultaneous FDP bounds, we use a slightly different path definition than AdaPT and STAR: we build up the path $\pi$ from beginning to end, while these two methods proceed in the opposite direction. However, we do not expect this change to impact the quality of the constructed path. The path construction we consider is as follows. $\pi(1)$ is chosen based on the information $\sigma(\{x_j, g(p_j)\}_{j \in [n]})$. Once $\pi(1)$ is chosen, the corresponding p-value $p_{\pi(1)}$ is unmasked, so the information $\sigma(\{x_j, g(p_j)\}_{j \in [n]}, p_{\pi(1)})$ can be used to choose $\pi(2)$. In general, we can choose $\pi(k+1)$ in any way based on the information
\begin{equation}
\mathcal G_k \equiv \sigma(\{x_j, g(p_j)\}_{j \in [n]}, \{p_{\pi(j)}\}_{j \leq k}).
\label{interactive_filtration}
\end{equation}
Therefore, as in AdaPT and STAR, the ordering $\pi$ may be built up interactively, with a human in the loop deciding the order based on $\mathcal G_k$. The following theorem provides FDP bounds for interactively constructed paths. 

\begin{theorem} \label{INTERACTIVE_THEOREM} 	
	Let $\pi$ be any ordering predictable with respect to the filtration~\eqref{interactive_filtration}, where $g$ is a masking function as defined below, and let
	\begin{equation*}
	\cR_k \equiv \{\pi(j): j \leq k, p_{\pi(j)} \leq p_*\}.
	\end{equation*}
	\begin{enumerate}
		\item Let $h$ be an accumulation function bounded by $B$ and let $g$ is its corresponding masking function (see \cite{lei17star}). Set $p_* = 1$, and define $\fdpbar^B_{\textnormal{interact-acc}} \equiv \fdpbar^B_{\textnormal{preorder-acc}}$. If the null p-values are independent of each other and of the non-null p-values, and the null p-values have non-decreasing densities, then uniform bound~\eqref{simultaneous_selective} holds for all $a > 0$ and all $\alpha \in (0,1)$:
		\begin{equation*}
		\PP{\fdp(\cR_k) \leq \fdpbar^B_{\textnormal{interact-acc}} (\cR_k)  \textnormal{ for all } k \in [n]} \geq 1-\alpha.
		\end{equation*}
		\item Fix $p_* \in (0,1)$ and $\lambda \geq p_*$. Define $g(p) = \min(p, \frac{p_*}{1-p_*}p)$ and $\fdpbar^B_{\textnormal{interact-sel}} \equiv \fdpbar^B_{\textnormal{preorder-sel}}$. If the null p-values are independent of each other and of the non-nulls, and the null p-values are mirror-conservative (see \cite{lei2016adapt}), then uniform bound~\eqref{simultaneous_selective} holds for all $\alpha \in (0,1)$:
		\begin{equation*}
		\PP{\fdp(\cR_k) \leq \fdpbar^B_{\textnormal{interact-sel}} (\cR_k)  \textnormal{ for all } k \in [n]} \geq 1-\alpha.
		\end{equation*}
	\end{enumerate}
\end{theorem}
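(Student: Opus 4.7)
The plan is to show that the supermartingale driving the proof of Theorem \ref{ORDERED_THEOREM} remains a supermartingale with respect to the richer interactive filtration $\mathcal{G}_k$ from \eqref{interactive_filtration}, so that Ville's inequality delivers \eqref{simultaneous_selective} with identical constants. Since $\fdpbar^B_{\textnormal{interact-acc}} \equiv \fdpbar^B_{\textnormal{preorder-acc}}$ and $\fdpbar^B_{\textnormal{interact-sel}} \equiv \fdpbar^B_{\textnormal{preorder-sel}}$, the value of $c(\alpha)$ is unchanged; only the supermartingale bookkeeping must be redone under the new filtration.

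First, I would write down the relevant supermartingale. For part 1, set $\beta = \log(\alpha^{-1})/(c(\alpha) a)$ and define
\[
M_k \equiv \exp\!\Big(\beta\, V(\cR_k) \;-\; \beta\, c(\alpha) \sum_{j=1}^k h(p_{\pi(j)})\Big), \qquad M_0 = 1.
\]
A direct substitution using $\beta c(\alpha) a = \log(\alpha^{-1})$ shows that $\{\exists k : V(\cR_k) > c(\alpha)(a + \vh(\cR_k))\} \subseteq \{\sup_k M_k > \alpha^{-1}\}$, so a supermartingale/Ville bound on $M_k$ yields the desired event-level inequality, and the integer-valued nature of $V$ lets us upgrade the resulting strict inequality to the floor in $\overline V$. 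The analogous construction for part 2 replaces $h(p_{\pi(j)})$ by $B \cdot I(p_{\pi(j)} > \lambda)$, with $B = p_*/(1-\lambda)$, and adds the indicator $I(p_{\pi(k)} \leq p_*)$ to the definition of the increment in $V$.

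Second, I would verify the supermartingale property under $\mathcal{G}_k$. Because $\pi$ is $\mathcal{G}$-predictable, both $\pi(k)$ and $\mathbf{1}\{\pi(k) \in \H_0\}$ are $\mathcal{G}_{k-1}$-measurable. For non-null $\pi(k)$ the increment $M_k/M_{k-1}$ is at most $1$ because $h \geq 0$ (and similarly in part 2, since the indicator contributing to $V$ is zero for non-nulls). For null $\pi(k)$, the independence of the null p-values from each other and from the non-nulls implies that the conditional distribution of $p_{\pi(k)}$ given $\mathcal{G}_{k-1}$ collapses to its conditional distribution given $g(p_{\pi(k)})$, and the supermartingale step reduces to the conditional moment generating function bound
\[
\EEst{\alpha^{h(p_{\pi(k)})/a}}{g(p_{\pi(k)})} \;\leq\; 1 - \frac{1 - \alpha^{B/a}}{B},
\]
which is exactly the bound underlying the bounded pre-ordered case. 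The structural fact needed is that this inequality continues to hold \emph{conditionally} on the masked value: for accumulation-style masking $g$, each $p$ is paired with a mirror image on the opposite level set of $h$, and the non-decreasing-density assumption forces the two-point posterior of the null to place at least the uniform share of its mass on the large-$h$ point. Mirror-conservativeness plays the exactly analogous role in part 2. Ville's inequality applied to $M_k$ then gives $\PP{\sup_k M_k > \alpha^{-1}} \leq \alpha$, which is precisely \eqref{simultaneous_selective}.

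The main obstacle is verifying the conditional MGF bound. In the pre-ordered Theorem \ref{ORDERED_THEOREM}, only a marginal bound under ``stochastically larger than uniform'' was required. The interactive setting forces us to condition on the partial information $g(p_j)$, and the minimal distributional hypotheses preserving the inequality pointwise in every masked value are non-decreasing density (part 1) and mirror-conservativeness (part 2) --- precisely the strengthenings appearing in the theorem's hypotheses, and the same properties exploited in the FDR proofs of \cite{lei17star} and \cite{lei2016adapt}. Once this conditional bound is established, $\mathcal{G}$-predictability of $\pi$ lets the pre-ordered supermartingale calculation go through essentially verbatim.
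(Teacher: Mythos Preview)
Your proposal is correct and follows essentially the same route as the paper: verify that the exponential supermartingale from the pre-ordered case (Lemma~\ref{main_lemma}, cases 2 and 3) remains a supermartingale under the richer interactive filtration, then apply Ville. The paper packages this by defining $\F_k \equiv \sigma(\H_0, \{x_j, g(p_j)\}_{j \in [n]}, \{p_{\pi(j)}\}_{j \leq k}) \supseteq \mathcal G_k$, checking that the first-moment conditions~\eqref{p_h_requirement} hold with respect to $\F_{k-1}$ (namely $\EEst{h(p_{\pi(k)})}{\F_{k-1}} \geq 1$ in part~1 and the analogous pair in part~2), and then invoking Lemma~\ref{main_lemma} as a black box; you instead unpack the supermartingale explicitly and state the conditional MGF bound directly. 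These are equivalent: your MGF inequality follows from the paper's first-moment inequality via the same convexity/chord argument used inside Lemma~\ref{main_lemma}. One small point to tidy: your process $M_k$ involves $V(\cR_k)$ and hence $\H_0$, so to be fully rigorous you should work with a filtration that contains $\H_0$ (as the paper does with $\F_k$) rather than $\mathcal G_k$ alone; this is harmless when $\H_0$ is fixed, but it is why the paper augments the filtration.
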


These results are similar to the previous section's bounds, but are more subtle due to the data-dependent ordering $\pi$.

\subsection{FDP bounds for any online algorithm}

Now, we turn to FDP bounds for the online setting. In this setting, decisions about hypotheses must be made as they arrive one at a time in a stream. Moreover, the order in which hypotheses arrive might or might not be the in the experimenter's control. Therefore, non-nulls might not necessarily occur early, and further the rejection decision for the $H_k$ must be made without knowing the outcomes of future experiments. Hence, in general, online multiple testing procedures must proceed differently from  batch ones:
online procedures adaptively produce a sequence of levels $\alpha_j$ at which to test hypotheses. Assuming for simplicity that $\pi(j)=j$, these levels define the online path:
\begin{equation}
\label{online_path}
\Pi_{\online} \equiv (\cR_1, \cR_2, \dots \cR_n, \dots) \text{ where } \cR_k \equiv \{ j \leq k: p_j \leq \alpha_j \}.
\end{equation}
The levels $\alpha_j$ are chosen based on the outcomes of past experiments, i.e. 
\begin{equation}
\alpha_{k+1} \in \mathcal G_k \supseteq \sigma(I(p_j \leq \alpha_j); j \leq k).
\label{online_predictable}
\end{equation}

The alpha-investing procedure of \cite{FS08} and follow-up works \citep{aharoni2014generalized, JM16, RYWJ17} are built on the analogy of testing a hypothesis at level $\alpha_j$ as spending wealth. One pays a price to test each hypothesis, and is rewarded for each rejected hypothesis. For each of these methods, the levels $\alpha_j$ are adaptively constructed to ensure that the wealth always remains non-negative. In this paper, we consider paths of the form (\ref{online_path}) corresponding to arbitrary sequences $\{\alpha_j\}$ satisfying requirement (\ref{online_predictable}), including those constructed by existing algorithms but any others as well.

Until recently, online FDR methods were formulated without reference to any $\fdphat$. However, \cite{RYWJ17}  noted that LORD \citep{JM16} implicitly bounds $\fdphat(\cR_k)$ for $a_0 = 0$ and 
\begin{equation*}
\vh_{\simpleol}(\cR_k) \equiv \sum_{j = 1}^k \alpha_j.
\label{vh_simpleol}
\end{equation*}
They also used this fact to design a strictly more powerful algorithm called LORD++.
Moving beyond  LORD++, \cite{SAFFRON} proposed an adaptive algorithm called SAFFRON, which uses $a_0=0$ and
\begin{equation*}
\vh_{\adaptiveol}(\cR_k) \equiv \sum_{j = 1}^k \frac{\alpha_j}{1-\lambda_j} I(p_{j}>\lambda_j).
\label{vh_adaptiveol}
\end{equation*}

SAFFRON improves upon the LORD estimate by correcting for the proportion of nulls, making it the online analog of the Storey-BH procedure \citep{Storey04}. Like the levels $\alpha_j$, the constants $\lambda_j$ may also be chosen based on the outcomes of prior experiments. 

The following theorem provides FDP bounds~\eqref{fdpbar} corresponding to the above two choices for $\vh_\online$.

\begin{theorem}\label{ONLINE_THEOREM}
	Fix $a > 0$ and let $\alpha_1, \alpha_2, \dots$ be any sequence of thresholds predictable with respect to filtration $\mathcal G_k$, as in \eqref{online_predictable}. Suppose the null p-values are stochastically larger than uniform conditional on the past:
	\begin{equation}
	\PP{p_k \leq s|\mathcal G_{k-1}} \leq s \quad \text{for each } k \in \mathcal H_0 \text{ and each } s \in [0,1].
	\label{online_assumption}
	\end{equation}
	\begin{enumerate}
		\item Define
		\begin{equation*}
		\overline V_{\simpleol}(\cR_k)\equiv \left \lfloor\frac{\log(\frac 1 \alpha)}{a\log\left(1+\frac{\log(\frac 1 \alpha)}{a}\right)}\cdot\left(a + \sum_{j = 1}^k \alpha_j\right)\right\rfloor.
		\label{c_LORD}
		\end{equation*} 
		Then uniform bound \eqref{simultaneous_selective} holds for all $\alpha \in (0,1)$:
		\begin{equation*}
		\PP{\fdp(\cR_k) \leq \fdpbar_{\simpleol}(\cR_k) \textnormal{ for all } k \geq 0} \geq 1-\alpha.
		\end{equation*}
		\item  Let $\lambda_j \geq \alpha_j$ for all $j$, $\{\lambda_j\}$ be predictable with respect to $\mathcal G_k$, and $\sup_{j}\frac{\alpha_j}{1-\lambda_j} \equiv B < \infty$. Define 
		\[	
		\overline V^B_{\adaptiveol}(\cR_k) \equiv  \left\lfloor\frac{\log(\frac 1 \alpha)}{a\log\left(1 + \frac{1-\alpha^{B/a}}{B}\right)}\cdot\left(a + \sum_{j = 1}^k \frac{\alpha_j}{1-\lambda_j} I(p_{j}>\lambda_j)\right)\right\rfloor.
		\]
		Then, uniform bound (\ref{simultaneous_selective}) holds for all $\alpha \in (0,1)$:
		\begin{equation*}
		\PP{\fdp(\cR_k) \leq \fdpbar_{\adaptiveol}(\cR_k) \textnormal{ for all } k \geq 0} \geq 1-\alpha.
		\end{equation*}
	\end{enumerate} 
\end{theorem}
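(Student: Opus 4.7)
The plan is to prove both parts via the same template: an exponential supermartingale combined with Ville's maximal inequality. For a tuning parameter $\lambda > 0$ and an appropriate $\psi(\lambda)$, I would study the process
\begin{equation*}
M_k = \exp\bigl(\lambda V(\cR_k) - \psi(\lambda)\,\vh_{\online}(\cR_k)\bigr), \quad V(\cR_k) \equiv |\cR_k \cap \mathcal{H}_0|,
\end{equation*}
together with its ``null-only'' variant $\tilde{M}_k$ in which $\vh_{\online}$ is replaced by its restriction to indices in $\mathcal H_0$. A one-step check, using only the predictability of the sequence $\{\alpha_j\}$ (and $\{\lambda_j\}$ in Part 2) with respect to $\mathcal G_k$ and conditional super-uniformity~\eqref{online_assumption}, will establish that $\tilde{M}_k$ is a non-negative supermartingale with $\EE{\tilde{M}_0}=1$. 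Since the extra non-null contributions in $\vh_{\online}$ are non-negative and enter $M_k$ with a negative sign, $M_k \leq \tilde{M}_k$ pointwise, so Ville's inequality applied to $\tilde{M}_k$ still yields $\PP{\sup_k M_k \geq 1/\alpha}\leq \alpha$. Rearranging and tuning $\lambda$ to match the announced coefficient multiplying $a$, then rounding down since $V(\cR_k)$ is integer-valued, produces the claimed bound.

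For Part 1 the natural choice is $\psi(\lambda) = e^\lambda - 1$: for each $j \in \mathcal H_0$, super-uniformity gives
\begin{equation*}
\EEst{\exp\bigl(\lambda I(p_j \leq \alpha_j)\bigr)}{\mathcal G_{j-1}} \leq 1 + \alpha_j(e^\lambda - 1) \leq \exp\bigl(\alpha_j(e^\lambda - 1)\bigr),
\end{equation*}
which immediately makes $\tilde{M}_k$ a supermartingale. Ville then yields $V(\cR_k) \leq \log(1/\alpha)/\lambda + ((e^\lambda - 1)/\lambda) \sum_{j \leq k} \alpha_j$ simultaneously over all $k$, with probability at least $1-\alpha$. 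Matching this to the target form $c(\alpha)(a + \vh_{\simpleol}(\cR_k))$ forces $\log(1/\alpha)/\lambda = c(\alpha) a$ and $(e^\lambda - 1)/\lambda = c(\alpha)$, a single implicit equation whose unique solution is $\lambda = \log\bigl(1 + \log(1/\alpha)/a\bigr)$ and returns precisely the announced constant.

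For Part 2 I would take $\psi(\lambda) \vh_{\adaptiveol}(\cR_k) = c \sum_{j \leq k} \frac{\alpha_j}{1-\lambda_j} I(p_j > \lambda_j)$ with an auxiliary constant $c > 0$ to be tuned alongside $\lambda$. Because the constraint $\alpha_j \leq \lambda_j$ makes $\{p_j \leq \alpha_j\}$ and $\{p_j > \lambda_j\}$ disjoint, the one-step conditional expectation for $j \in \mathcal H_0$ decomposes into three terms, and the worst-case super-uniform conditional law saturates both thresholds. The resulting supermartingale condition reduces to
\begin{equation*}
B_j(e^\lambda - 1) + e^{-c B_j} - 1 \leq 0, \qquad B_j \equiv \frac{\alpha_j}{1 - \lambda_j}.
\end{equation*}
The left-hand side is convex in $B_j \geq 0$ and vanishes at $B_j = 0$, so it suffices to enforce equality at $B_j = B$, giving $e^{-cB} = 1 - B(e^\lambda - 1)$. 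Combining this with the matching equations $\log(1/\alpha)/\lambda = c(\alpha) a$ and $c/\lambda = c(\alpha)$ then forces $c = \log(1/\alpha)/a$ and $\lambda = \log\bigl(1 + (1-\alpha^{B/a})/B\bigr)$, reproducing the stated constant.

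The main obstacle I anticipate is the Part 2 analysis, where one must pick $\lambda$ and $c$ so that (i) the exponential process is a supermartingale for \emph{every} admissible $B_j \in (0, B]$, and (ii) the resulting tail bound rewrites cleanly in the $c(\alpha)(a + \vh)$ form. The convexity observation is what lets the uniform-in-$B_j$ condition collapse to a single equality at $B_j = B$, and inverting the resulting implicit relation for $\lambda$ is the algebraically delicate step. Part 1 then appears as the degenerate case $\lambda_j \equiv 0$, $B = 1$ of the same template, which is why the two announced constants share the same structure.
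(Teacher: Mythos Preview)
Your proposal is correct and follows essentially the same route as the paper: both arguments build the exponential process $\exp\bigl(\theta\,V(\cR_k)-\theta x\,\vh(\cR_k)\bigr)$ (your $(\lambda,\psi(\lambda))$ and $(\lambda,c)$ are just a reparametrization of the paper's $(\theta,x)$), pass to the null-only version, verify the one-step supermartingale inequality from conditional super-uniformity (using convexity in Part~2), apply Ville's maximal inequality, and then solve for the tuning parameter so that the bound takes the form $c(\alpha)(a+\vh)$. The only cosmetic difference is where convexity enters in Part~2---the paper bounds $z\mapsto e^{-\theta x z}$ by its chord on $[0,B]$, whereas you check the supermartingale condition as a convex function of $B_j$---and your closing remark that Part~1 is the ``degenerate case $\lambda_j\equiv 0$, $B=1$'' is heuristic rather than literal (it violates $\lambda_j\ge\alpha_j$; the correct recovery is the limit $B\to 0$), but neither affects the validity of the argument.
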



The closest existing result to Theorem \ref{ONLINE_THEOREM} is that of \cite{JM16} (JM). JM consider a truncated version of generalized alpha-investing rules that satisfy a uniform FDX bound like $\PP{\sup_{k} \text{FDP}_{k} \geq \gamma} \leq \alpha.$ Their result is similar in spirit to part 1 of Theorem \ref{ONLINE_THEOREM}, but there are subtle differences. Their results, like most other FDX bounds, are pre hoc, meaning that given a $\gamma, \alpha \in (0,1)$, their procedure produces a sequence of rejections satisfying the FDX guarantee. Our guarantees are post hoc, meaning that they would apply to any sequence of rejections produced by any online algorithm, that may or may not have been designed for FDR or FDP control.

\subsection{A glimpse of the proof} \label{sec:proof_sketch}

In this section, we present a key exponential tail inequality lemma (Lemma \ref{main_lemma}) that underlies the proofs of Theorems \ref{ORDERED_THEOREM}, \ref{INTERACTIVE_THEOREM}, and \ref{ONLINE_THEOREM}. The proof of Theorem \ref{BH_THEOREM} requires a more involved proof technique, which we defer to the supplementary materials (Section~\ref{app:BH}), where we also show how Theorems \ref{ORDERED_THEOREM}, \ref{INTERACTIVE_THEOREM}, and \ref{ONLINE_THEOREM} follow from Lemma \ref{main_lemma} below (Section~\ref{app:234}). We use a martingale-based proof technique that is distinct from the technique used to prove FDR control; see Section \ref{sec:comparing_proof_techniques} for a comparison.


\begin{lemma} \label{main_lemma}
	Consider a (potentially infinite) set of hypotheses $H_1, H_2, \dots$, an ordering $\pi(1), \pi(2), \dots$, and a set of cutoffs $\alpha_1, \alpha_2, \dots$. Let
	\begin{equation*}
	\cR_k \equiv \{j \leq k: p_{\pi(j)} \leq \alpha_j\} \quad \text{and} \quad \fdphat_a(\cR_k) \equiv \frac{a + \sum_{j \leq k} h_j(p_{\pi(j)})}{|\cR_k|},
	\end{equation*}
	where $\{h_j\}_{j \geq 1}$ are functions on $[0,1]$ and the subscript $a$ on $\fdphat_a$ makes the dependence on the regularization $a > 0$ explicit. Suppose there exists a filtration 
	\begin{equation}
	\F_k \supseteq \sigma(\H_0, \{\pi(j)\}_{j \leq k}, \{h_j(p_{\pi(j)}), I(p_j \leq \alpha_j)\}_{j\leq k, \pi(j) \in \H_0})
	\label{proof_filtration}
	\end{equation}
	such that $\text{for all } \pi(k) \in \H_0$, we have
	\begin{equation}
	\PPst{p_{\pi(k)} \leq \alpha_k}{\F_{k-1}} \leq \alpha_k ~\text{ and }~ \EEst{h_k(p_{\pi(k)})}{\F_{k-1}} \geq \alpha_k,
	\label{p_h_requirement}
	\end{equation}
	almost surely. Then, for each $x > 1$ and $a > 0$, 
	\begin{equation}
	\PP{\sup_{k \geq 0}\frac{\fdp(\cR_k)}{\fdphat_a(\cR_k)} \geq x} \leq \exp(-a\theta_x x),
	\label{general_FDP_bound}
	\end{equation}
	where $\theta_x$ is defined in the following four cases:
	\begin{enumerate}
		\item If $h_k = h$ for some accumulation function $h$, $\alpha_k = 1$, $\pi(k)$ is pre-specified (i.e. nonrandom), and $p_{\pi(k)} \independent \F_{k-1}$ for all $\pi(k) \in \H_0$, then 
		$\theta_x$ is the unique positive root of the equation
		\begin{equation}
		\int_0^1 \exp(-\theta x h(u))du = \exp(-\theta).
		\label{theta_eq_acc}
		\end{equation}
		\item If $h_k = h$ for some accumulation function $h$ bounded by $B$ and $\alpha_k = 1$, then 
		$\theta_x$ is the unique positive root of the equation
		\begin{equation}
		\exp(-\theta) + \frac{1-\exp(-\theta x B)}{B} = 1.
		\label{theta_eq_B}
		\end{equation}
		\item If $h_k(p) = 0$ for all $p \leq \alpha_k$, and $h_k(p) \leq B$ for all $k,p$, then 
		$\theta_x$ is the unique positive root of the equation
		\begin{equation}
		\exp(\theta) - \frac{1-\exp(-\theta x B)}{B} = 1.
		\label{theta_eq_AS}
		\end{equation}
		\item If $h_k(p_k) = \alpha_k$, then 
		$\theta_x$ is the unique positive root of the equation 
		\begin{equation}
		e^\theta = 1 + \theta x.
		\label{theta_eq}
		\end{equation}
	\end{enumerate}
\end{lemma}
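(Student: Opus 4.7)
The overall strategy is to construct an exponential nonnegative supermartingale adapted to the filtration $\F_k$ in~\eqref{proof_filtration} and then invoke Ville's maximal inequality. A quick preliminary reduction simplifies the target event: since each $h_j \geq 0$, dropping the non-null summands from the numerator of $\fdphat_a$ only shrinks it, so
\[
\bigl\{\fdp(\cR_k)/\fdphat_a(\cR_k) \geq x\bigr\} \subseteq \bigl\{|\cR_k \cap \H_0| \geq x(a + \widetilde U_k)\bigr\},
\]
where $\widetilde U_k \equiv \sum_{j \leq k,\,\pi(j)\in \H_0} h_j(p_{\pi(j)})$ is $\F_k$-measurable by~\eqref{proof_filtration}. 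It therefore suffices to bound the probability that $|\cR_k \cap \H_0| - x\widetilde U_k$ ever exceeds $xa$.

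For a parameter $\theta>0$ to be chosen shortly, define
\[
M_k \equiv \exp\!\bigl(\theta\, |\cR_k \cap \H_0| - \theta x \widetilde U_k\bigr), \quad M_0 = 1.
\]
On rounds where $\pi(k)\notin \H_0$ (an $\F_{k-1}$-measurable event when $\pi$ is predictable), neither term in the exponent changes, so $M_k=M_{k-1}$. When $\pi(k)\in \H_0$,
\[
\EEst{M_k/M_{k-1}}{\F_{k-1}} = \EEst{\exp\!\bigl(\theta\,I(p_{\pi(k)}\le \alpha_k) - \theta x\,h_k(p_{\pi(k)})\bigr)}{\F_{k-1}},
\]
and the two conditional bounds in~\eqref{p_h_requirement} are exactly what is needed to control this expectation. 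The claim is that $\theta=\theta_x$, as defined by whichever of~\eqref{theta_eq_acc}--\eqref{theta_eq} applies, is the unique positive value that drives this ratio bound to $1$. Once this is verified case by case, $(M_k)$ is a nonnegative supermartingale and Ville's inequality yields $\PP{\sup_k M_k \geq e^{\theta_x x a}} \leq e^{-\theta_x x a}$, which is precisely~\eqref{general_FDP_bound}.

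The core of the work is the four one-step calculations. For Case~4 ($h_k\equiv \alpha_k$ deterministic), the ratio equals $e^{-\theta x \alpha_k}\bigl[1+\PPst{p_{\pi(k)}\le \alpha_k}{\F_{k-1}}(e^\theta-1)\bigr]$, and combining the null bound with $1+y\le e^y$ reduces the supermartingale requirement to $e^\theta\le 1+\theta x$, i.e.~\eqref{theta_eq}. For Case~1 (pre-ordered accumulation with $\alpha_k=1$ and $p_{\pi(k)} \independent \F_{k-1}$ uniform under the null), the conditional expectation factors as $e^\theta \int_0^1 e^{-\theta x h(u)}\,du$, matching~\eqref{theta_eq_acc}. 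For Cases~2 and~3, I would exploit the convexity of $y\mapsto e^{-\theta x y}$ on $[0,B]$ via the chord bound $e^{-\theta x y}\le 1-(y/B)(1-e^{-\theta x B})$; combined with the assumed conditional lower bound $\EEst{h_k(p_{\pi(k)})}{\F_{k-1}}\ge \alpha_k$, and (in Case~3) the fact that $h_k$ vanishes on the selected region, this reduces the supermartingale condition to~\eqref{theta_eq_B} and~\eqref{theta_eq_AS}, respectively.

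The main obstacle will be Case~2, the interactive bounded-accumulation setting, where the data-dependent ordering $\pi$ precludes the clean independence argument used in Case~1: one must instead lean entirely on the conditional lower bound $\EEst{h(p_{\pi(k)})}{\F_{k-1}}\ge 1$, whose availability ultimately rests on the masking construction and the non-decreasing null density assumption to be used in Theorem~\ref{INTERACTIVE_THEOREM}. The chord inequality has to be combined with this bound carefully, but the algebra then reduces cleanly to~\eqref{theta_eq_B}, and Ville's maximal inequality delivers~\eqref{general_FDP_bound} with the stated constant.
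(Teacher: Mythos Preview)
Your proposal is correct and follows essentially the same route as the paper: reduce to the null-only sum using $h_j\ge 0$, exponentiate with parameter $\theta$, verify the one-step supermartingale condition in each of the four cases (via the chord bound on $[0,B]$ for Cases~2--3, and the inequality $1+y\le e^y$ for Case~4), and conclude with Ville's maximal inequality. One small imprecision: in Case~1 the null $p$-values are only assumed superuniform, not exactly uniform, so the factorization you wrote is an inequality (using that $u\mapsto e^{-\theta x h(u)}$ is nonincreasing) rather than an equality; also, your closing paragraph about the masking construction really pertains to \emph{verifying} hypothesis~\eqref{p_h_requirement} in Theorem~\ref{INTERACTIVE_THEOREM}, not to the proof of the lemma itself, which takes that bound as given.
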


Let us outline the proof of the lemma. Fix any arbitrary $x > 1$ and $\theta > 0$. We first restrict our attention to only the nulls as follows:
\begin{equation*}
\begin{split}
&\PP{\sup_{k \geq 0}\frac{\fdp(\cR_k)}{\fdphat_a(\cR_k)} \geq x} \quad=\quad \PP{\sup_{k \geq 0}\frac{V(\cR_k)}{a + \vh(\cR_k)} \geq x} \\
&\quad= \PP{\sum_{j = 1}^k I(p_{\pi(j)} \leq \alpha_j)I(\pi(j) \in \H_0) \geq ax + x\sum_{j = 1}^k h_j(p_{\pi(j)}), \text{ for some } k \geq 0} \\
&\quad\leq \PP{\sum_{j = 1}^k I(p_{\pi(j)} \leq \alpha_j)I(\pi(j) \in \H_0) \geq ax + x\sum_{j = 1}^k h_j(p_{\pi(j)})I(\pi(j) \in \H_0), \text{ for some } k \geq 0}.
\end{split}
\end{equation*}
Now, we may rearrange terms and employ the Chernoff exponentiation trick, to conclude that:
\begin{equation*}
\begin{split}
&\PP{\sup_{k \geq 0}\frac{\fdp(\cR_k)}{\fdphat_a(\cR_k)} \geq x} \\
&\quad= \PP{\sup_{k \geq 0}\ \exp\left(\theta\left(\sum_{j = 1}^k [I(p_{\pi(j)}\leq \alpha_j)-xh_j(p_{\pi(j)})]I(\pi(j) \in \H_0) \right)\right)\geq \exp(a\theta x)} \\
&\quad\equiv \PP{\sup_{k \geq 0}\ Z_k \geq \exp(a\theta x)}.
\end{split}
\end{equation*}
We claim that if $\theta = \theta_x$, then $Z_k$ is a supermartingale with respect to $\F_k$. If this is the case, then the conclusion of the lemma would follow from the \cite{ville_etude_1939} maximal inequality for positive supermartingales:
\begin{equation*}
\PP{\sup_{k \geq 0}\ Z_k \geq \exp(a\theta x)} \leq \exp(-a\theta x)\EE{Z_0} = \exp(-a\theta x).
\label{maximal_inequality}
\end{equation*}
as desired. Hence, what remains is to show that in each of the four cases, the choices of $\theta_x$ make $Z_k$ a supermartingale. To derive the FDP bounds in Theorems~\ref{ORDERED_THEOREM}, \ref{INTERACTIVE_THEOREM}, and \ref{ONLINE_THEOREM}. we set
\begin{equation}
\fdpbar(\cR_k) \equiv x \cdot \fdphat_a(\cR_k),
\label{fdpbar_def}
\end{equation}
where $x$ is chosen such that $\exp(-a\theta_x x) = \alpha$. We defer these derivations to Section \ref{app:234} in the supplementary materials. In fact, our definition \eqref{fdpbar} has an added floor function in the numerator, which we may add for free because the true number of false discoveries $V(\cR_k)$ is always an integer. 

\section{Comparisons to work on FDR control}
\label{sec:connections_selective}

The paths and FDP bounds we construct are closely tied to existing FDR control algorithms. Table~\ref{table:ordered_testing} shows each of our bounds as well as the FDR methods they are related to.  In this section, we explore the relationships between our results and those already existing in the FDR literature.

\begin{table}[h!]
	\begin{center}
		\begin{tabular}{l|l|l|l}
			ordering & p-val cutoffs & $\vh(\cR_k)$ & FDR method\\
			\hline 
			sort& $p_* = 1$ & $n \cdot p_{(k)}$ & BH \\
			preorder   & $p_* = 1$ &  $\sum_{j \leq k}h(p_{j})$ & Accumulation test\\
			preorder & $ p_* \in (0,1)$ & $\sum_{j \leq k} \frac{p_*}{1-\lambda}I(p_j>\lambda)$ & Selective and Adaptive SeqStep \\ 		
			interact &$ p_* = 1$ & $\sum_{j \leq k}h(p_{\pi(j)})$ & STAR\\
			interact  &$p_*\in (0,1)$ & $\sum_{j \leq k}\frac{p_*}{1-\lambda}I(p_{\pi(j)}>\lambda)$ & AdaPT\\
			(online) & $\alpha_j \in \mathcal G_{j-1}$ & $\sum_{j \leq k} \alpha_j$ & LORD, LORD++\\
			(online) & $\alpha_j \in \mathcal G_{j-1}$ & $\sum_{j \leq k} \frac{\alpha_j}{1-\lambda_j}I(p_j>\lambda_j)$ & SAFFRON, alpha-investing \\
		\end{tabular}
	\end{center}
	\caption{Overview of proposed FDP bounds and FDR procedures inspiring them \newline ($h$ denotes an accumulation function).}
	\label{table:ordered_testing}
\end{table}

\subsection{Comparing the roles of $\fdphat$}

We start by recalling the definition \eqref{fdphat} of $\fdphat$. Batch FDR algorithms use this estimate of FDP to automatically choose the rejection set $\cR^* \in \Pi$, which is done via (\ref{k_star}). On the other hand, we use a regularized $\fdphat_a$ as a building block for our confidence envelopes $\fdpbar$ (recall definitions \eqref{fdpbar} and \eqref{fdpbar_def}), which the user may then inspect to choose $\cR^*$. It is important to remark here that while our bounds are \emph{inspired} by existing FDR algorithms, they are not intrinsically tied to the use of those procedures in any way. Indeed, we often employ the path $\Pi$ of known FDR procedures, but not their stopping criterion or choice of final rejected set.

Each FDR algorithm comes with a ``built-in" choice of regularization $a_0$. For example, the BH algorithm uses no regularization (i.e. $a_0 = 0$), while accumulation tests \citep{LB17} use $a_0 = \sup_{u \in [0,1]}h(u)$. The built-in regularizations are chosen to ensure FDR control (see below). On the other hand, we consider arbitrary regularizations $a > 0$, with different regularizations leading to different constants $c(\alpha)$ and therefore different confidence envelopes. Different regularization parameters lead to envelopes that are tighter in different places; we have found $a = 1$ to be a good choice.

\subsection{Comparing proof techniques} \label{sec:comparing_proof_techniques}
For each existing batch FDR algorithm, FDR control is established using the following martingale argument. First, the ratio $\frac{\fdp(\cR_k)}{\fdphat(\cR_k)} = \frac{V(\cR_k)}{a_0 + \vh(\cR_k)}$ is upper-bounded by a stochastic process $L_k$, such that $L_k$ is a supermartingale with respect to a \textit{backwards} filtration $\{\Omega_k\}_{k = n, \dots, 1}$. Furthermore, it is shown that $\EE{L_n} \leq 1$. The choice of regularization $a_0$ is usually made to ensure the existence of such an $L_k$. Using the fact that $k^*$ picked using rule \eqref{k_star} is a stopping time with respect to $\Omega_k$, we obtain $\fdr = \EE{\fdp(\cR_{k^*})} \leq q$ in a single line using the optional stopping theorem:
\begin{equation*}
\label{one-liner}
\frac{\EE{\fdp(\cR_{k^*})}}{q} \leq \EE{\frac{\fdp(\cR_{k^*})}{\fdphat(\cR_{k^*})}} \leq \EE{L_{k^*}} \leq 1.
\end{equation*}
This technique was first used by \cite{Storey04} for the BH procedure, but remarkably, the other batch procedures mentioned in this paper like knockoffs, AdaPT, STAR, ordered tests and others, all implicitly use the same argument (though it was not expressed as succinctly as above), each with different $L_k,\Omega_k$. While we also rely on a martingale argument to prove our FDP bounds (recall Section \ref{sec:proof_sketch}), the martingales we construct are fundamentally different: they are exponential and employ forward filtrations instead of backwards ones.

Note that the original supermartingales $(L_k, \Omega_k)$ used to prove FDR control for batch procedures can also be used to obtain tail bounds like \eqref{general_FDP_bound}, for original regularization $a = a_0$. Indeed, using Ville's maximal inequality again, we find
\begin{equation*}
\PP{\sup_{0 \leq k \leq n} \frac{\fdp(\cR_{k})}{\fdphat(\cR_{k})} \geq x} \leq \PP{\sup_{0 \leq k \leq n} L_k \geq x} \leq \frac{1}{x}\EE{L_n} \leq \frac{1}{x}.
\label{unregularized_inequalities}
\end{equation*}
Therefore, for each batch procedure we consider $\fdpbar(\cR_k) = \frac{1}{\alpha}\fdphat(\cR_k)$ is also a valid upper confidence band for FDP. Versions of this bound have been considered before in the case of BH, e.g. by \cite{robbins1954one} and \cite{goeman2016simultaneous}. This implies that for all considered batch procedures, we have
\begin{equation*}
\text{Median}\left[\sup_{\cR \in \Pi} \frac{\fdp(\cR)}{\fdphat(\cR)}  \right] \leq 2.
\label{median_bound}
\end{equation*}
However, note that the constants $c(\alpha) = \alpha^{-1}$ grow quickly as $\alpha$ decays. On the other hand, the constants we provide scale logarithmically, rather than linearly, in $\alpha^{-1}$. 

\subsection{Comparing assumptions} \label{sec:comparing_assumptions}
This martingale argument for FDR control and the argument we employ here both require some form of independence among the p-values. Furthermore, our assumptions for each of these theorems are identical to or weaker than the ones needed to prove FDR control. For Theorem \ref{BH_THEOREM}, we only need to make assumptions on the distribution $(p_j)_{j \in \H_0}$, so unlike existing proofs of FDR control for BH, we do not make any assumptions on the dependence of the nulls on the non-nulls (see \cite{Dwork2018} for another example of such a result). In Theorem \ref{ORDERED_THEOREM}, we assume that the nulls are independent and stochastically larger than uniform, whereas for the original FDR control results \citep{BC15, LB17} it was also required that nulls be independent of non-nulls. Furthermore, part 1 of Theorem \ref{ORDERED_THEOREM} provides an FDP bound for possibly unbounded accumulation functions, whereas the original work proposing accumulation tests \citep{LB17} requires accumulation functions to be bounded. In Theorems \ref{INTERACTIVE_THEOREM} and \ref{ONLINE_THEOREM}, our assumptions are identical to those in the original works. Finally, we remark that the only FDR procedure which has a guarantee under dependence is BH, for which a non-martingale proof was proposed by \citep{BY01}.

Next, we illustrate the performance of some of our bounds in simulations.

\section{Numerical simulations} \label{sec:simulations}

In this section, we compare the proposed FDP bounds to existing bounds, in the sorted and pre-ordered settings. We also examine the effect of correlation on the proposed bounds. In all cases, we take $n = 2500$ and $\alpha = 0.1$. For the proposed bounds, we take $a = 1$.

\subsection{Sorted setting} \label{sec:sorted_simulations}

As discussed in the introduction, the setting in which the most prior work has been done is when hypotheses are ordered based on p-value. In other words, we are concerned with bounds $\fdpbar$ for the sets $\cR_t \equiv \{j: p_j \leq t\}$ such that
\begin{equation}
\PP{\fdp(\cR_t) \leq \fdpbar(\cR_t) \ \text{ for all } t \in [0,1]} \geq 1-\alpha.
\label{BH_path}
\end{equation}

\subsubsection{Comparing to other explicit, finite-sample bounds}

The bounds most comparable to ours are explicit, finite-sample bounds. Two such bounds were proposed by \cite{meinshausen2006estimating}: $\fdpbar(\cR_t) \equiv \frac{\overline V(t)}{|\cR_t|}$ for
\begin{equation*}
\overline V_{\text{Robbins}}(t) \equiv \frac{1}{\alpha}nt; \quad \overline V_{\text{DKW}}(t) \equiv \sqrt{\frac{n}{2}\log \frac{1}{\alpha}} + nt.
\end{equation*}
These bounds derive from inequalities by \cite{robbins1954one} and \cite{dvoretzky1956asymptotic}, respectively. Note that inequality \eqref{BH_path} for $\overline V_{\text{DKW}}$ is based on the one-sided DKW inequality and is valid for $\alpha < 0.5$. Compare these to our bound, which is
\begin{equation*}
\overline V_\sort(t) \equiv \frac{\log (\frac 1 \alpha)}{\log\left(1 + \log(\frac 1 \alpha)\right)}\cdot (1 + nt).
\end{equation*}
Note that we may add the floor function to all three of these bounds, omitted here for simplicity. By inspecting these three bounds, we see that the DKW bound is the tightest when $t$ is large, the Robbins bound is the tightest when $t$ is small, and our bound is the tightest in an intermediate range. 

\begin{figure}[h!]
	\includegraphics[width = \textwidth]{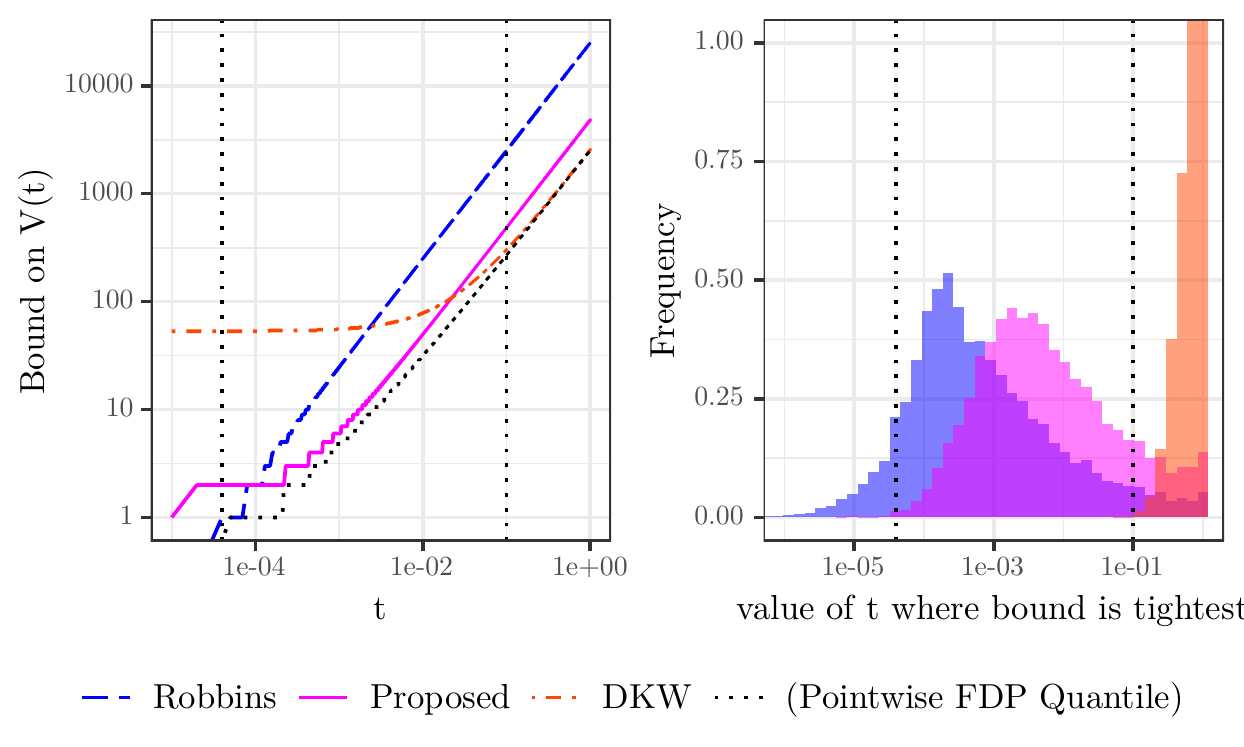}
	\caption{\small Comparing proposed bounds to two other explicit finite-sample bounds in the sorted setting. Vertical dotted lines indicate Bonferroni level $\alpha/n$ and nominal level $\alpha$. Left panel: The three FDP bounds; the proposed bound is tighter than the other two across most of the interesting range. Right panel: histograms of $t$ where the bound~\eqref{BH_path} is tightest.} 
	\label{fig:spotting_comparison}
\end{figure}

The left panel of Figure~\ref{fig:spotting_comparison} compares the three bounds (floor functions included), with dotted vertical lines indicating the Bonferroni level and the nominal level, respectively. The interval between these two levels is often the most interesting for multiple testing purposes, and the proposed bound is the tightest over most of this range (in particular, it is tighter than the Robbins bound as long as $\overline V_{\text{Robbins}}(t) \geq 2.4$). In fact, the proposed bound is not too far from the pointwise $1-\alpha$ quantile of $V(t)$, which is plotted for reference in black in the left panel. The right panel of Figure \ref{fig:spotting_comparison} shows a histogram of the value of $t$ at which the bound \eqref{BH_path} is tightest. We see that the majority of the time (about 87\%), our bound is tightest in the interesting range.

\subsubsection{Comparison to GS bound} \label{sec:simulations_BH_GS}

As discussed in the introduction, the GS bound is based on a suite of local tests $\{\phi_\cR\}_{\cR \in 2^{\H}}$. Therefore, different bounds can be obtained for different local tests. Here, we compare the proposed bound to the GS bound based on the Simes and Fisher local tests. We note that the Simes local test rejects if and only if the Robbins bound is nontrivial for any $t$. In fact, the GS-Simes bound is the closure of the Robbins bound and therefore dominates it \citep{GHS19}, so we remove the latter from consideration in this section.

\begin{figure}[h!]
	\centering
	\includegraphics[width = \textwidth]{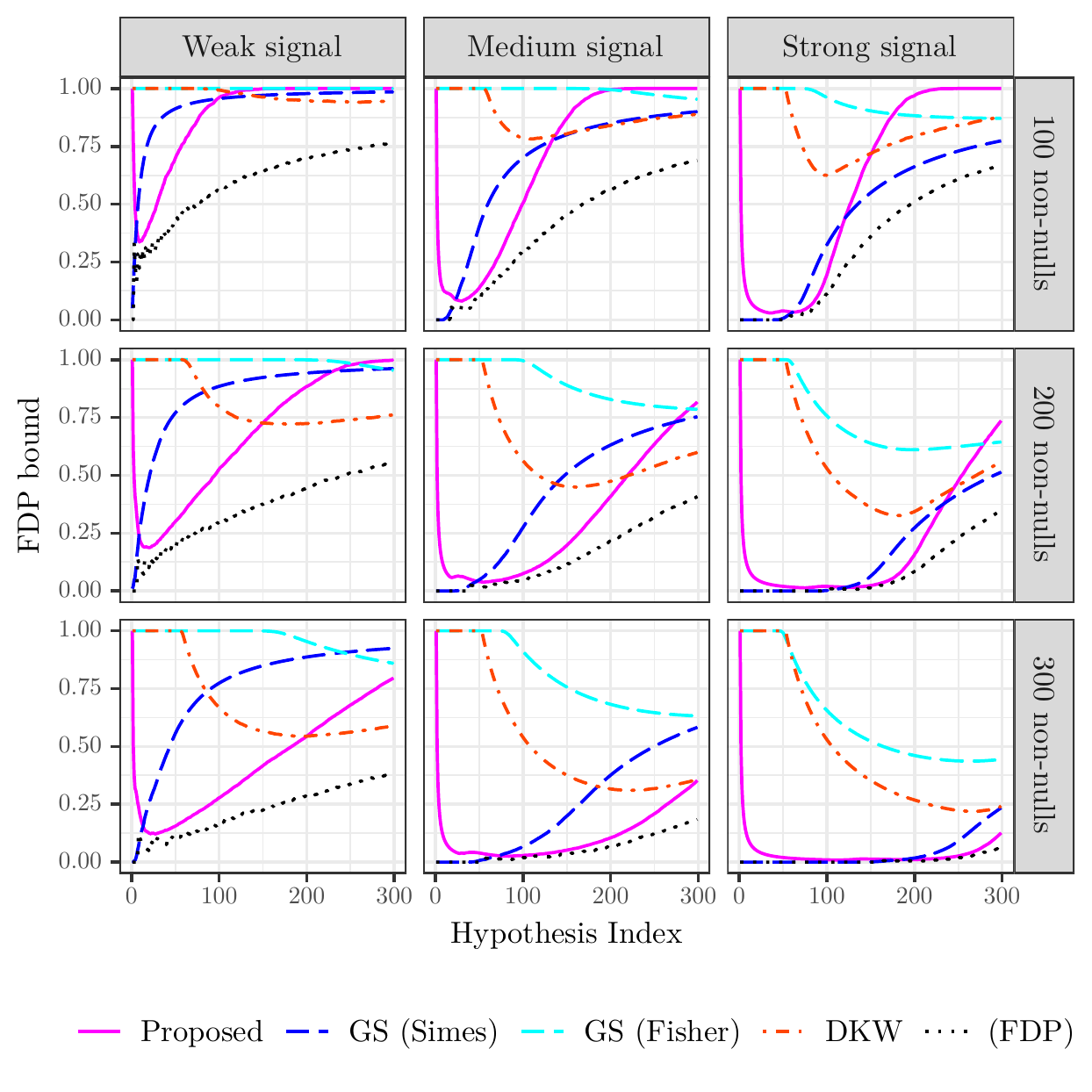}
	\caption{\small Comparing the proposed FDP bound with the GS bound (based on Simes or Fisher local tests) and the DKW bound in the sorted setting. The $1-\alpha$ quantile of the true FDP is also shown. The panels correspond to the three signal strengths and numbers of non-nulls. The proposed bounds are tightest in an intermediate range of rejection set sizes.}
	\label{fig:BH_simulation} 
\end{figure}

Since the GS bound is not explicit, we must make the comparison by inspecting the average shape of $\fdpbar$ on simulated data. We simulate independent test statistics $X_j \sim N(\mu_j, 1)$, where $\mu_j = \mu I(j \in \mathcal H_1)$ for some signal strength $\mu > 0$ and set of non-nulls $\mathcal H_1$. We then compute one-side p-values $p_j = 1 - \Phi(X_j)$. To cover a broad range of data-generating distributions, we consider the values $\mu = 2,3,4$ (weak, medium, and strong signal) and $|\mathcal H_1| = 100, 200, 300$.

Figure \ref{fig:BH_simulation} shows the average $\fdpbar$ curves (over 100 repetitions) in each of the nine simulation scenarios for the proposed and GS bounds, as well as the DKW bound introduced before. For reference, the $1-\alpha$ quantile of the true FDP is also shown. We see that the GS bounds inherit the properties of their underlying local tests. The GS-Simes bound behaves like the Robbins bound: it is tightest for small rejection set sizes, yielding highly nontrivial bounds near the beginning of the path for most simulation scenarios. The GS-Fisher bound behaves the opposite way: it is tightest for large rejection set sizes, even more so than the DKW bound. Neither the GS-Fisher bound nor the DKW bound yield very informative bounds in most of the simulation settings considered. Finally, the proposed bound is an intermediate between these two extremes, yielding the tightest estimates for intermediate rejection set sizes. For the simulation settings considered, the proposed bounds are tightest in interesting regions of the path: where many rejections are made but the FDP bound is still fairly low (e.g. below 0.2). 

\subsubsection{Coverage properties of FDP estimate} \label{sec:BH_simulation_overshoot}

The estimate $\fdphat_{\text{sort}}(t) = m \cdot t/|\cR_t|$ from equation~\eqref{FDP_hat_BH} and the related q-value, both proposed by \cite{Storey04}, have been shown to have asymptotic uniform coverage properties. In particular, their Theorem 6 states that for all $\delta > 0$,
\begin{equation}
\lim_{n \rightarrow \infty} \inf_{t \geq \delta}\left\{\fdphat(t) - \fdp(t)\right\} \geq 0 \quad \text{with probability 1}.
\label{Storey_bound}
\end{equation}
At first glance, this result might suggest that there is no reason to use conservative bounds for the FDP, if asymptotically, the much smaller point estimate bounds the FDP across the entire path. However, such a conclusion is misleading. Note that the infimum in the bound \eqref{Storey_bound} excludes $t \in [0, \delta)$, so for the bound to be interesting the value of $\delta$ must be small. Furthermore, the convergence becomes slower as $\delta \rightarrow 0$, so in finite samples the FDP estimate might undershoot the true FDP at some points along the path. As a counterpoint to \eqref{Storey_bound}, consider the following two results, holding as long as the null p-values are uniform and independent, and $\frac{|\mathcal H_0|}{n} \geq \epsilon > 0$:
\begin{equation*}
\EE{\sup_{t \in [0,1]}\frac{\fdp(t)}{\fdphat(t)}} = \infty; \quad \limsup_{n \rightarrow \infty}\ \sup_{t \in [\frac{c}{n},1]}\frac{\fdp(t)}{\fdphat(t)} = \infty \ \text{almost surely}.
\end{equation*}
The first result is due to \cite{robbins1954one} and holds for any finite $n$, and the second is due to \cite{Wellner1978} and holds for any fixed $c \geq 0$. Therefore, $\fdphat$ can underestimate $\fdp$ by large factors if we remove the restriction on $t$ or lower-bound it from below by an $O(1/n)$ term. 

We investigate via simulation whether applying BH at various target levels without correction for exploration has any consequences in finite samples. In the simulation setting from the previous section with $\mu = 4$ and $|\mathcal H_1| = 100$, consider applying BH using nominal levels $q \in \mathcal Q$ for some set $\mathcal Q \subseteq [0,1]$. If $\fdp_{\text{BH}}(q)$ denotes the realized FDP of running BH at level $q$, then the quantity
\begin{equation*}
\max_{q \in \mathcal Q} \frac{\fdp_{\text{BH}}(q)}{q}
\end{equation*}
measures the maximum extent to which the realized FDP exceeds the nominal level. In Figure~\ref{fig:BH_simulation_overshoot}, we show the mean and upper 90\% quantile of the above quantity for $\mathcal Q = [q_{\min}, 1]$, with $q_{\min}$ taking a range values. The practitioner may be willing to restrict her attention to a smaller set of nominal levels chosen in advance, e.g. $\mathcal Q_0 = \{0.01,0.025,0.05,0.075,0.1,0.125, 0.15,0.175,0.2\}$. The individual points in the figure correspond to this choice. 

\begin{figure}[h!]
	\centering
	\includegraphics[width = 0.9\textwidth]{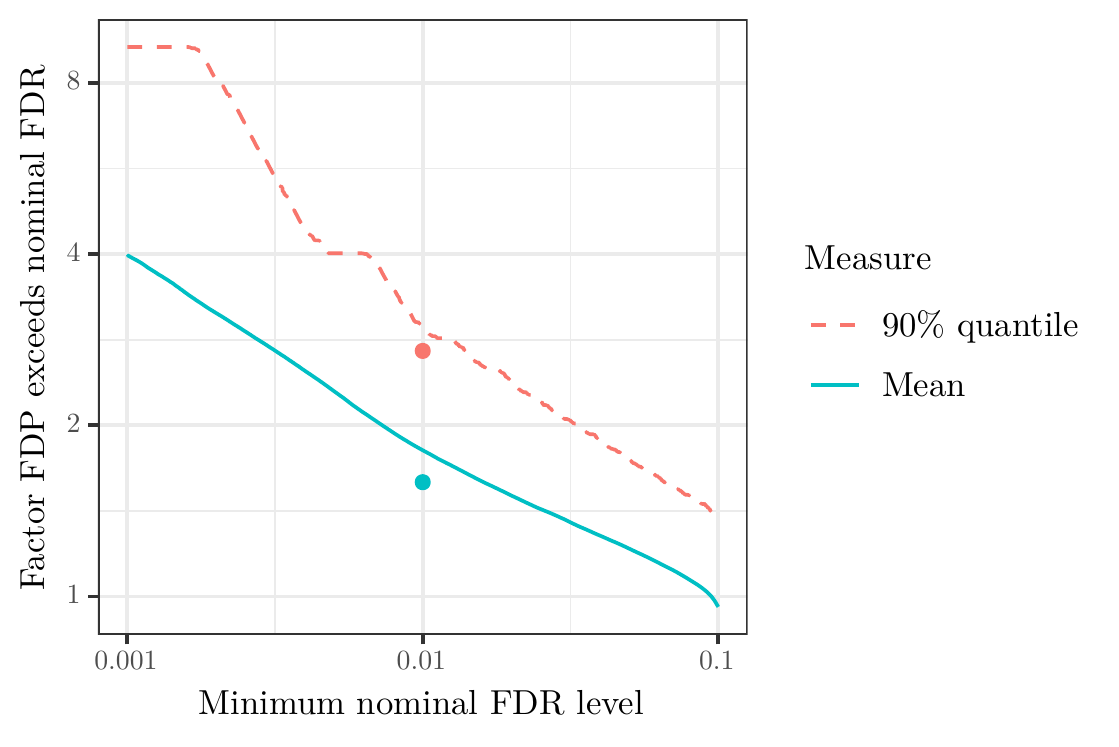}
	\caption{\small The extent to which the true FDP can exceed the nominal FDR level if BH is run for all nominal levels $q \in [q_{\min}, 1]$. Points correspond to running BH for the smaller prechosen set $\mathcal Q_0 = \{0.01,0.025,\dots, 0.2\}$ of FDR levels. We observe that some correction for exploration is necessary, whether the mean or the upper quantile of the FDP is of interest.}
	\label{fig:BH_simulation_overshoot}
\end{figure}	

Figure~\ref{fig:BH_simulation_overshoot} shows that the true FDP can significantly exceed the nominal level, a less comforting result than \eqref{Storey_bound}. Note that Theorem~\ref{BH_THEOREM} covers the case $q_{\min} = 0$ and gives a bound on the quantile of the FDP, so the corrections we introduce should be compared to the left-most point of the red dashed curve in Figure~\ref{fig:BH_simulation_overshoot}. On the other hand, this figure does show that the less we allow ourselves to explore, the smaller a price needs to be paid. We see this from the monotonically decreasing trend of the curves as a function of $q_{\min}$, and from the fact that the set $\mathcal Q_0 \subseteq [0.01,1]$ results in smaller factors than $[0.01,1]$. Deriving theoretical bounds for FDP under these kinds of limitations on exploration is an interesting direction for future work. Nevertheless, even when we restrict exploration in the above ways, we see it is dangerous to take the nominal FDR level at face value as suggested by statement \eqref{Storey_bound}. 

We note in passing that there has also been work on providing confidence envelopes such that the bound (\ref{BH_path}) holds asymptotically (for all $t$) as $n \rightarrow \infty$, e.g. by \cite{genovese2004stochastic} and \cite{meinshausen2006estimating}. However, we do not review these here for the sake of brevity.

\subsection{Pre-ordered setting} \label{sec:preordered_simulations}

Next, we consider the pre-ordered setting (fixing $\pi(j) = j$ without loss of generality). Here, we fix the number of non-nulls at $|\mathcal H_1| = 100$ and instead vary the degree to which the non-nulls tend to occur near the beginning of the ordering. We sample the non-nulls without replacement from $[n]$ according to a distribution with probability mass function proportional to the density of an exponential random variable with rate $\theta/n$. The greater $\theta$ is, the more informative the ordering is. We consider $\theta = 15, 35, 55$ (weak, medium, and strong ordering) and $\mu = 2,3,4$ (weak, medium, and strong signal, as before). Here, the DKW and Robbins bounds are not applicable, so we only compare to GS-Simes and GS-Fisher. We apply our bound based on $\vh_{\text{preorder-acc}}$, with accumulation function $h(p) = \frac{1}{1-\lambda}I(p > \lambda)$ with $\lambda$ = 0.1. We use the definition~\eqref{c_AS} of $\fdpbar_{\text{preorder-acc}}^B$.

Figure \ref{fig:ordered_simulation} shows the results. We see that the proposed bound effectively leverages the ordering information to obtain tighter FDP bounds than the GS-based methods. Predictably, the stronger the ordering information, the greater the advantage of our bound. Consistent with the previous simulation, GS-Simes outperforms GS-Fisher; the latter bound is nearly trivial for all simulation settings. Of course, an interesting direction of future work is to derive tighter GS-style bounds for settings with prior information.

\begin{figure}[h!]
	\centering
	\includegraphics[width = \textwidth]{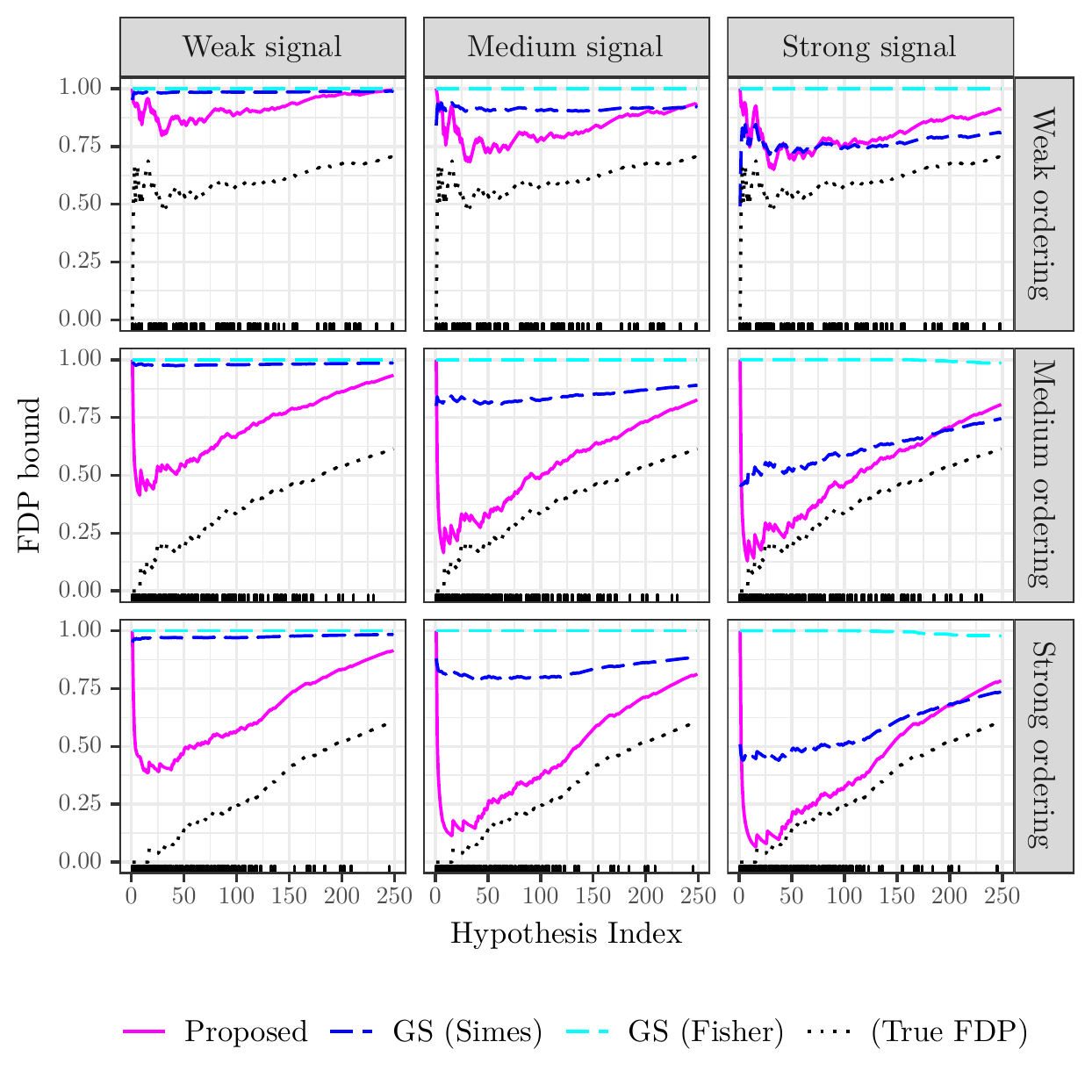}
	\caption{\small Comparing the proposed FDP bound with the GS bound based on Simes or Fisher local tests in the pre-ordered setting. The $1-\alpha$ quantile of the true FDP is also shown. The panels correspond to the three signal strengths and degrees to which non-nulls occur near the beginning of the ordering. Non-nulls are shown in the rug plots at the bottom of each panel. The proposed bounds leverage the ordering information to boost power.}
	\label{fig:ordered_simulation} 
\end{figure}

\subsection{The effect of correlation} \label{sec:simulations_correlation}

Finally, note that all our FDP bounds rely on some notion of independence among the p-values. Many of the FDR procedures considered here also only have guarantees under independence, though BH is a notable exception. Aside from online testing applications, independent p-values are hard to come by in practice, so more robust guarantees are necessary. BH is known to control FDR under the PRDS criterion \citep{BY01}, a form of positive dependence that contains no information about the \textit{strength} of the dependence. However, it is known that while the mean of FDP might not change much as dependence increases, the variance of the FDP will increase \citep{owen2005variance, EfronBook}. Hence, high-probability bounds on FDP under dependence are likely to use criteria other than PRDS to capture this dependence.

In this section, we use simulations to examine the extent to which our bounds continue to hold in the presence of p-value correlation. To model correlation, we draw the test statistics $X_j$ from an AR(1) process parameterized by correlation $\rho = -0.9, -0.8, \dots, 0.8, 0.9$. We consider four representative settings: the sorted setting from Theorem \ref{BH_THEOREM} and Section \ref{sec:sorted_simulations}, the pre-ordered setting with $p_* = 1$ from Theorem \ref{ORDERED_THEOREM} part 1 and Section \ref{sec:preordered_simulations}, the pre-ordered setting with $p_* = \lambda = 0.1$ from Theorem \ref{ORDERED_THEOREM} part 2, and the online setting with $\alpha_j = 0.05$ for all $j$ from Theorem \ref{ONLINE_THEOREM} part 1. For each setting and each value of $\rho$, we compute the $1-\alpha$ quantile of $\max_k \fdp(\cR_k)/\fdpbar(\cR_k)$, the maximum extent to which FDP can exceed our bound. We operate under the global null, since this is the worst case scenario.

\begin{figure}[h!]
	\centering
	\includegraphics[width = \textwidth]{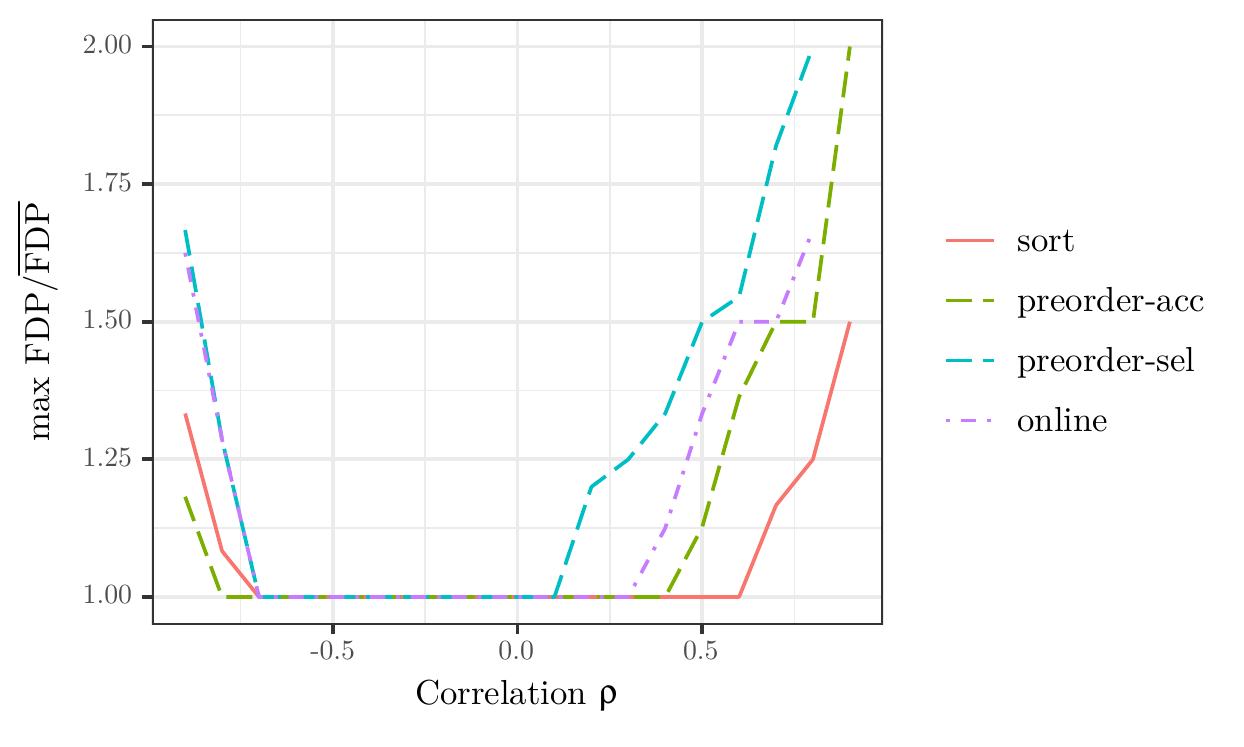}
	\caption{\small The extent to which FDP can exceed $\fdpbar$ for the proposed bounds under p-value correlation, generated from an AR(1) model parameterized by $\rho$. The bounds are more tolerant of negative than positive correlation.}
	\label{fig:correlation_simulation} 
\end{figure}

Figure \ref{fig:correlation_simulation} shows the simulation results. Reassuringly, all curves pass through 1 at $\rho = 0$, the independent case covered by our theorems. We see that different bounds have different tolerances for correlation, but negative correlation is tolerated better than positive correlation. All bounds continue to hold for $\rho \in [-0.7, 0.1]$. The bound in the sorted setting is particularly robust, continuing to be valid for $\rho \in [-0.7, 0.6]$. Nevertheless, it is not surprising that all the bounds are no longer valid once the correlation becomes strong enough. Indeed, under strong correlations the variability of the FDP necessitates more conservative bounds. We leave the extension of our results to the correlated setting for future work.

\section{Conclusion} \label{sec:conclusion}

In this paper, we establish a novel bridge between the realms of FDR control and simultaneous FDP control. While FDR procedures rely on estimates of the FDP to choose one rejection set from a path, we repurpose these estimates to obtain closed form simultaneous bounds on the FDP that are valid across the entire path with high probability. These novel bounds allow for the kind of simultaneous inference proposed by \cite{GS11mt}, where users can obtain FDP bounds on rejection sets they choose after exploring the data. They offer added versatility, applying in the structured, regression, and online settings; in Section~\ref{sec:simulations} we found that our bounds effectively leverage side information to boost power. 

Like any other simultaneous inference methodology, the bounds we provide must necessarily be conservative at certain points along the path. This reflects the fundamental trade-off between exploration and inference: allowing more flexibility to explore necessitates conservative corrections for inference to remain valid. In applications where analytical flexibility is important, however, this price may be worth paying. By augmenting the recent knockoffs analysis of the UK Biobank data set \citep{SetS19} with simultaneous FDP bounds, we saw in Section~\ref{sec:real_data} how much extra freedom we gained to find a biologically meaningful set of associations between genomic regions and human traits. While there might be a price in power as compared to the usual FDR analysis for other datasets, for this real-world dataset we still obtained meaningful FDP bounds for large rejection sets. Having said that, we do not necessarily advocate for employing simultaneous inference in all situations; indeed, FDR or FDX control at pre-specified levels may well be the right analysis choice in a variety of applications.

Figure~\ref{fig:BH_simulation_overshoot} illustrates the aforementioned trade-off between exploration and inference, and suggests that restricting the collection of rejection sets allowed to be explored can reduce the price paid for exploration. Studying this trade-off may lead to interesting future work. In addition to the fact that scientists may in practice explore fewer rejection sets than the guarantee covers, the rejection set they ultimately choose is likely not going to be the worst one in terms of FDP. Therefore, the worst-case bounds considered in this paper have this inherent degree of looseness. However, this looseness seems difficult or impossible to address theoretically. 

Recently, \cite{GHS19} explored the question of optimality among simultaneous inference procedures, proposing a natural admissibility criterion. In addition, these authors proved that only closed testing procedures, i.e. those of the kind proposed by \cite{GS11mt}, are admissible. Given any simultaneous inference procedure, like those proposed here, they showed how to improve the procedure by ``closing" it. From this perspective, our results can be viewed as building blocks from which to construct more sophisticated closed testing based procedures. It is not always the case that a closed testing procedure can be implemented in polynomial time, however, so it is still not clear which simultaneous bounds are dominated by other \textit{computationally efficient} bounds.

As pointed out to us by a referee, our bounds may also be used to construct new tests of the global null. Moreover, following \cite{meinshausen2006estimating}, our uniform bounds can also be used to estimate the null proportion among a set of hypotheses. Exploring these consequences of the proposed bounds is an interesting direction for future work. 

Finally, the proof technique we developed in this paper is versatile enough to cover a large portion of the currently available FDR procedures. Importantly, this includes the knockoffs procedure for variable selection in high dimensions. Like \cite{genovese2004stochastic}, we employ a stochastic process approach to analyze the FDP. However, while GW's bounds are asymptotic, we have used martingale arguments instead to obtain tight, non-asymptotic bounds. Perhaps these proof techniques may be extended further to apply to other multiple testing scenarios as well.

\section{Acknowledgements}

E.K. thanks Chiara Sabatti for her generous and valuable feedback on this work and on the manuscript itself, as well as David Siegmund, Emmanuel Candes, Anya Katsevich, and Michael Sklar for helpful discussions. A.R. acknowledges fruitful conversations with Ohad Feldheim, Jim Pitman and Jon Wellner about the BH empirical process. Both authors are also grateful for the insightful comments of the referees and AE. We acknowledge the UK Biobank Resource (application 27837) for the data used in Section~\ref{sec:real_data}, and Matteo Sesia for making public the summary statistics from the knockoffs analysis of this data \citep{SetS19}.

\begin{supplement}[id=suppA]
	\sname{Supplement}
	\stitle{}
	\slink[doi]{COMPLETED BY THE TYPESETTER}
	\sdatatype{.pdf}
	\sdescription{Proofs of all theorems.}
\end{supplement}

\bibliography{library}

\begin{thebibliography}{42}

\bibitem[\protect\citeauthoryear{Aharoni and
  Rosset}{2014}]{aharoni2014generalized}
\begin{barticle}[author]
\bauthor{\bsnm{Aharoni},~\bfnm{E}\binits{E.}} \AND
  \bauthor{\bsnm{Rosset},~\bfnm{S}\binits{S.}}
(\byear{2014}).
\btitle{{Generalized alpha-investing: definitions, optimality results and
  application to public databases}}.
\bjournal{Journal of the Royal Statistical Society: Series B (Statistical
  Methodology)}
\bvolume{76}
\bpages{771--794}.
\end{barticle}
\endbibitem

\bibitem[\protect\citeauthoryear{Anderson
  et~al.}{1995}]{anderson1995inequalities}
\begin{barticle}[author]
\bauthor{\bsnm{Anderson},~\bfnm{G~D}\binits{G.~D.}},
  \bauthor{\bsnm{Barnard},~\bfnm{R~W}\binits{R.~W.}},
  \bauthor{\bsnm{Richards},~\bfnm{K~C}\binits{K.~C.}},
  \bauthor{\bsnm{Vamanamurthy},~\bfnm{M~K}\binits{M.~K.}} \AND
  \bauthor{\bsnm{Vuorinen},~\bfnm{M}\binits{M.}}
(\byear{1995}).
\btitle{{Inequalities for zero-balanced hypergeometric functions}}.
\bjournal{Transactions of the American Mathematical Society}
\bpages{1713--1723}.
\end{barticle}
\endbibitem

\bibitem[\protect\citeauthoryear{Ashburner et~al.}{2000}]{AetE00}
\begin{barticle}[author]
\bauthor{\bsnm{Ashburner},~\bfnm{Michael}\binits{M.}},
  \bauthor{\bsnm{Ball},~\bfnm{Catherine~A}\binits{C.~A.}},
  \bauthor{\bsnm{Blake},~\bfnm{Judith~A}\binits{J.~A.}},
  \bauthor{\bsnm{Botstein},~\bfnm{David}\binits{D.}},
  \bauthor{\bsnm{Butler},~\bfnm{Heather}\binits{H.}},
  \bauthor{\bsnm{Cherry},~\bfnm{J~Michael}\binits{J.~M.}},
  \bauthor{\bsnm{Davis},~\bfnm{Allan~P}\binits{A.~P.}},
  \bauthor{\bsnm{Dolinski},~\bfnm{Kara}\binits{K.}},
  \bauthor{\bsnm{Dwight},~\bfnm{Selina~S}\binits{S.~S.}},
  \bauthor{\bsnm{Eppig},~\bfnm{Janan~T}\binits{J.~T.}} \AND
  \bauthor{\bsnm{Others}}
(\byear{2000}).
\btitle{{Gene Ontology: tool for the unification of biology}}.
\bjournal{Nature Genetics}
\bvolume{25}
\bpages{25}.
\end{barticle}
\endbibitem

\bibitem[\protect\citeauthoryear{Barber and Cand{\`{e}}s}{2015}]{BC15}
\begin{barticle}[author]
\bauthor{\bsnm{Barber},~\bfnm{Rina~Foygel}\binits{R.~F.}} \AND
  \bauthor{\bsnm{Cand{\`{e}}s},~\bfnm{Emmanuel~J}\binits{E.~J.}}
(\byear{2015}).
\btitle{{Controlling the false discovery rate via knockoffs}}.
\bjournal{The Annals of Statistics}
\bvolume{43}
\bpages{2055--2085}.
\end{barticle}
\endbibitem

\bibitem[\protect\citeauthoryear{Benjamini and Hochberg}{1995}]{BH95}
\begin{barticle}[author]
\bauthor{\bsnm{Benjamini},~\bfnm{Y}\binits{Y.}} \AND
  \bauthor{\bsnm{Hochberg},~\bfnm{Y}\binits{Y.}}
(\byear{1995}).
\btitle{{Controlling the false discovery rate: a practical and powerful
  approach to multiple testing}}.
\bjournal{Journal of the Royal Statistical Society: Series B (Statistical
  Methodology)}
\bvolume{57}
\bpages{289--300}.
\end{barticle}
\endbibitem

\bibitem[\protect\citeauthoryear{Benjamini and Liu}{1999}]{benjamini1999step}
\begin{barticle}[author]
\bauthor{\bsnm{Benjamini},~\bfnm{Yoav}\binits{Y.}} \AND
  \bauthor{\bsnm{Liu},~\bfnm{Wei}\binits{W.}}
(\byear{1999}).
\btitle{{A step-down multiple hypotheses testing procedure that controls the
  false discovery rate under independence}}.
\bjournal{Journal of Statistical Planning and Inference}
\bvolume{82}
\bpages{163--170}.
\end{barticle}
\endbibitem

\bibitem[\protect\citeauthoryear{Benjamini and Yekutieli}{2001}]{BY01}
\begin{barticle}[author]
\bauthor{\bsnm{Benjamini},~\bfnm{Y}\binits{Y.}} \AND
  \bauthor{\bsnm{Yekutieli},~\bfnm{D}\binits{D.}}
(\byear{2001}).
\btitle{{The control of the false discovery rate in multiple testing under
  dependency}}.
\bjournal{The Annals of Statistics}
\bvolume{29}
\bpages{1165--1188}.
\end{barticle}
\endbibitem

\bibitem[\protect\citeauthoryear{Blanchard, Neuvial and Roquain}{2017}]{BetR17}
\begin{barticle}[author]
\bauthor{\bsnm{Blanchard},~\bfnm{G}\binits{G.}},
  \bauthor{\bsnm{Neuvial},~\bfnm{P}\binits{P.}} \AND
  \bauthor{\bsnm{Roquain},~\bfnm{E}\binits{E.}}
(\byear{2017}).
\btitle{{Post hoc inference via joint family-wise error rate control}}.
\bjournal{arXiv}.
\end{barticle}
\endbibitem

\bibitem[\protect\citeauthoryear{Bycroft et~al.}{2018}]{BetJ18}
\begin{barticle}[author]
\bauthor{\bsnm{Bycroft},~\bfnm{Clare}\binits{C.}},
  \bauthor{\bsnm{Freeman},~\bfnm{Colin}\binits{C.}},
  \bauthor{\bsnm{Petkova},~\bfnm{Desislava}\binits{D.}},
  \bauthor{\bsnm{Band},~\bfnm{Gavin}\binits{G.}},
  \bauthor{\bsnm{Elliott},~\bfnm{Lloyd~T}\binits{L.~T.}},
  \bauthor{\bsnm{Sharp},~\bfnm{Kevin}\binits{K.}},
  \bauthor{\bsnm{Motyer},~\bfnm{Allan}\binits{A.}},
  \bauthor{\bsnm{Vukcevic},~\bfnm{Damjan}\binits{D.}},
  \bauthor{\bsnm{Delaneau},~\bfnm{Olivier}\binits{O.}},
  \bauthor{\bsnm{O'Connell},~\bfnm{Jared}\binits{J.}} \AND
  \bauthor{\bsnm{Others}}
(\byear{2018}).
\btitle{{The UK Biobank resource with deep phenotyping and genomic data}}.
\bjournal{Nature}
\bvolume{562}
\bpages{203}.
\end{barticle}
\endbibitem

\bibitem[\protect\citeauthoryear{Cand{\`{e}}s et~al.}{2018}]{CetL16}
\begin{barticle}[author]
\bauthor{\bsnm{Cand{\`{e}}s},~\bfnm{Emmanuel}\binits{E.}},
  \bauthor{\bsnm{Fan},~\bfnm{Yingying}\binits{Y.}},
  \bauthor{\bsnm{Janson},~\bfnm{Lucas}\binits{L.}} \AND
  \bauthor{\bsnm{Lv},~\bfnm{Jinchi}\binits{J.}}
(\byear{2018}).
\btitle{{Panning for gold: `model-X' knockoffs for high dimensional controlled
  variable selection}}.
\bjournal{Journal of the Royal Statistical Society: Series B (Statistical
  Methodology)}
\bvolume{80}
\bpages{551--577}.
\end{barticle}
\endbibitem

\bibitem[\protect\citeauthoryear{Dvoretzky, Kiefer and
  Wolfowitz}{1956}]{dvoretzky1956asymptotic}
\begin{barticle}[author]
\bauthor{\bsnm{Dvoretzky},~\bfnm{Aryeh}\binits{A.}},
  \bauthor{\bsnm{Kiefer},~\bfnm{Jack}\binits{J.}} \AND
  \bauthor{\bsnm{Wolfowitz},~\bfnm{Jacob}\binits{J.}}
(\byear{1956}).
\btitle{{Asymptotic minimax character of the sample distribution function and
  of the classical multinomial estimator}}.
\bjournal{The Annals of Mathematical Statistics}
\bpages{642--669}.
\end{barticle}
\endbibitem

\bibitem[\protect\citeauthoryear{Dwork, Su and Zhang}{2018}]{Dwork2018}
\begin{barticle}[author]
\bauthor{\bsnm{Dwork},~\bfnm{Cynthia}\binits{C.}},
  \bauthor{\bsnm{Su},~\bfnm{Weijie~J}\binits{W.~J.}} \AND
  \bauthor{\bsnm{Zhang},~\bfnm{Li}\binits{L.}}
(\byear{2018}).
\btitle{{Differentially Private False Discovery Rate Control}}.
\bjournal{arXiv}.
\end{barticle}
\endbibitem

\bibitem[\protect\citeauthoryear{Efron}{2010}]{EfronBook}
\begin{bbook}[author]
\bauthor{\bsnm{Efron},~\bfnm{B}\binits{B.}}
(\byear{2010}).
\btitle{{Large-scale inference}}.
\bseries{Institute of Mathematical Statistics (IMS) Monographs}
\bvolume{1}.
\bpublisher{Cambridge University Press, Cambridge}.
\end{bbook}
\endbibitem

\bibitem[\protect\citeauthoryear{Foster and Stine}{2008}]{FS08}
\begin{barticle}[author]
\bauthor{\bsnm{Foster},~\bfnm{D~P}\binits{D.~P.}} \AND
  \bauthor{\bsnm{Stine},~\bfnm{R~A}\binits{R.~A.}}
(\byear{2008}).
\btitle{alpha-investing: a procedure for sequential control of expected false
  discoveries}.
\bjournal{Journal of the Royal Statistical Society: Series B (Statistical
  Methodology)}
\bvolume{70}
\bpages{429--444}.
\end{barticle}
\endbibitem

\bibitem[\protect\citeauthoryear{Gavrilov, Benjamini and
  Sarkar}{2009}]{Gavrilov2009}
\begin{barticle}[author]
\bauthor{\bsnm{Gavrilov},~\bfnm{Yulia}\binits{Y.}},
  \bauthor{\bsnm{Benjamini},~\bfnm{Yoav}\binits{Y.}} \AND
  \bauthor{\bsnm{Sarkar},~\bfnm{Sanat~K.}\binits{S.~K.}}
(\byear{2009}).
\btitle{{An adaptive step-down procedure with proven FDR control under
  independence}}.
\bjournal{Annals of Statistics}
\bvolume{37}
\bpages{619--629}.
\bdoi{10.1214/07-AOS586}
\end{barticle}
\endbibitem

\bibitem[\protect\citeauthoryear{Genovese and
  Wasserman}{2004}]{genovese2004stochastic}
\begin{barticle}[author]
\bauthor{\bsnm{Genovese},~\bfnm{C}\binits{C.}} \AND
  \bauthor{\bsnm{Wasserman},~\bfnm{L}\binits{L.}}
(\byear{2004}).
\btitle{{A stochastic process approach to false discovery control}}.
\bjournal{The Annals of Statistics}
\bpages{1035--1061}.
\end{barticle}
\endbibitem

\bibitem[\protect\citeauthoryear{Genovese and
  Wasserman}{2006}]{genovese2006exceedance}
\begin{barticle}[author]
\bauthor{\bsnm{Genovese},~\bfnm{C~R}\binits{C.~R.}} \AND
  \bauthor{\bsnm{Wasserman},~\bfnm{L}\binits{L.}}
(\byear{2006}).
\btitle{{Exceedance control of the false discovery proportion}}.
\bjournal{Journal of the American Statistical Association}
\bvolume{101}
\bpages{1408--1417}.
\end{barticle}
\endbibitem

\bibitem[\protect\citeauthoryear{Goeman, Hemerik and Solari}{2019}]{GHS19}
\begin{barticle}[author]
\bauthor{\bsnm{Goeman},~\bfnm{Jelle}\binits{J.}},
  \bauthor{\bsnm{Hemerik},~\bfnm{Jesse}\binits{J.}} \AND
  \bauthor{\bsnm{Solari},~\bfnm{Aldo}\binits{A.}}
(\byear{2019}).
\btitle{{Only Closed Testing Procedures are Admissible for Controlling False
  Discovery Proportions}}.
\bjournal{arXiv}.
\end{barticle}
\endbibitem

\bibitem[\protect\citeauthoryear{Goeman and Solari}{2011}]{GS11mt}
\begin{barticle}[author]
\bauthor{\bsnm{Goeman},~\bfnm{J}\binits{J.}} \AND
  \bauthor{\bsnm{Solari},~\bfnm{A}\binits{A.}}
(\byear{2011}).
\btitle{{Multiple testing for exploratory research}}.
\bjournal{Statistical Science}
\bpages{584--597}.
\end{barticle}
\endbibitem

\bibitem[\protect\citeauthoryear{Goeman et~al.}{2016}]{goeman2016simultaneous}
\begin{barticle}[author]
\bauthor{\bsnm{Goeman},~\bfnm{J}\binits{J.}},
  \bauthor{\bsnm{Meijer},~\bfnm{R}\binits{R.}},
  \bauthor{\bsnm{Krebs},~\bfnm{T}\binits{T.}} \AND
  \bauthor{\bsnm{Solari},~\bfnm{A}\binits{A.}}
(\byear{2016}).
\btitle{{Simultaneous Control of All False Discovery Proportions in Large-Scale
  Multiple Hypothesis Testing}}.
\bjournal{arXiv}.
\end{barticle}
\endbibitem

\bibitem[\protect\citeauthoryear{G'Sell et~al.}{2016}]{g2013false}
\begin{barticle}[author]
\bauthor{\bsnm{G'Sell},~\bfnm{M~G}\binits{M.~G.}},
  \bauthor{\bsnm{Wager},~\bfnm{S}\binits{S.}},
  \bauthor{\bsnm{Chouldechova},~\bfnm{A}\binits{A.}} \AND
  \bauthor{\bsnm{Tibshirani},~\bfnm{R}\binits{R.}}
(\byear{2016}).
\btitle{{Sequential selection procedures and false discovery rate control}}.
\bjournal{Journal of the Royal Statistical Society: Series B (Statistical
  Methodology)}
\bvolume{78}
\bpages{423--444}.
\end{barticle}
\endbibitem

\bibitem[\protect\citeauthoryear{Hemerik, Solari and
  Goeman}{2019}]{Hemerik2019}
\begin{barticle}[author]
\bauthor{\bsnm{Hemerik},~\bfnm{J}\binits{J.}},
  \bauthor{\bsnm{Solari},~\bfnm{A}\binits{A.}} \AND
  \bauthor{\bsnm{Goeman},~\bfnm{J~J}\binits{J.~J.}}
(\byear{2019}).
\btitle{{Permutation-based simultaneous confidence bounds for the false
  discovery proportion}}.
\bjournal{Biometrika}
\bvolume{106}
\bpages{635--649}.
\bdoi{10.1093/biomet/asz021}
\end{barticle}
\endbibitem

\bibitem[\protect\citeauthoryear{Javanmard and Montanari}{2017}]{JM16}
\begin{barticle}[author]
\bauthor{\bsnm{Javanmard},~\bfnm{A}\binits{A.}} \AND
  \bauthor{\bsnm{Montanari},~\bfnm{A}\binits{A.}}
(\byear{2017}).
\btitle{{Online rules for control of false discovery rate and false discovery
  exceedance}}.
\bjournal{The Annals of Statistics}
\bvolume{46}.
\end{barticle}
\endbibitem

\bibitem[\protect\citeauthoryear{Katsevich and Sabatti}{2019}]{katsevich17mkf}
\begin{barticle}[author]
\bauthor{\bsnm{Katsevich},~\bfnm{Eugene}\binits{E.}} \AND
  \bauthor{\bsnm{Sabatti},~\bfnm{Chiara}\binits{C.}}
(\byear{2019}).
\btitle{{Multilayer knockoff filter: Controlled variable selection at multiple
  resolutions}}.
\bjournal{The Annals of Applied Statistics}
\bvolume{13}
\bpages{1--33}.
\end{barticle}
\endbibitem

\bibitem[\protect\citeauthoryear{Lehmann and
  Romano}{2005}]{lehmann2005generalizations}
\begin{barticle}[author]
\bauthor{\bsnm{Lehmann},~\bfnm{E~L}\binits{E.~L.}} \AND
  \bauthor{\bsnm{Romano},~\bfnm{J}\binits{J.}}
(\byear{2005}).
\btitle{{Generalizations of the familywise error rate}}.
\bjournal{The Annals of Statistics}
\bvolume{33}
\bpages{1138--1154}.
\end{barticle}
\endbibitem

\bibitem[\protect\citeauthoryear{Lei and Fithian}{2016}]{lei2016power}
\begin{binproceedings}[author]
\bauthor{\bsnm{Lei},~\bfnm{Lihua}\binits{L.}} \AND
  \bauthor{\bsnm{Fithian},~\bfnm{Will}\binits{W.}}
(\byear{2016}).
\btitle{{Power of ordered hypothesis testing}}.
In \bbooktitle{International Conference on Machine Learning}
\bpages{2924--2932}.
\end{binproceedings}
\endbibitem

\bibitem[\protect\citeauthoryear{Lei and Fithian}{2018}]{lei2016adapt}
\begin{barticle}[author]
\bauthor{\bsnm{Lei},~\bfnm{Lihua}\binits{L.}} \AND
  \bauthor{\bsnm{Fithian},~\bfnm{William}\binits{W.}}
(\byear{2018}).
\btitle{{AdaPT: an interactive procedure for multiple testing with side
  information}}.
\bjournal{Journal of the Royal Statistical Society: Series B (Statistical
  Methodology)}
\bvolume{80}
\bpages{649--679}.
\end{barticle}
\endbibitem

\bibitem[\protect\citeauthoryear{Lei, Ramdas and Fithian}{2017}]{lei17star}
\begin{barticle}[author]
\bauthor{\bsnm{Lei},~\bfnm{Lihua}\binits{L.}},
  \bauthor{\bsnm{Ramdas},~\bfnm{Aaditya}\binits{A.}} \AND
  \bauthor{\bsnm{Fithian},~\bfnm{William}\binits{W.}}
(\byear{2017}).
\btitle{{STAR: A general interactive framework for FDR control under structural
  constraints}}.
\bjournal{arXiv}.
\end{barticle}
\endbibitem

\bibitem[\protect\citeauthoryear{Li and Barber}{2017}]{LB17}
\begin{barticle}[author]
\bauthor{\bsnm{Li},~\bfnm{Ang}\binits{A.}} \AND
  \bauthor{\bsnm{Barber},~\bfnm{Rina~Foygel}\binits{R.~F.}}
(\byear{2017}).
\btitle{{Accumulation tests for FDR control in ordered hypothesis testing}}.
\bjournal{Journal of the American Statistical Association}
\bvolume{112}
\bpages{837--849}.
\end{barticle}
\endbibitem

\bibitem[\protect\citeauthoryear{McLean et~al.}{2010}]{MetB10}
\begin{barticle}[author]
\bauthor{\bsnm{McLean},~\bfnm{C~Y}\binits{C.~Y.}},
  \bauthor{\bsnm{Bristor},~\bfnm{D}\binits{D.}},
  \bauthor{\bsnm{Hiller},~\bfnm{M}\binits{M.}},
  \bauthor{\bsnm{Clarke},~\bfnm{S~L}\binits{S.~L.}},
  \bauthor{\bsnm{Schaar},~\bfnm{B~T}\binits{B.~T.}},
  \bauthor{\bsnm{Lowe},~\bfnm{C~B}\binits{C.~B.}},
  \bauthor{\bsnm{Wenger},~\bfnm{A~M}\binits{A.~M.}} \AND
  \bauthor{\bsnm{Bejerano},~\bfnm{G}\binits{G.}}
(\byear{2010}).
\btitle{{GREAT improves functional interpretation of cis-regulatory regions}}.
\bjournal{Nature Biotechnology}
\bvolume{28}
\bpages{495--501}.
\end{barticle}
\endbibitem

\bibitem[\protect\citeauthoryear{Meinshausen}{2006}]{meinshausen2006false}
\begin{barticle}[author]
\bauthor{\bsnm{Meinshausen},~\bfnm{N}\binits{N.}}
(\byear{2006}).
\btitle{{False discovery control for multiple tests of association under
  general dependence}}.
\bjournal{Scandinavian Journal of Statistics}
\bvolume{33}
\bpages{227--237}.
\end{barticle}
\endbibitem

\bibitem[\protect\citeauthoryear{Meinshausen and
  Rice}{2006}]{meinshausen2006estimating}
\begin{barticle}[author]
\bauthor{\bsnm{Meinshausen},~\bfnm{Nicolai}\binits{N.}} \AND
  \bauthor{\bsnm{Rice},~\bfnm{John}\binits{J.}}
(\byear{2006}).
\btitle{{Estimating the proportion of false null hypotheses among a large
  number of independently tested hypotheses}}.
\bjournal{The Annals of Statistics}
\bvolume{34}
\bpages{373--393}.
\end{barticle}
\endbibitem

\bibitem[\protect\citeauthoryear{Owen}{2005}]{owen2005variance}
\begin{barticle}[author]
\bauthor{\bsnm{Owen},~\bfnm{A~B}\binits{A.~B.}}
(\byear{2005}).
\btitle{{Variance of the number of false discoveries}}.
\bjournal{Journal of the Royal Statistical Society: Series B (Statistical
  Methodology)}
\bvolume{67}
\bpages{411--426}.
\end{barticle}
\endbibitem

\bibitem[\protect\citeauthoryear{Ramdas et~al.}{2017}]{RYWJ17}
\begin{binproceedings}[author]
\bauthor{\bsnm{Ramdas},~\bfnm{A}\binits{A.}},
  \bauthor{\bsnm{Yang},~\bfnm{F}\binits{F.}},
  \bauthor{\bsnm{Wainwright},~\bfnm{M~J}\binits{M.~J.}} \AND
  \bauthor{\bsnm{Jordan},~\bfnm{M~I}\binits{M.~I.}}
(\byear{2017}).
\btitle{{Online control of the false discovery rate with decaying memory}}.
In \bbooktitle{Advances In Neural Information Processing Systems}.
\end{binproceedings}
\endbibitem

\bibitem[\protect\citeauthoryear{Ramdas et~al.}{2018}]{SAFFRON}
\begin{binproceedings}[author]
\bauthor{\bsnm{Ramdas},~\bfnm{Aaditya}\binits{A.}},
  \bauthor{\bsnm{Zrnic},~\bfnm{Tijana}\binits{T.}},
  \bauthor{\bsnm{Wainwright},~\bfnm{Martin}\binits{M.}} \AND
  \bauthor{\bsnm{Jordan},~\bfnm{Michael}\binits{M.}}
(\byear{2018}).
\btitle{{SAFFRON: An adaptive algorithm for online control of the false
  discovery rate}}.
In \bbooktitle{Proceedings of the 35th International Conference on Machine
  Learning}
\bpages{4286--4294}.
\end{binproceedings}
\endbibitem

\bibitem[\protect\citeauthoryear{Robbins}{1954}]{robbins1954one}
\begin{binproceedings}[author]
\bauthor{\bsnm{Robbins},~\bfnm{H}\binits{H.}}
(\byear{1954}).
\btitle{{A one-sided confidence interval for an unknown distribution
  function}}.
In \bbooktitle{The Annals of Mathematical Statistics}
\bvolume{25}
\bpages{409}.
\end{binproceedings}
\endbibitem

\bibitem[\protect\citeauthoryear{Sesia, Sabatti and
  Cand{\`{e}}s}{2019}]{SetC17}
\begin{barticle}[author]
\bauthor{\bsnm{Sesia},~\bfnm{M.}\binits{M.}},
  \bauthor{\bsnm{Sabatti},~\bfnm{C.}\binits{C.}} \AND
  \bauthor{\bsnm{Cand{\`{e}}s},~\bfnm{E.~J.}\binits{E.~J.}}
(\byear{2019}).
\btitle{{Gene hunting with hidden Markov model knockoffs}}.
\bjournal{Biometrika}
\bvolume{106}
\bpages{1--18}.
\bdoi{10.1093/biomet/asy033}
\end{barticle}
\endbibitem

\bibitem[\protect\citeauthoryear{Sesia et~al.}{2019}]{SetS19}
\begin{barticle}[author]
\bauthor{\bsnm{Sesia},~\bfnm{M}\binits{M.}},
  \bauthor{\bsnm{Katsevich},~\bfnm{E}\binits{E.}},
  \bauthor{\bsnm{Bates},~\bfnm{S}\binits{S.}},
  \bauthor{\bsnm{Cand{\`{e}}s},~\bfnm{E}\binits{E.}} \AND
  \bauthor{\bsnm{Sabatti},~\bfnm{C}\binits{C.}}
(\byear{2019}).
\btitle{{Multi-resolution localization of causal variants across the genome}}.
\bjournal{bioRxiv}.
\end{barticle}
\endbibitem

\bibitem[\protect\citeauthoryear{Storey, Taylor and Siegmund}{2004}]{Storey04}
\begin{barticle}[author]
\bauthor{\bsnm{Storey},~\bfnm{J}\binits{J.}},
  \bauthor{\bsnm{Taylor},~\bfnm{J}\binits{J.}} \AND
  \bauthor{\bsnm{Siegmund},~\bfnm{D}\binits{D.}}
(\byear{2004}).
\btitle{{Strong control, conservative point estimation and simultaneous
  conservative consistency of false discovery rates: a unified approach}}.
\bjournal{Journal of the Royal Statistical Society: Series B (Statistical
  Methodology)}
\bvolume{66}
\bpages{187--205}.
\end{barticle}
\endbibitem

\bibitem[\protect\citeauthoryear{van~der Laan, Dudoit and
  Pollard}{2004}]{van2004multiple}
\begin{barticle}[author]
\bauthor{\bparticle{van~der} \bsnm{Laan},~\bfnm{M~J}\binits{M.~J.}},
  \bauthor{\bsnm{Dudoit},~\bfnm{S}\binits{S.}} \AND
  \bauthor{\bsnm{Pollard},~\bfnm{K~S}\binits{K.~S.}}
(\byear{2004}).
\btitle{{Multiple testing. Part III. Procedures for control of the generalized
  family-wise error rate and proportion of false positives}}.
\end{barticle}
\endbibitem

\bibitem[\protect\citeauthoryear{Ville}{1939}]{ville_etude_1939}
\begin{bbook}[author]
\bauthor{\bsnm{Ville},~\bfnm{J}\binits{J.}}
(\byear{1939}).
\btitle{{Etude critique de la notion de collectif}}.
\bpublisher{Gauthier-Villars}, \baddress{Paris}.
\end{bbook}
\endbibitem

\bibitem[\protect\citeauthoryear{Wellner}{1978}]{Wellner1978}
\begin{barticle}[author]
\bauthor{\bsnm{Wellner},~\bfnm{Jon}\binits{J.}}
(\byear{1978}).
\btitle{{Limit theorems for the ratio of the empirical distribution function to
  the true distribution function}}.
\bjournal{Zeitschrift f{\"{u}}r Wahrscheinlichkeitstheorie und verwandte
  Gebiete}
\bvolume{45}
\bpages{73--88}.
\end{barticle}
\endbibitem

\end{thebibliography}
\bibliographystyle{imsart-nameyear}

\newpage

\begin{center}
	\textbf{SUPPLEMENTARY MATERIALS}
\end{center}

\appendix

The method we employ to obtain all our FDP bounds is to investigate the properties of the stochastic process $\fdp(\cR_k)$. Below, we prove the proposed FDP bounds. We start with the sorted path construction, for which we must proceed differently since the hypotheses are ordered according to the p-values, while for all other settings, the hypothesis order is either pre-specified or is in some sense ``orthogonal'' to $\fdpbar$. 

\section{Proof of Theorem \ref{BH_THEOREM}} \label{app:BH}

Let $V_t \equiv \sum_{j \in \H_0} I(p_j \leq t)$, and let $V'_t$ be distributed as the unscaled empirical process of $n$ independent uniformly distributed random variables. Then, for any fixed $x > 1$ and $a = 1$ we have
\begin{equation}
\begin{split}
\PP{\sup_{1 \leq k \leq n} \frac{\fdp(\cR_k)}{\fdphat_a(\cR_k)} \geq x} &= \PP{\sup_{t \in [0,1]} \frac{V_t}{1+nt} \geq x} \\
&\leq \PP{\sup_{t \in [0,1]} \frac{V'_t}{1+nt} \geq x} \\
&= \PP{V'_t \geq x + xnt, \text{ for some } t \in [0,1]}.
\label{BH_stochastic_process}
\end{split}
\end{equation}
The second inequality holds because $V'_t$ is stochastically larger than $V_t$ because $n \geq |\mathcal H_0|$ and because the p-values are assumed to be stochastically larger than uniform. Hence, the quantity of interest is the probability that the stochastic process $V'_t$ hits the linear boundary $x+xnt$. This event is illustrated in Figure \ref{fig:poisson_process}.
\begin{figure}[h]
	\centering
	\includegraphics[width = 0.5\textwidth]{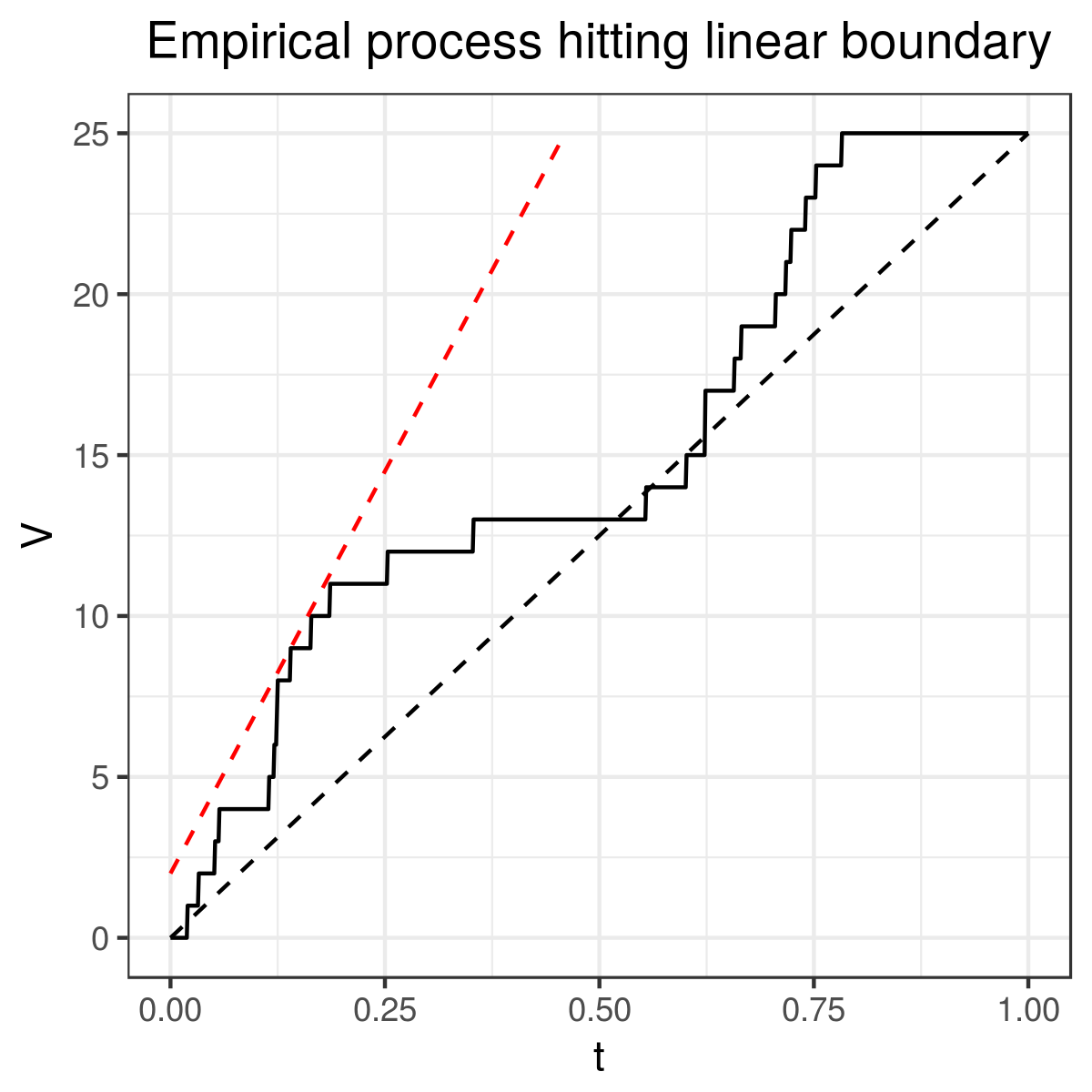}
	\caption{\small Stochastic process view of equation (\ref{BH_stochastic_process}), with the solid black line representing $V_t$, the dashed red line representing the linear boundary $x+xnt$, and the dashed black line being the diagonal that emphasizes our knowledge that $V(0)=0$ and $V(1)=n$. }
	\label{fig:poisson_process}
\end{figure}


However, it is somewhat difficult to obtain non-asymptotic bounds for probabilities that empirical processes like $V_t'$ hit certain boundaries. Instead, we claim that for $x \geq 1.5$, replacing $V_t'$ with a rate $n$ Poisson process $N_t$ only further increases the hitting probability:
\begin{lemma} \label{lemma:empirical_poisson}
	Let $V_t' \equiv \sum_{j = 1}^n I(p_j \leq t)$ for $p_j \overset{\text{i.i.d.}}{\sim} U[0,1]$. Let $N_t$ be a rate $n$ Poisson process. Then, for $x \geq 1.5$, 
	\begin{equation}
	\PP{V_t' \geq x + xnt, \text{ for some } t \in [0,1]} \leq \PP{N_t \geq x + xnt, \text{ for some } t \in [0,1]}.
	\label{empirical_poisson_inequality}
	\end{equation}
\end{lemma}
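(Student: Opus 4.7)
The plan is to exploit the classical Poissonization identity: conditional on $N_1 = n$, the rate-$n$ Poisson process $N_t$ restricted to $[0,1]$ has exactly the distribution of the empirical process $V_t'$ of $n$ i.i.d.\ Uniform$[0,1]$ random variables. Writing $A = \{N_t \geq x + xnt \text{ for some } t \in [0,1]\}$ for the Poisson crossing event, this coupling rewrites the left-hand side of~\eqref{empirical_poisson_inequality} as $\PP{A \mid N_1 = n}$, so the lemma is equivalent to $\PP{A \mid N_1 = n} \leq \PP{A}$, or by Bayes' rule, to $\PP{N_1 = n \mid A} \leq \PP{N_1 = n}$. Intuitively, on $A$ the process has just crossed an elevated linear boundary and is pushed toward larger terminal values, deflating the posterior probability that $N_1$ lands exactly at its mean $n$. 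The constraint $x \geq 1.5$ is the regime in which the boundary is steep enough for this heuristic to pay off.

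To make the reduction precise I would condition on the first-passage time $\tau \equiv \inf\{t : N_t \geq x + xnt\}$, so that $A = \{\tau \leq 1\}$, and use the strong Markov property of the Poisson process. On $A$ we have $N_\tau = \lceil x + xn\tau\rceil$, and $N_1 - N_\tau$ is conditionally $\textnormal{Poisson}(n(1-\tau))$, independent of $\mathcal{F}_\tau$. Letting $m \equiv N_\tau$, this yields the explicit identity
\[
\PP{N_1 = n,\, A} \;=\; \EE{\,I(\tau \leq 1)\cdot e^{-n(1-\tau)} \frac{(n(1-\tau))^{n-m}}{(n-m)!}\,I(m \leq n)\,},
\]
so the required inequality reduces to
\[
\EEst{e^{n\tau}\,\frac{(n(1-\tau))^{n-m}}{(n-m)!}\,I(m \leq n)}{A} \;\leq\; \frac{n^n}{n!}.
\]

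The main obstacle is that a pointwise bound of the integrand by $n^n/n!$ fails for some feasible $(\tau,m)$: on paths with small $\tau$ and $m = \lceil x\rceil$, the Poisson pmf at $n-m$ can exceed that at the Poisson mode $n$. The proof must therefore exploit the joint law of $(\tau,m)$ under $A$ and obtain cancellation in the averaging. I would handle this by writing the pmfs in exponential-tilting form via Stirling's approximation and by controlling the distribution of $(\tau, N_\tau)$ with an exponential supermartingale of the form $\exp(\theta N_t - nt(e^\theta - 1))$ (a Wald / Cram\'er--Lundberg style argument), choosing $\theta$ to reweight mass toward the $\{N_1 > n\}$ region. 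The threshold $x \geq 1.5$ should emerge as the precise boundary at which this tilting produces enough mass shift to offset the pointwise violations. The authors' Remark~\ref{BH_remark} noting numerical validity for all $\alpha$ further supports the view that $x \geq 1.5$ is an artifact of this specific bookkeeping rather than an intrinsic barrier.
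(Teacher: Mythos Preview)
Your reduction is exactly the one the paper uses: Poissonization gives $V'_t \overset d= (N_t)_{t\in[0,1]}\mid N_1=n$, so the lemma is equivalent to $\PP{N_1=n\mid \tau\leq 1}\leq \PP{N_1=n}$ for the first-passage time $\tau=\inf\{t:N_t\geq x(1+nt)\}$, and by the strong Markov property $\PP{N_1=n\mid \tau}=f\bigl(n(1-\tau),\,n-m\bigr)$ with $m=\lceil x(1+n\tau)\rceil$ and $f(\lambda,y)=e^{-\lambda}\lambda^y/\Gamma(y+1)$. Up to this point you and the paper agree completely.

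The gap is your assertion that the \emph{pointwise} inequality $f(n(1-\tau),n-m)\leq f(n,n)=\PP{N_1=n}$ fails for some feasible $(\tau,m)$, forcing an averaging argument. In fact the paper proves exactly this pointwise bound for every $\tau$ in the relevant range $[0,\,x^{-1}-n^{-1}]$, and this is where the condition $x\geq 1.5$ actually enters. The argument is pure calculus, not tilting: first, a monotonicity lemma ($\partial_y f(\lambda,y)\geq 0$ for $y\leq \lambda-1$, via the digamma bound $\psi(y+1)\leq\log\lambda$) lets one replace the integer $n-m$ by the real $n-x(1+n\tau)$, since $x\geq 1.5$ guarantees $n-x(1+n\tau)\leq n(1-\tau)-1.5$. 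Writing $g(\tau,x)=f(n(1-\tau),\,n-x(1+n\tau))$, one then shows (i) $g(0,x)=f(n,n-x)\leq f(n,n-1)=f(n,n)$ and (ii) $\partial_\tau \log g(\tau,x)\leq 0$ for $x\geq 1.5$, the latter by bounding $\psi$ above by $\log(\cdot+0.5)-1/(\cdot+0.5)$ and checking that the resulting upper bound $r(\tau,x)$ satisfies $r(\tau,1.5)\leq 0$, $\partial_x r(\tau,1.5)\leq 0$, and $r$ is concave in $x$.

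So your exponential-supermartingale workaround is aimed at a difficulty that is not there; the right move after your reduction is the direct analytic verification of the pointwise inequality via the Gamma/digamma machinery above.
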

See Section \ref{sec:proof_lemma_1} for the proof of this lemma. To understand why the lemma holds, note that $V_t' \overset{d}= N_t|N_1 = n$; i.e. the distribution of the empirical process is the same as that of the Poisson process, conditioned on observing exactly $n$ events at time $t = 1$. Hence, Lemma \ref{lemma:empirical_poisson} states that for sufficiently large $x$, a Poisson process is less likely to hit the line $t \mapsto x + xnt$ if we know that it is equal to its mean at time $t = 1$. This makes intuitive sense because the process needs to be far above its mean to hit this line for large $x$, which is less likely to happen if it must be equal to its mean at time $t = 1$.

Now, we may use the martingale properties of Poisson processes to bound the probability a Poisson process hits a linear boundary:
\begin{lemma}[\cite{katsevich17mkf}]
	If $N_t$ is a rate $n$ Poisson process, then for any $x > 1$, we have
	\begin{equation*}
	\PP{N_t \geq x + xnt \text{ for some } t \in [0,1]} \leq \exp(-x\theta_x),
	\end{equation*}
	where $\theta_x$ is the unique positive root of the equation (\ref{theta_eq}) in the main text.
\end{lemma}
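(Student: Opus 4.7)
The plan is to bound the hitting probability via an exponential (Wald-type) martingale for the Poisson process, followed by Ville's maximal inequality. For any $\theta > 0$, the standard moment-generating-function identity $\mathbb{E}[e^{\theta N_t}] = e^{nt(e^\theta - 1)}$ for a rate-$n$ Poisson process, combined with its independent-increments property, shows that
$$
M_t \equiv \exp\bigl(\theta N_t - nt(e^\theta - 1)\bigr)
$$
is a nonnegative martingale with respect to the natural filtration of $N_t$, satisfying $M_0 = 1$.

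The key step is to pick $\theta$ so that the linear hitting boundary $t \mapsto x + xnt$ translates into a constant lower bound on $M_t$. On the event that $N_t \geq x + xnt$ at some time $t \in [0,1]$, we have
$$
M_t \;\geq\; \exp\bigl(\theta(x + xnt) - nt(e^\theta - 1)\bigr) \;=\; \exp(\theta x)\cdot\exp\bigl(nt[\theta x - (e^\theta - 1)]\bigr).
$$
Setting $\theta = \theta_x$ as the positive root of $e^{\theta_x} = 1 + \theta_x x$ (equation~(\ref{theta_eq})) annihilates the coefficient of $nt$ in the exponent, leaving $M_t \geq \exp(\theta_x x)$ at the hitting time. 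Hence the hitting event is contained in $\{\sup_{t \in [0,1]} M_t \geq \exp(\theta_x x)\}$.

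Finally, applying Ville's maximal inequality to the nonnegative martingale $M_t$ (using $\mathbb{E}[M_0] = 1$) yields
$$
\PP{\sup_{t \in [0,1]} M_t \geq \exp(\theta_x x)} \;\leq\; \exp(-\theta_x x),
$$
which is exactly the claimed bound. The only routine preliminary is the well-definedness of $\theta_x$: for $x > 1$, the function $f(\theta) = e^\theta - 1 - \theta x$ satisfies $f(0) = 0$, $f'(0) = 1 - x < 0$, and $f(\theta) \to \infty$, so it has a unique positive root. The main conceptual step is the exact calibration of $\theta$ to the boundary slope $x$ so that the exponential martingale bound becomes $t$-independent on the hitting event; this is precisely what equation~(\ref{theta_eq}) encodes, and no other genuine obstacle arises.
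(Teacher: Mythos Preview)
Your proof is correct and is essentially the same approach the paper (and the cited reference \cite{katsevich17mkf}) uses: form the exponential Poisson martingale, calibrate $\theta$ via equation~\eqref{theta_eq} so that the hitting event forces $M_t \geq \exp(\theta_x x)$ uniformly in $t$, and conclude with Ville's maximal inequality. This is precisely the continuous-time Poisson analog of the paper's general argument in Lemma~\ref{main_lemma} (case~4), so there is nothing to add.
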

Therefore, 
\begin{equation*}
\PP{\sup_{1 \leq k \leq n}\frac{\fdp(\cR_k)}{\fdphat_a(\cR_k)} \geq } \leq \exp(-x\theta_x).
\end{equation*}		
To complete the derivation of the proof, suppose we choose $x$ such that $\alpha = \exp(-x\theta_x)$. Then, plugging this choice of $x$ into the definition of $\theta_x$ implies that
\begin{equation}
x = \frac{-\log \alpha}{\theta_x} = \frac{-\log \alpha}{\log(1 + \theta_x x)} = \frac{-\log \alpha}{\log(1 -\log \alpha)}.
\label{BH_multiplier_calculation}
\end{equation}
Hence, statement~\eqref{simultaneous_selective} holds for all $\alpha$ corresponding to $x \geq 1.5$, which translates to holding for all $\alpha \leq 0.31$.

\section{Proofs of Theorems \ref{ORDERED_THEOREM}, \ref{INTERACTIVE_THEOREM}, and \ref{ONLINE_THEOREM}} \label{app:234}

Next, we finish the proof of Lemma \ref{main_lemma} in the main text and derive Theorems \ref{ORDERED_THEOREM}, \ref{INTERACTIVE_THEOREM}, and \ref{ONLINE_THEOREM} as corollaries. 

\begin{proof}[Proof of Lemma \ref{main_lemma}]
	
	What remains to show is that $Z_k$ is a supermartingale. Note first of all that $Z_k$ is adapted to $\F_k$ by assumption (\ref{proof_filtration}). Hence, it suffices to show that 
	\begin{equation*}
	\EEst{\frac{Z_k}{Z_{k-1}}}{\F_{k-1}} = \EEst{\exp\left\{[\theta(I(p_{\pi(k)} \leq \alpha_k)-xh_k(p_{\pi(k)}))]I(\pi(k) \in H_0)\right\}}{\F_{k-1}} \leq 1.
	\end{equation*}
	Clearly, this inequality holds for any $k$ such that $\pi(k) \not \in \H_0$. For $k$ such that $\pi(k) \in \H_0$, we find that
	\begin{equation}\label{eq:pre-cases}
	\begin{split}
	&\EEst{\exp\left\{\theta(I(p_{\pi(k)} \leq \alpha_k)-xh_k(p_{\pi(k)}))\right\}}{\F_{k-1}} \\
	&\quad= \EEst{I(p_{\pi(k)} \leq \alpha_k)\exp\left\{\theta(1-xh_k(p_{\pi(k)}))\right\}}{\F_{k-1}} \\
	&\quad\quad+ \EEst{I(p_{\pi(k)} > \alpha_k)\exp\left\{-\theta xh_k(p_{\pi(k)}))\right\}}{\F_{k-1}}.
	\end{split}
	\end{equation}
	To show that the above quantity is at most one, we consider the four cases defined in the statement of the lemma.
	\paragraph{Case 1} 
	Since $\alpha_k = 1$, the second term of equation \eqref{eq:pre-cases} equals zero. Since $h_k = h$, $\pi(k)$ is fixed, and $p_{\pi(k)} \independent \F_{k-1}$, the first term simplifies to
	\begin{equation*}
	\begin{split}
	\EEst{\exp\left\{\theta_x(1-xh(p_{\pi(k)}))\right\}}{\F_{k-1}} 
	&~=~ \EE{\exp\left\{\theta_x(1-xh(p_{\pi(k)}))\right\}} \\
	&~\leq~ \exp(\theta_x)\Ep{U \sim U[0,1]}{\exp(-\theta_x x h(U))} \\
	&~=~ 1,
	\end{split}
	\end{equation*}
	The inequality holds because $p_{\pi(k)}$ is superuniformly distributed by assumption and $u \mapsto \exp(-\theta_x x h(u))$ is a nonincreasing function (since $h$ is nondecreasing by definition), and the last step holds because $\theta_x$ satisfies equation (\ref{theta_eq_acc}) by definition.
	
	\paragraph{Case 2} Again, the second term of equation (\ref{eq:pre-cases}) equals zero because $\alpha_k = 1$. We may bound the first term as:
	\begin{equation*}
	\begin{split}
	\EEst{\exp\left\{\theta_x(1-xh(p_{\pi(k)}))\right\}}{\F_{k-1}} 
	&~\leq~ \exp(\theta_x)\EEst{1 - \frac{1-\exp(-\theta_x x B)}{B}h(p_{\pi(k)})}{\F_{k-1}} \\
	&~\leq~ \exp(\theta_x)\left(1 - \frac{1-\exp(-\theta_x x B)}{B}\right) \\
	&~=~ 1.
	\end{split}
	\end{equation*}
	In the first line, we used the fact that $h_k(p_{\pi(k)}) = h(p_{\pi(k)}) \leq B$, and we bounded the convex function $z \mapsto \exp(-\theta_x x z)$ on $[0, B]$ with the line $z \mapsto 1-\frac{1-\exp(-\theta_x x B)}{B}z$. In the second step, we used the assumption $\EEst{h(p_{\pi(k)})}{\F_{k-1}} \geq \alpha_k = 1$, and in the third line we used the definition of $\theta_x$.
	
	\paragraph{Case 3} Because $h_k(p)I(p \leq \alpha_k) = 0$, the first term of equation (\ref{eq:pre-cases}) simplifies to
	\begin{equation*}
	\EEst{I(p_{\pi(k)} \leq \alpha_k)\exp\left\{\theta_x(1-xh_k(p_{\pi(k)}))\right\}}{\F_{k-1}} = \exp(\theta_x) \PPst{p_{\pi(k)} \leq \alpha_k}{\F_{k-1}}.
	\end{equation*}
	To bound the second term, we write
	\begin{equation*}
	\begin{split}
	&\EEst{I(p_{\pi(k)} > \alpha_k)\exp\left\{-\theta_x xh_k(p_{\pi(k)}))\right\}}{\F_{k-1}} \\ &\quad\leq \EEst{I(p_{\pi(k)} > \alpha_k)\left(1 - \frac{1-\exp(-\theta_x x B)}{B}h_k(p_{\pi(k)})\right)}{\F_{k-1}} \\
	&\quad= \PPst{p_{\pi(k)} > \alpha_k}{\F_{k-1}}  - \frac{1-\exp(-\theta_x x B)}{B}\EEst{h_k(p_{\pi(k)})I(p_{\pi(k)} > \alpha_k)}{\F_{k-1}} \\
	&\quad= \PPst{p_{\pi(k)} > \alpha_k}{\F_{k-1}}  - \frac{1-\exp(-\theta_x x B)}{B}\EEst{h_k(p_{\pi(k)})}{\F_{k-1}} \\
	&\quad\leq \PPst{p_{\pi(k)} > \alpha_k}{\F_{k-1}} - \frac{1-\exp(-\theta_x x B)}{B}\alpha_k.
	\end{split}
	\end{equation*}
	The inequality in the second line follows from the same convexity argument as in Case 2, the fourth line is a consequence of the fact that $h_k(p)I(p \leq \alpha_k) = 0$, and the last line follows from $\EEst{h_k(p_{\pi(k)})}{\F_{k-1}} \geq \alpha_k$.
	
	Combining the results of the previous two equations, we obtain that
	\begin{equation*}
	\begin{split}
	&\EEst{I(p_{\pi(k)} \leq \alpha_k)\exp\left\{\theta_x(1-xh_k(p_{\pi(k)}))\right\}}{\F_{k-1}} + \EEst{I(p_{\pi(k)} > \alpha_k)\exp\left\{-\theta_x xh_k(p_{\pi(k)}))\right\}}{\F_{k-1}} \\
	&\quad \leq  \exp(\theta_x) \PPst{p_{\pi(k)} \leq \alpha_k}{\F_{k-1}} + \PPst{p_{\pi(k)} > \alpha_k}{\F_{k-1}} - \frac{1-\exp(-\theta_x x B)}{B}\alpha_k \\
	&\quad =  (\exp(\theta_x)-1) \PPst{p_{\pi(k)} \leq \alpha_k}{\F_{k-1}} + 1 - \frac{1-\exp(-\theta_x x B)}{B}\alpha_k \\
	&\quad \leq  (\exp(\theta_x)-1) \alpha_k + 1 - \frac{1-\exp(-\theta_x x B)}{B}\alpha_k \\
	&\quad = \alpha_k\left(\exp(\theta_x)-1 - \frac{1-\exp(-\theta_x x B)}{B}\right) + 1 \\
	&\quad = 1.
	\end{split}
	\end{equation*}
	The inequality in the fourth line follows from the first part of assumption (\ref{p_h_requirement}), and the last equality follows from the definition of $\theta_x$.

	\paragraph{Case 4:}  Since $h_k(p) = \alpha_k$, equation \eqref{eq:pre-cases} simplifies to:
	\begin{equation*}
	\begin{split}
	&\exp(\theta_x(1-x\alpha_k))\PPst{p_{\pi(k)} \leq \alpha_k}{\F_{k-1}} + \exp(-\theta_x x \alpha_k)\PPst{p_{\pi(k)} > \alpha_k}{\F_{k-1}} \\
	&\quad = \exp(-\theta_x x\alpha_k)((\exp(\theta_x)-1)\PPst{p_{\pi(k)} \leq \alpha_k}{\F_{k-1}} + 1) \\
	&\quad \leq \exp(-\theta_x x\alpha_k)((\exp(\theta_x)-1)\alpha_k + 1).
	\end{split}
	\end{equation*}
	Noting that $\theta_x$ satisfies \eqref{theta_eq}, we see that the above expression can be bounded by one:
	\begin{equation*}
	\begin{split}
	\exp(-\theta_x x \alpha_k)\left(\theta_x x \alpha_k + 1\right) \leq 1,
	\end{split}
	\end{equation*}
	as desired, where the inequality follows because the function $z \mapsto e^{-z}(z+1)$ is decreasing and takes the value $1$ at $z = 0$.
\end{proof}

\begin{proof}[Proof of Theorem \ref{ORDERED_THEOREM}]
	In this case, the ordering $\pi$ is pre-specified, so we may assume without loss of generality that $\pi(j) = j$. First we note that the filtration
	\begin{equation}
	\F_k = \sigma(\H_0, \{p_j\}_{j \leq k, j \in \H_0})
	\label{ordered_filtration}
	\end{equation}
	satisfies the required condition (\ref{proof_filtration}). Next, we consider cases $\textnormal{preorder-acc}$ and $\textnormal{preorder-sel}$ separately.
	\begin{enumerate}
		\item $\textnormal{preorder-acc}$:
		For any $k \in \H_0$, since $\alpha_k = 1$, the first part of the requirement (\ref{p_h_requirement}) is trivially satisfied. To show the second part, we use the independence assumption in Theorem \ref{ORDERED_THEOREM} and the assumed superuniformity of null p-values to derive
		\begin{equation*}
		\EEst{h_k(p_k)}{\F_{k-1}} = \EE{h(p_k)} \geq \Ep{U \sim U[0,1]}{h(U)} = \int_0^1 h(u)du = 1 = \alpha_k.
		\end{equation*}
		The inequality follows because $p_k$ is superuniform and $h$ is nondecreasing.
		Hence, the assumptions of Lemma \ref{main_lemma}, case 1 are satisfied. Hence, the bound (\ref{general_FDP_bound}) holds with $\theta_x$ satisfying equation (\ref{theta_eq_acc}). In order to derive the constant~(\ref{acc_general_bound}), we simply note that for any $\alpha \in (0,1)$, we may choose $x$ such that $\exp(-a\theta_x x) = \alpha$, from which it follows that 
		\begin{equation*}
		x = \frac{-\log \alpha}{a\theta_x} = \frac{\log \alpha}{a\log \int_0^1 \exp(-\theta_x x h(u))du} = \frac{\log \alpha}{a\log \int_0^1 \alpha^{h(u)/a}du},
		\end{equation*}
		as desired.
		
		If the accumulation function is bounded by $B$, then the assumptions of Lemma \ref{main_lemma}, case 2 are also satisfied, so  the bound (\ref{general_FDP_bound}) holds with $\theta_x$ satisfying equation (\ref{theta_eq_B}). In order to derive the constant~(\ref{acc_seqstep}), we note that for any $\alpha \in (0,1)$, we may choose $x$ such that $\exp(-a\theta_x x) = \alpha$, from which it follows that 
		\begin{equation*}
		x = \frac{-\log \alpha}{a\theta_x} = \frac{\log \alpha}{a\log\left(1-\frac{1-\exp(-\theta_x x B)}{B}\right)} = \frac{\log \alpha}{a\log\left(1-\frac{1-\alpha^{B/a}}{B}\right)},
		\end{equation*}
		as desired. 
		
		\item $\textnormal{preorder-sel}$: We claim that the assumptions of Lemma \ref{main_lemma} are satisfied with $\F_{k}$ defined as in equation (\ref{ordered_filtration}). Indeed, fix $k \in \H_0$. Then, the assumed superuniformity of null p-values implies that
		\begin{equation*}
		\PPst{p_{k} \leq \alpha_k}{\F_{k-1}} = \PP{p_{k} \leq p_*} \leq p_* = \alpha_k
		\end{equation*}
		and
		\begin{equation*}
		\EEst{h_{k}(p_k)}{\F_{k-1}} = \EE{\frac{p_*}{1-\lambda} I(p > \lambda)} \geq p_* = \alpha_k.
		\end{equation*}
		Hence, by case 3 of Lemma \ref{main_lemma}, it follows that (\ref{general_FDP_bound}) holds for $\theta_x$ satisfying (\ref{theta_eq_AS}). For fixed $\alpha \in (0,1)$, define $x$ such that $\exp(-a\theta_x x) = \alpha$. Then, 
		\begin{equation*}
		x = \frac{-\log \alpha}{a\theta_x} = \frac{-\log \alpha}{a\log\left(1 + \frac{1-\exp(-\theta_x x B)}{B}\right)} = \frac{-\log \alpha}{a\log\left(1 + \frac{1-\alpha^{B/a}}{B}\right)},
		\end{equation*}
		which justifies the constant in equation~\eqref{c_AS}.
	\end{enumerate} 
\end{proof}

\begin{proof}[Proof of Theorem \ref{INTERACTIVE_THEOREM}]
	
	This proof is similar to that of Theorem \ref{ORDERED_THEOREM}, exception the filtrations must be somewhat more complicated to accommodate the interactivity of the path. For both the interact-acc and interact-sel path constructions, define the filtration
	\begin{equation*}
	\F_k = \sigma(\H_0, \{x_j, g(p_j)\}_{j \in [n]}, \{p_{\pi(j)}\}_{j \leq k}).
	\end{equation*}
	Since $\pi$ is predictable with respect to the filtration $\G_k$ (\ref{interactive_filtration}) by its definition and since $\F_k \supseteq G_k$, it follows that $\pi$ is also predictable with respect to $\F_k$. Hence, requirement (\ref{proof_filtration}) holds for $\F_k$ as defined above. Now, let $\pi(k) \in \H_0$. 
	
	For case $\text{interact-sel}$, the first part of condition (\ref{p_h_requirement}) holds because
	\begin{equation*}
	\begin{split}
	\PPst{p_{\pi(k)} \leq \alpha_k}{\F_{k-1}} &= \EEst{\PPst{p_{\pi(k)} \leq p_*}{g(p_{\pi(k)}), \pi(k)}}{\pi(k)} \\
	&\leq \text{Pr}_{U \sim U[0,1]}\{U \leq p_* \ | \ g(U)\} \\
	&= p_* \\
	&= \alpha_k.
	\end{split}
	\end{equation*}
	The first equality follows from the independence assumption on the p-values, the first inequality follows from the predictability of $\pi(k)$ and from the assumption that the null p-values have increasing densities (see the proof of Proposition 1 in \cite{lei17star}). Similarly,
	\begin{equation*}
	\begin{split}
	\EEst{h_k(p_{\pi(k)})}{\F_{k-1}} &= \EEst{\EEst{\frac{p_*}{1-\lambda}I(p_{\pi(k)} > \lambda)}{g(p_{\pi(k)}), \pi(k)}}{\pi(k)} \\
	&\geq \frac{p_*}{1-\lambda}\text{Pr}_{U \sim U[0,1]}\{U > \lambda \ | \ g(U)\} = p_* = \alpha_k.
	\end{split}
	\end{equation*}
	
	For case $\text{interact-acc}$, the first part of condition (\ref{p_h_requirement}) is trivially satisfied. To derive the second part, we write
	\begin{equation*}
	\begin{split}
	\EEst{h_k(p_{\pi(k)})}{\F_{k-1}} &= \EEst{\EEst{h(p_{\pi(k)})}{g(p_{\pi(k)}), \pi(k)}}{\pi(k)} \\
	&\geq \mathbb E_{U \sim U[0,1]}[h(U)|g(U)] \\
	&= 1 \\
	&= \alpha_k.
	\end{split}
	\end{equation*}
	The justification for this derivation is similar to that for ${\textnormal{interact-sel}}$, noting in addition that $ \mathbb E_{U \sim U[0,1]}[h(U)|g(U)] = 1$ by construction.
	
	Having established that the assumptions of Lemma \ref{main_lemma} are satisfied, the rest of the proof follows exactly as in the proof of Theorem \ref{ORDERED_THEOREM}.
\end{proof}

\begin{proof}[Proof of Theorem \ref{ONLINE_THEOREM}]
	
	Let $\G_k$ be the filtration defined in the statement of Theorem \ref{ONLINE_THEOREM}. Define
	\begin{equation}
	\F_k \equiv \sigma(\G_k, \H_0).
	\label{online_proof_filtration}
	\end{equation}
	Then, assumption (\ref{proof_filtration}) clearly holds for cases $\textnormal{online-simple}$ and $\textnormal{online-adaptive}$ by requirement~\eqref{online_predictable}. The first part of (\ref{p_h_requirement}) holds for both online procedures considered by assumption (\ref{online_assumption}). For case $\textnormal{online-simple}$, the second part holds because $h_k(p_k) = \alpha_k$. For case $\textnormal{online-adaptive}$, the second part holds because for $k \in \H_0$,
	\begin{equation*}
	\EEst{h_k(p_k)}{\F_{k-1}} = \EEst{\frac{\alpha_k}{1-\lambda_k}I(p_k > \lambda_k)}{\F_{k-1}} \geq \alpha_k,
	\end{equation*}
	where the last step follows from the predictability of $\alpha_k$ and $\lambda_k$ and the assumption (\ref{online_assumption}).
	
	Hence, $\textnormal{online-simple}$ and $\textnormal{online-adaptive}$ satisfy the assumptions of Lemma \ref{main_lemma}. The remainder of the proof corresponds exactly to analogous parts of the earlier proofs described for cases $\sort$ and $\textnormal{preorder-sel}$, respectively.
\end{proof}

\section{Lemmas supporting proof of Theorem \ref{BH_THEOREM}} \label{sec:proof_lemma_1}
\begin{proof}[Proof of Lemma \ref{lemma:empirical_poisson}]
	Let $\{N_t\}_{t \geq 0}$ be a Poisson process with rate $n$. It suffices to show that for $x \geq 1.5$, 
	\begin{equation}
	\mathbb P\left[\left.\sup_{t \in [0,1]}\frac{N_t}{1 + nt} \geq x\right| N_1 = n\right] \leq \mathbb P\left[\sup_{t \in [0,1]}\frac{N_t}{1 + nt} \geq x\right].
	\end{equation}
	Let us define
	\begin{equation*}
	\tau = \inf\left\{t: \frac{N_t}{1 + nt} \geq x\right\}.
	\end{equation*}
	We claim that it suffices to show that for $x \geq 1.5$,
	\begin{equation}
	\mathbb P[N_1 = n|\tau] \leq \mathbb P[N_1 = n] \quad \text{for all } \tau \geq 0.
	\label{sufficient}
	\end{equation}
	Indeed, it would then follow that
	\begin{equation*}
	\begin{split}
	\mathbb P\left[\left.\sup_{t \in [0,1]}\frac{N_t}{1 + nt} \geq x\right| N_1 = n\right] &= \mathbb P\left[\tau \leq 1| N_1 = n\right] \\
	&= \frac{\mathbb P[\tau \leq 1]\mathbb P[N_1 = n|\tau \leq 1]}{\mathbb P[N_1 = n]} \\
	&\leq \mathbb P[\tau \leq 1] \\
	&= \mathbb P\left[\sup_{t \in [0,1]}\frac{N_t}{1 + nt} \geq x\right].
	\end{split}
	\end{equation*}
	Note that for a given $x$, $x(1 + nt) > n$ for $t > \frac{1}{x} - \frac{1}{n}$. Hence, statement (\ref{sufficient}) is trivial for $\tau > \frac{1}{x} - \frac{1}{n}$, so we need only consider 
	\begin{equation}
	\tau \leq \frac{1}{x} - \frac{1}{n}.
	\label{tau_range}
	\end{equation}
	Define
	\begin{equation}
	f(\lambda, y) = e^{-\lambda}\frac{\lambda^y}{\Gamma(y+1)}.
	\label{def_f}
	\end{equation}
	This function $f: [0, \infty) \rightarrow \mathcal \R$ is equal to the probability mass function of the Poisson with parameter $\lambda$ when $y \in \{0, 1, 2, \dots\}$. We have 
	\begin{equation}
	\begin{split}
	\mathbb P[N_1 = n|\tau] &= \mathbb P[N_1 = n|N_\tau = \lceil x(1 + n\tau)\rceil] \\
	&= \mathbb P[N_{1-\tau} = n - \lceil x(1 + n\tau)\rceil] \\
	&= f(n(1-\tau), n - \lceil x(1 + n\tau)\rceil) \\
	&\leq f(n(1-\tau), n - x(1 + n\tau)) \\
	&\equiv g(\tau, x).
	\end{split}
	\label{def_g}
	\end{equation}
	The inequality follows by Lemma \ref{poisson_monotonicity} because $n - x(1+n\tau) \leq n(1-\tau) - x \leq n(1-\tau) - 1.5$. Define 
	\begin{equation}
	\begin{split}
	h(\tau, x) = \log g(\tau, x) &= \log \left(\exp(-n(1-\tau))\frac{(n(1-\tau))^{n - x(1+n\tau)}}{\Gamma(1 + n - x(1+n\tau))}\right) \\
	&= -n(1-\tau) + (n - x(1 + n\tau))\log(n(1-\tau)) - \log \Gamma(1 + n - x(1+n\tau)).
	\end{split}
	\end{equation}
	Note that
	\begin{equation}
	\begin{split}
	&\frac{\partial}{\partial \tau}h(\tau, x) \\
	&\quad= n -xn \log(n(1-\tau)) -\frac{n - x(1+n\tau)}{1 - \tau} + xn \psi(1 + n - x(1+n\tau)) \\
	&\quad\leq n -xn \log(n(1-\tau)) -\frac{n - x(1+n\tau)}{1 - \tau} + xn \left(\log(1.5 + n - x(1 + n\tau)) - \frac{1}{1.5 + n - x(1+n\tau)}\right) \\
	&\quad\equiv r(\tau, x).
	\end{split}
	\label{dh_bound}
	\end{equation}
	The inequality follows by Lemma \ref{lemma:digamma}.
	
	To prove the inequality (\ref{sufficient}), it suffices to show that 
	\begin{enumerate}
		\item[(a)] $g(0, x) \leq \mathbb P[N_1 = n]$ for all $x \geq 1.5$;
		\item[(b)] $r(\tau,1.5) \leq 0$ for each $\tau \leq \frac{1}{1.5} - \frac{1}{n}$;
		\item[(c)] $\frac{\partial}{\partial x}r(\tau,1.5) \leq 0$ for each $\tau \leq \frac{1}{1.5} - \frac{1}{n}$;
		\item[(d)] $r(\tau,x)$ is concave in $x$ for each $\tau$.
	\end{enumerate}
	Indeed, note that (c) and (d) imply that for each $\tau \leq \frac{1}{1.5} - \frac{1}{n}$, $r(\tau,x)$ is a decreasing function of $x$ for $x \geq 1.5$. Hence, for each $x \geq 1.5$ and each $\tau$, we have $r(\tau,x) \leq r(\tau,1.5) \leq 0$, where the last inequality follows from (b). Hence, by inequality (\ref{dh_bound}), $\frac{\partial h}{\partial \tau}(\tau, x) \leq r(\tau, x) \leq 0$ for each $x \geq 1.5$. This means that $h(\tau, x)$ is decreasing in $\tau$ for each $x \geq 1.5$, so $g(\tau, x)$ is decreasing in $\tau$ for each $x \geq 1.5$, from which it follows that $g(\tau, x) \leq g(0, x) \leq \mathbb P[N_1 = n]$, where the last inequality follows by (a).
	
	\paragraph{Proof of (a)} We have
	\begin{equation*}
	g(0, x) = f(n, n - x) \leq f(n, n - 1) = f(n,n) = \mathbb P[N_1 = n].
	\end{equation*}
	where the inequality follows by Lemma \ref{poisson_monotonicity} and the equality $f(n-1, n) = f(n,n)$ holds because
	\begin{equation*}
	\frac{f(n, n-1)}{f(n,n)} = \frac{\frac{n^{n-1}}{\Gamma(n)}}{\frac{n^n}{\Gamma(n+1)}} = \frac{\Gamma(n+1)}{n\Gamma(n)} = 1.
	\end{equation*}
	
	\paragraph{Proof of (b)} We have
	\begin{equation*}
	\begin{split}
	r(\tau, 1.5) &= n - 1.5n \log(n(1-\tau)) -\frac{n - 1.5(1+n\tau)}{1 - \tau} \\
	&\quad + 1.5n \left(\log(1.5 + n - 1.5(1 + n\tau)) - \frac{1}{1.5 + n - 1.5(1+n\tau)}\right) \\
	&= n - 1.5n \log(n(1-\tau)) -\frac{n - n\tau - 1.5 - 0.5 n\tau}{1 - \tau} + 1.5n \left(\log(n(1 - 1.5\tau)) - \frac{1}{n(1 - 1.5\tau)}\right) \\
	&= 1.5n \log\left(\frac{1 - 1.5\tau}{1-\tau}\right) +\frac{1.5 + 0.5 n\tau}{1 - \tau} -\frac{1.5}{1 - 1.5\tau} \\
	&= n\left(1.5\log\left(\frac{1 - 1.5\tau}{1-\tau}\right) + \frac{0.5\tau}{1 - \tau} \right) +1.5\left(\frac{1}{1-\tau} - \frac{1}{1 - 1.5\tau}\right) \\
	&\leq n\left(1.5\log\left(\frac{1 - 1.5\tau}{1-\tau}\right) + \frac{0.5\tau}{1 - \tau} \right)  \\
	&= n\left(1.5\log\left(1 - \frac{0.5\tau}{1 - \tau}\right) + \frac{0.5\tau}{1 - \tau}\right) \\
	&\leq n\left(-1.5 \frac{0.5\tau}{1 - \tau} + \frac{0.5\tau}{1 - \tau}\right) \\
	&= -n\frac{0.25\tau}{1 - \tau} \\
	&\leq 0.
	\end{split}
	\end{equation*}
	\paragraph{Proof of (c)}
	We have
	\begin{equation*}
	\begin{split}
	\frac{\partial r}{\partial x}(\tau, x) &= -n\log(n(1-\tau)) + \frac{1+n\tau}{1-\tau} + n\log(1.5 + n - x(1 + n\tau)) \\
	&\qquad -\frac{(1+n\tau)xn}{1.5 + n - x(1+n\tau)} -n\frac{1.5 + n - x(1+n\tau) + x(1+n\tau)}{(1.5 + n - x(1+n\tau))^2} \\
	&= -n\log(n(1-\tau)) + \frac{1+n\tau}{1-\tau} + n\log(1.5 + n - x(1 + n\tau)) \\
	&\qquad -\frac{(1+n\tau)xn}{1.5 + n - x(1+n\tau)} -n\frac{1.5 + n }{(1.5 + n - x(1+n\tau))^2}.
	\end{split}
	\end{equation*}
	Plugging in $x = 1.5$, we have
	\begin{equation*}
	\begin{split}
	\frac{\partial r}{\partial x}(\tau, 1.5) &= -n\log(n(1-\tau)) + \frac{1+n\tau}{1-\tau} + n\log(n(1-1.5\tau)) -\frac{1.5(1+n\tau)n}{n(1-1.5\tau)} -n\frac{1.5 + n }{(n(1-1.5\tau))^2} \\
	&= n\log\left(\frac{1-1.5\tau}{1-\tau}\right) + \frac{1}{1-\tau} + n\frac{\tau}{1-\tau} - \frac{1.5}{1 - 1.5\tau}(1+n\tau) - \frac{1}{n}\frac{1.5}{(1-1.5\tau)^2} - \frac{1.5}{(1-1.5\tau)^2} \\
	&\leq n\left(\log\left(\frac{1-1.5\tau}{1-\tau}\right) + \frac{\tau}{1-\tau} - \frac{1.5\tau}{1-1.5\tau}\right) + \left(\frac{1}{1-\tau} - \frac{1.5}{1 -1.5\tau}\right) \\
	&\leq n\left(\log\left(\frac{1-1.5\tau}{1-\tau}\right) + \frac{\tau}{1-\tau} - \frac{1.5\tau}{1-1.5\tau}\right) \\
	&\leq n\left(\frac{-0.5\tau}{1-\tau} + \frac{\tau}{1-\tau} - \frac{1.5\tau}{1-1.5\tau}\right). \\
	&= n\left(\frac{0.5\tau}{1-\tau} - \frac{1.5\tau}{1-1.5\tau}\right). \\
	&\leq n \frac{\tau(0.75\tau - 1)}{(1-\tau)(1 - 1.5\tau)} \\
	&\leq 0,
	\end{split}
	\end{equation*}
	where the last inequality follows because $\tau \leq \frac{1}{1.5} - \frac{1}{n} \leq 4/3$.
	
	\paragraph{Proof of (d)}
	
	Modulo terms linear in $x$ and the scaling factor $n$, $r(\tau, x)$ is equal to 
	\begin{equation*}
	x\log\left(1.5 + n - x(1+n\tau)\right) - \frac{x}{1.5 + n - x(1+n\tau)}.
	\end{equation*}
	We claim that the first term is concave in $x$ and the second term is convex, from which it will follow that their difference is concave. By linear transformations, the concavity of the first term will follow from the concavity of $x\log(1-x)$, which follows because its first derivative $\log(1-x) - \frac{x}{1-x}$ is decreasing in $x$. Again by linear transformations, the convexity of the second term will follow from the convexity of $\frac{x}{1-x} = -1 + \frac{1}{1-x}$ on $x < 1$, which is clear.
\end{proof}

\begin{lemma} \label{lemma:digamma}
	Let $\psi(x) = \Gamma'(x)/\Gamma(x)$ be the digamma function. Then, $\psi$ is increasing, and for $x \geq 1$, 
	\begin{equation}
	\psi(x) \leq \log(x) - \frac{1}{2x} \leq \log(x + 0.5) - \frac{1}{x + 0.5} \leq \log x.
	\end{equation}
\end{lemma}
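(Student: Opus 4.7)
The plan is to establish the four claims of the lemma in order: monotonicity of $\psi$ first, then the three inequalities from left to right.

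Monotonicity follows immediately from the Weierstrass series representation $\psi(x) = -\gamma + \sum_{n=0}^\infty\bigl(\frac{1}{n+1} - \frac{1}{n+x}\bigr)$, which differentiates term-by-term to give $\psi'(x) = \sum_{n=0}^\infty (n+x)^{-2} > 0$. For the first inequality $\psi(x) \leq \log x - \frac{1}{2x}$, I would invoke Binet's second integral formula,
\begin{equation*}
\psi(x) = \log x - \frac{1}{2x} - 2\int_0^\infty \frac{t\,dt}{(t^2+x^2)(e^{2\pi t}-1)},
\end{equation*}
valid for $x > 0$. Since the integrand is non-negative, the bound is immediate.

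The remaining two inequalities are elementary facts about real functions, and I would reduce both to one-variable inequalities via the substitution $u = 1/(2x)$, which lies in $(0, 1/2]$ when $x \geq 1$. For the middle inequality, rearranging yields $\log(x+0.5) - \log x \geq \frac{1}{x+0.5} - \frac{1}{2x}$; the left side equals $\log(1+u)$, and algebra collapses the right side to $\frac{u(1-u)}{1+u}$. Setting $g(u) \equiv \log(1+u) - \frac{u(1-u)}{1+u}$, a short computation gives $g'(u) = \frac{u(3+u)}{(1+u)^2} \geq 0$, and combined with $g(0)=0$ this yields $g \geq 0$ for all $u \geq 0$. For the rightmost inequality, rearrangement gives $\log(1+u) \leq \frac{1}{x+0.5} = \frac{2u}{1+u}$; setting $h(u) \equiv \frac{2u}{1+u} - \log(1+u)$, one has $h(0)=0$ and $h'(u) = \frac{1-u}{(1+u)^2} \geq 0$ on $[0,1]$, which covers the range of interest.

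There is no significant obstacle: each step is a short calculus exercise once the right reformulation is chosen. The only subtlety worth flagging is that the bound $\log(1+u) \leq \frac{2u}{1+u}$ does \emph{not} extend past $u=1$, so the hypothesis $x \geq 1$ is genuinely used to keep $u \leq 1/2$ for the rightmost inequality; and the first inequality really does require a non-trivial fact about $\psi$, namely Binet's formula (or an equivalent integral representation), rather than pure algebra.
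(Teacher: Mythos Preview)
Your proof is correct. The overall strategy matches the paper's, but the details differ in a couple of places worth noting. For the monotonicity of $\psi$ and the first inequality $\psi(x) \leq \log x - \tfrac{1}{2x}$, the paper simply appeals to ``well-known'' and to Theorem~3.1 of \cite{anderson1995inequalities}, whereas you supply self-contained arguments via the series expansion for $\psi'$ and Binet's second integral formula; your version is more explicit and avoids an external citation, at the cost of invoking a slightly heavier classical identity. For the two elementary inequalities, the paper bounds $\log(1+t)$ by its second- and first-order Taylor polynomials ($t - \tfrac12 t^2 \leq \log(1+t) \leq t$) and then reduces each to a simple algebraic equivalence in $x$, while you substitute $u = 1/(2x)$ and verify monotonicity of auxiliary functions $g,h$ directly. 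Both routes are short; the paper's Taylor approach makes the threshold values ($x \geq 1/6$ and $x \geq 1/2$, respectively) pop out explicitly, whereas your derivative computations show the inequalities hold on the full ranges $u \geq 0$ and $u \in [0,1]$ without isolating the sharp thresholds. Your remark that the hypothesis $x \geq 1$ is genuinely used only for the rightmost inequality is accurate and slightly sharper than what the paper states.
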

\begin{proof}
	The fact that $\psi$ is increasing is well-known. The first inequality follows directly from Theorem 3.1 of \cite{anderson1995inequalities}. To prove the second inequality, write 
	\begin{equation*}
	\log(x+0.5) - \log(x) = \log\left(1 + \frac{0.5}{x}\right) \geq \frac{0.5}{x}-\frac{1}{2}\left(\frac{0.5}{x}\right)^2,
	\end{equation*}
	and the conclusion follows because 
	\begin{equation*}
	\frac{0.5}{x}-\frac{1}{2}\left(\frac{0.5}{x}\right)^2 \geq \frac{1}{x + 0.5} - \frac{1}{2x} \  \Longleftrightarrow \ \frac{1}{2x} - \frac{1}{8x^2} \geq \frac{x - 0.5}{2x(x+0.5)} \ \Longleftrightarrow \ x \geq 1/6. 
	\end{equation*}
	To prove the third inequality, write
	\begin{equation*}
	\log(x+0.5) - \log(x) = \log\left(1 + \frac{0.5}{x}\right) \leq \frac{0.5}{x},
	\end{equation*}
	and the conclusion follows because
	\begin{equation*}
	\frac{0.5}{x} \leq \frac{1}{x+0.5} \ \Longleftrightarrow x \geq 0.5.
	\end{equation*}
\end{proof}

\begin{lemma} \label{poisson_monotonicity}
	Let $f$ be as defined in equation (\ref{def_f}). Then, 
	\begin{equation*}
	\frac{\partial}{\partial y}f(\lambda, y) \geq 0 \quad \textnormal{for all } y \leq \lambda - 1.
	\end{equation*}
\end{lemma}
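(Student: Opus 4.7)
The plan is to take logarithms and reduce the inequality to the digamma bounds already established in Lemma~\ref{lemma:digamma}. Since $f(\lambda, y) > 0$ for $\lambda > 0$ and $y > -1$, the sign of $\partial_y f$ agrees with the sign of $\partial_y \log f$. From
\[
\log f(\lambda, y) \;=\; -\lambda + y \log \lambda - \log \Gamma(y+1),
\]
differentiating in $y$ gives
\[
\frac{\partial}{\partial y}\log f(\lambda, y) \;=\; \log \lambda - \psi(y+1),
\]
where $\psi = \Gamma'/\Gamma$ is the digamma function. So the claim $\partial_y f(\lambda,y) \geq 0$ is equivalent to $\psi(y+1) \leq \log \lambda$.

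Assuming $y \leq \lambda - 1$, i.e., $y+1 \leq \lambda$, the monotonicity of $\psi$ stated in Lemma~\ref{lemma:digamma} gives $\psi(y+1) \leq \psi(\lambda)$. The third displayed inequality of Lemma~\ref{lemma:digamma}, namely $\psi(x) \leq \log x$ for $x \geq 1$, then yields $\psi(\lambda) \leq \log \lambda$, which is the inequality we need. This argument is valid whenever $\lambda \geq 1$, which is precisely the regime in which Lemma~\ref{poisson_monotonicity} is invoked in the proof of Lemma~\ref{lemma:empirical_poisson} (there $\lambda = n(1-\tau)$ with $n$ a large sample size and $\tau \leq 1/x - 1/n$, so $\lambda \geq x \geq 1.5$).

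There is essentially no obstacle: the heavy lifting has been done in Lemma~\ref{lemma:digamma}, and the present lemma is a one-step consequence after passing to $\log f$. The only thing to be careful about is remembering that monotonicity of $\psi$ lets us absorb all $y$ with $y+1 \leq \lambda$ into the single boundary case $y+1 = \lambda$, at which point the $\psi \leq \log$ bound closes the argument.
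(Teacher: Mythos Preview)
Your proof is correct and follows essentially the same route as the paper: both pass to $\log f$, compute $\partial_y \log f(\lambda,y) = \log\lambda - \psi(y+1)$, and then invoke the monotonicity of $\psi$ together with the bound $\psi(\lambda)\leq \log\lambda$ from Lemma~\ref{lemma:digamma}. Your additional remark that the argument needs $\lambda \geq 1$ (which holds in the application) is a useful clarification that the paper leaves implicit.
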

\begin{proof}
	To prove this, it suffices to show that for $y \leq \lambda - 1$, 
	\begin{equation*}
	0 \leq \frac{\partial}{\partial y}\log f(\lambda, y) = \frac{\partial}{\partial y}\left(-\lambda + y \log \lambda - \log \Gamma(y + 1)\right) = \log \lambda - \psi(y+1).
	\end{equation*}
	Indeed, by Lemma \ref{lemma:digamma}, for $y \leq \lambda - 1$ we have $\psi(y+1) \leq \psi(\lambda) \leq \log \lambda$.
\end{proof}

\end{document}